\documentclass[11pt]{amsart}

\usepackage{amssymb}
\usepackage{enumitem}
\usepackage[dvipsnames]{xcolor}
\usepackage[normalem]{ulem}

\usepackage{color}

\sloppy 

\usepackage[a4paper,top=3 cm,bottom=3 cm,left=2.5 cm,right=2.5 cm]{geometry}

\newtheorem{theorem}{Theorem}[section]
\newtheorem{lemma}{Lemma}[section]
\newtheorem{prop}{Proposition}[section]
\newtheorem{coro}{Corollary}[section]
\newtheorem{remark}{Remark}[section]
\newtheorem{defn}{Definition}[section]

\numberwithin{equation}{section}

\def\to{\rightarrow}























\renewcommand{\div}{\mathrm{div} \hspace{0.5mm}}     


\newcommand{\nchi}{{\raise.3ex\hbox{$\chi$}}}

\def\XXint#1#2#3{{\setbox0=\hbox{$#1{#2#3}{\int}$ }
		\vcen{\hbox{$#2#3$ }}\kern-.6\wd0}}


\newcommand{\cC}{\mathcal{C}}
\newcommand{\cD}{\mathcal{D}}
\newcommand{\cE}{\mathcal{E}}

\newcommand{\cM}{\mathcal{M}}

\renewcommand{\det}{{\rm det}}




\newcommand{\N}{\mathbb{N}}

\newcommand{\R}{\mathbb{R}}

\newcommand{\T}{\mathbb{T}}







\makeatletter
\newcommand{\justified}{%
	\rightskip\z@skip%
	\leftskip\z@skip}
\makeatother


\newcommand\restr[2]{{
		\left.\kern-\nulldelimiterspace 
		#1 
		\vphantom{\big|} 
		\right|_{#2} 
}}

\DeclareFontFamily{U}{mathx}{\hyphenchar\font45}
\DeclareFontShape{U}{mathx}{m}{n}{<-> mathx10}{}
\DeclareSymbolFont{mathx}{U}{mathx}{m}{n}
\DeclareMathAccent{\widebar}{0}{mathx}{"73}

\renewcommand{\i}{\ifmmode\mathit{\mathchar"7010 }\else\char"10 \fi}
\renewcommand{\j}{\ifmmode\mathit{\mathchar"7011 }\else\char"11 \fi}






\def\char{{1\!\mbox{\rm l}}}

\newcommand{\with}{\quad\!\hbox{with}\!\quad}
\newcommand{\andf}{\quad\!\hbox{and}\!\quad}

\allowdisplaybreaks

\title[Vlasov-Navier-Stokes equations]{Large-time asymptotics of periodic two-dimensional Vlasov-Navier-Stokes flows}

\author{Rapha\"el Danchin and Ling-Yun Shou}
\date{}

\linespread{1.2}


\begin{document}

\begin{abstract}
We study the large-time behavior of finite-energy weak solutions for the Vlasov-Navier-Stokes equations in a two-dimensional torus. We focus first on the homogeneous case where the ambient (incompressible and viscous) fluid carrying the particles has a constant density, and then on the variable-density case. In both cases, large-time convergence to a monokinetic final state is demonstrated. 
For any finite energy initial data, we exhibit 
an algebraic convergence rate that deteriorates as the initial particle distribution increases. When the initial particle distribution is suitably small, then the convergence rate becomes exponential, {a result consistent with} the work of Han-Kwan et al. \cite{han1} dedicated to the homogeneous, three-dimensional case, where an additional smallness condition on the velocity was required. 
In the non-homogeneous case, we establish similar stability results, allowing a piecewise constant fluid density with jumps.
\end{abstract}

\subjclass[2010]{35Q30, 35Q83, 76D05, 76D03}

\maketitle

\section{Introduction}

In this paper, we investigate the large-time behavior of global solutions to two types of Vlasov-Navier-Stokes equations that describe the motion of particles dispersed in a viscous 
incompressible fluid \cite{o1,williams1}, contained in a two-dimensional torus $\mathbb{T}^2$. 

If the fluid is homogeneous, that is, with constant density, 
then the governing equations are:
\begin{equation}\label{homoVNS}
    \left\{
    \begin{aligned}
            &f_{t}+v\cdot\nabla_x f+ {\rm{div}}_v( (u-v)f)=0\quad&\hbox{in }\ \R_+\times\mathbb{T}^2\times \mathbb{R}^2,\\
        & u_{t}+ u\cdot\nabla u+\nabla P=\Delta u-\int_{\mathbb{R}^2}  (u-v)f\,dv\quad&\hbox{in }\ \R_+\times\mathbb{T}^2,\\
        &{\rm{div}}\, u=0\quad&\hbox{in }\ \R_+\times\mathbb{T}^2. 
    \end{aligned}
    \right.
\end{equation}
Above, $u=(u^1,u^2)(t,x)$ and $P=P(t,x)$ denote the velocity and the pressure of the fluid, respectively, at  time $t\in\mathbb{R}_{+}$ and position $x=(x_{1},x_{2})\in \mathbb{T}^{2}$, and $f=f(t,x,v)\geq0$ is the distribution function of particles moving with the 
kinetic velocity $v=(v_{1},v_{2})\in \mathbb{R}^{2}$. The system \eqref{homoVNS} couples a Vlasov equation with the incompressible Navier–Stokes equations through the Brinkman force 
$$
-\int_{\mathbb{R}^2}  (u-v)f\,dv.
$$
The macroscopic density, momentum and energy of the particles, denoted by  $n_{f}$, $j_{f}$ and $e_{f},$  respectively, are defined from  the distribution function 
$f$  as follows:
\begin{equation}\label{macro}    \begin{aligned}
    &n_f:=\int_{\mathbb{R}^2}f\, dv,\quad j_{f}:=\int_{\mathbb{R}^2}vf  dv\andf e_{f}:=\frac{1}{2}\int_{\mathbb{R}^2}|v|^2f\,dv.    \end{aligned}\end{equation}
    At the formal level, solutions of  \eqref{homoVNS}  supplemented with the initial data
\begin{equation}\label{homod}(f,u)|_{t=0}=(f_0,u_0)\end{equation}
        satisfy the following mass  and  momentum  conservation laws:
\begin{align}
\int_{\mathbb{T}^2}n_{f}(t)\,dx=\cM_0:=\int_{\mathbb{T}^2}n_{f_0}\,dx,\qquad \int_{\mathbb{T}^2}( u +j_{f})(t)\, dx=\int_{\mathbb{T}^2}( u_0 +j_{f_0})\, dx,\label{mass}
\end{align}
as well as  the  energy balance: 
\begin{equation}\label{energy}
\cE(t)+\int_{0}^{t}\cD(\tau)\,d\tau= \cE(0),
\end{equation}
where  the kinetic energy $\cE$ and the dissipation rate $\cD$ are defined by
\begin{align}
&\cE:=\frac{1}{2}\int_{\mathbb{T}^2} |u|^2\,dx+\frac{1}{2}\int_{\mathbb{T}^2\times\mathbb{R}^2}|v|^2f\, dxdv,\label{enrg}\\
&\cD:=\int_{\mathbb{T}^2}|\nabla u|^2\,dx+\int_{\mathbb{T}^2\times\mathbb{R}^2}|u-v|^2f\, dxdv.\label{enrgD}
\end{align}
For smooth solutions (with enough decay at infinity if applicable), the above relations 
are still valid if the fluid domain is the whole space $\R^d$ with $d=2,3$ or the three-dimensional torus $\T^3.$
Based on these relations,  one can prove the global existence of finite energy 
weak solutions in dimensions two and three (see \cite{Anoshchenko,boudin3,Yu1}). 
Furthermore,  as  first shown in  \cite{hankwanunique}, 
 uniqueness holds true in dimension two. All this can be summed up in the following statement: 
\begin{theorem}
Let  the fluid domain  $\Omega$ be the torus $\T^d$ or the whole space $\R^d$ with $d=2,3.$  Assume that 
$$f_0\in L^\infty(\Omega\times\R^d)\!\!\with\!\!  |v|^2f_0\in L^1(\Omega\times\R^d),\andf u_0\in L^2(\Omega)\!\!\with\!\! \div u_0=0.$$
Then, \eqref{homoVNS} supplemented with \eqref{homod} admits  a global-in-time distributional solution verifying 
$${\cE(t)\!+\!\int_0^t \cD(\tau)\,d\tau \leq \cE_{0}:= \cE(0)},
\quad  t\in\R_+.$$
If $d=2$ and, in addition, $|v|^qf_0\in L^\infty(\Omega\times\R^2)$ for some $q>4,$ then  uniqueness is true. 
\end{theorem}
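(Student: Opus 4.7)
The plan is twofold: prove existence by a regularization scheme combined with a velocity-averaging compactness argument, and then establish 2D uniqueness via a weak-strong stability estimate in the spirit of \cite{hankwanunique}, where the propagation of a higher velocity moment plays the crucial role.

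For the existence part, I would first build an approximate sequence $(f^\varepsilon, u^\varepsilon)$ by mollifying in $x$ the fluid velocity that appears both in the Vlasov drift and in the Brinkman force, i.e.\ replacing $u$ by $u * \rho_\varepsilon$ in both couplings. For each fixed $\varepsilon > 0$, a fixed-point argument, combining the Leray theory for the Navier--Stokes equations with a smooth source term and the method of characteristics for the Vlasov equation with a smooth drift, produces a global regular solution. Since the smoothed drift satisfies $\mathrm{div}_v((u*\rho_\varepsilon)-v) = -d$, the characteristic formula yields $f^\varepsilon \geq 0$ together with $\|f^\varepsilon(t)\|_{L^\infty_{x,v}} \leq e^{dt}\|f_0\|_{L^\infty}$; moreover the conservation laws \eqref{mass} and the energy identity \eqref{energy} hold at level $\varepsilon > 0$, delivering uniform bounds $u^\varepsilon \in L^\infty_t L^2_x \cap L^2_t H^1_x$ and $\int |v|^2 f^\varepsilon\,dxdv \in L^\infty_t$.

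The main obstacle is to pass to the limit in the Brinkman force $\int(u^\varepsilon-v)f^\varepsilon\,dv = u^\varepsilon n_{f^\varepsilon} - j_{f^\varepsilon}$. Two ingredients will be combined. First, the interpolation
\[
n_{f^\varepsilon}(x) \leq C\,\|f^\varepsilon\|_{L^\infty}^{2/(d+2)} \Bigl(\int |v|^2 f^\varepsilon\,dv\Bigr)^{d/(d+2)},
\]
obtained by splitting $\{|v|\le R\}$ versus $\{|v|>R\}$ and optimizing $R$, together with the pointwise Cauchy--Schwarz bound $|u^\varepsilon n_{f^\varepsilon} - j_{f^\varepsilon}|^2 \leq n_{f^\varepsilon} \int |u^\varepsilon-v|^2 f^\varepsilon\,dv$ and the dissipation in \eqref{enrgD}, yield sufficient integrability on the force. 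Second, I would apply velocity averaging lemmas for the kinetic equation to obtain strong compactness of the moments $n_{f^\varepsilon}, j_{f^\varepsilon}$ in $L^1_{t,x}$, after a $v$-truncation justified by the tightness coming from the second-moment bound. Strong $L^2_{\mathrm{loc}}$ compactness of $u^\varepsilon$ follows from Aubin--Lions using the $H^1$ bound and a $W^{-s,2}$ bound on $\partial_t u^\varepsilon$ read off the Navier--Stokes equation. Passage to the limit in the distributional formulation then produces a weak solution, and the energy inequality is recovered by lower semicontinuity.

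For uniqueness in dimension two under the extra assumption $|v|^q f_0 \in L^\infty$ with $q>4$, I would first propagate the weighted bound $\|(1+|v|^q)f(t)\|_{L^\infty_{x,v}}$ along the Lagrangian flow, well-defined thanks to the 2D Navier--Stokes regularity (in particular an $L^1_t W^{1,\infty}_x$-type control of $u$, obtained by parabolic smoothing and $L^\infty$ control of the Brinkman source via the improved moment). The refined interpolation
\[
n_f(x) + |j_f(x)| \leq C\,\|(1+|v|^q)f\|_{L^\infty}^{\theta} \Bigl(\int |v|^2 f\,dv\Bigr)^{1-\theta}
\]
for some $\theta(q,d) \in (0,1)$ shows that $q>4$ is exactly the threshold ensuring $n_f, j_f \in L^\infty_{t,x}$. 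Given two solutions with identical data, I would compare them through the mixed quantity $\|u_1-u_2\|_{L^2_x}^2 + W_1(f_1,f_2)^2$, where $W_1$ is the $1$-Wasserstein distance between the kinetic distributions, and close a Gronwall inequality whose coefficient is integrable in time thanks to the previous $L^\infty$ bounds. The conclusion then follows from the vanishing of the initial discrepancy.
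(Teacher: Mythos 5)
The paper does not actually prove this theorem: it is stated as a summary of results quoted from the literature (existence from \cite{Anoshchenko,boudin3,Yu1}, two-dimensional uniqueness from \cite{hankwanunique}), and the only construction carried out in the paper (Subsection \ref{subsectionproofhomo}) regularizes the \emph{data} --- mollification in $x$ and $v$ plus a compact $v$-support cut-off --- and then solves the \emph{unmodified} system for each smooth datum, so that the exact conservation laws \eqref{mass} and the energy balance \eqref{energy} hold at every level of approximation. Your scheme instead regularizes the equation, which is legitimate, but as written it has one genuine flaw. If you replace $u$ by $u*\rho_\varepsilon$ only inside the Vlasov drift and inside the integrand of the Brinkman force, the two contributions to the energy balance no longer recombine: testing the momentum equation with $u$ produces $-\int u\cdot(u*\rho_\varepsilon - v) f$, the kinetic energy identity for the Vlasov equation produces $+\int v\cdot(u*\rho_\varepsilon - v) f$, and their sum is $-\int (u-v)\cdot(u*\rho_\varepsilon - v) f$, which is not sign-definite; the uncontrolled remainder $-\int (u-u*\rho_\varepsilon)\cdot(u*\rho_\varepsilon - v)f$ destroys the uniform $L^\infty_t(L^2_x)\cap L^2_t(H^1_x)$ and second-moment bounds on which your whole compactness argument rests. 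The standard repair is to convolve the entire Brinkman force once more with the same (even) mollifier, so that $\int u\cdot(\rho_\varepsilon*F)\,dx=\int (u*\rho_\varepsilon)\cdot F\,dx$ and the cross terms reassemble into $-\int|u*\rho_\varepsilon - v|^2f\leq0$. With that correction, the remaining chain (interpolation of the moments against the kinetic energy, velocity averaging, Aubin--Lions, lower semicontinuity for the energy inequality) is standard and correct.

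For uniqueness, your outline reproduces the strategy of \cite{hankwanunique}, which is precisely what the theorem relies on, but it leaves the genuinely hard step as an assertion: to close the Gronwall inequality for $W_1(f_1,f_2)$ one must compare the two characteristic flows, and this requires $\nabla u_i\in L^1_{\rm loc}(\R_+;L^\infty)$ (or an Osgood-type substitute), which does not follow merely from ``parabolic smoothing and $L^\infty$ control of the Brinkman source'': the data are only $L^2$ in $u_0$, so the instantaneous regularity gain degenerates as $t\to0$ and the time-integrability of $\|\nabla u(t)\|_{L^\infty}$ near $t=0$ has to be quantified --- this is the crux of the cited proof. A smaller point: in two dimensions the threshold $q>3$ already gives $n_f,j_f\in L^\infty$; the hypothesis $q>4$ is what is needed to bound $e_f$ and to propagate the weighted bound, so the claim that $q>4$ is ``exactly the threshold ensuring $n_f,j_f\in L^\infty$'' is not accurate.
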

 Understanding the large-time behavior of these weak solutions has been the subject of several recent papers. A point that makes this study particularly challenging is that the only nontrivial equilibria of the distribution function $f$ are singular in the sense that $f$ becomes {\emph{monokinetic}}: it is a Dirac measure with respect to the kinetic variable. Due to the Brinkman force, this causes some difficulties 
  when establishing uniform-in-time a priori estimates.

The large-time behavior of the solution strongly depends on the type of fluid domain that is considered. 
Typically, in the whole space situation, the velocity tends to $0$ with an algebraic {convergence} rate 
that is the same as for the heat equation (see \cite{danchinvns,HK3} for the $\R^3\times\R^3$ case), whereas for periodic boundary conditions and small solutions, the velocity  tends exponentially fast to
a constant state \cite{choiinhomo,han1}. 
The reader can also refer to  \cite{ErHKM1} for the bounded domains case and to \cite{Er2} for the half-space.

 Under the condition that the initial energy $\mathcal{E}_0$ and the $\dot{H}^{\frac{1}{2}}$-norm of the initial velocity are sufficiently small, Han-Kwan \emph{et al.} established in  \cite{han1} the uniform-in-time boundedness of $n_f,$ $ j_f,$  $e_f,$ and proved the exponential-in-time stability of global weak solutions  in three-dimensional periodic domains.
 In the two-dimensional case,  they  stated  exponential {convergence} estimates  for sufficiently small finite energy solutions. Very recently, Han-Kwan and Michel \cite{MR4813036} justified various hydrodynamic limits of the incompressible Vlasov–Navier–Stokes system in high friction regimes.

\medbreak
In some applications, it is relevant to take the variations of the density $\rho=\rho(t,x)\geq0$
of the ambient fluid.
Then, the motion of the particles and of the fluid is governed  by the 
following \emph{inhomogeneous} incompressible Vlasov-Navier-Stokes system
\begin{equation}\label{VNS}
    \left\{
    \begin{aligned}
            &f_{t}+v\cdot\nabla_x f+ {\rm{div}}_v( (u-v)f)=0&\hbox{in }\ \R_+\times\mathbb{T}^2\times \mathbb{R}^2,\\
        &\rho_{t}+{\rm{div}} (\rho u)=0&\hbox{in }\ \R_+\times\mathbb{T}^2,\\
        &(\rho u)_{t}+{\rm{div}} (\rho u\otimes u)+\nabla  P=\Delta u-\int_{\mathbb{R}^2}  (u-v)f\,dv&\hbox{in }\ \R_+\times\mathbb{T}^2,\\
        &{\rm{div}}\, u=0&\hbox{in }\ \R_+\times\mathbb{T}^2,\\        
    \end{aligned}
    \right.
\end{equation}
subject to the initial data
\begin{equation}\label{d}
\begin{aligned}
(f,\rho, u)|_{t=0}=(f_0,\rho_0, u_0).
\end{aligned}
\end{equation}
Yu and Wang \cite{wangd1} established the global existence of weak solutions to \eqref{VNS} with specular reflection boundary conditions. Choi \cite{choiinhomo} studied the long-time solvability of strong solutions in $\mathbb{T}^3\times\mathbb{R}^3$ or $\mathbb{R}^3\times\mathbb{R}^3$ and provided a conditional exponential {convergence} result in the periodic case. Li, Shou and Zhang \cite{LSZ1} studied the inhomogeneous Vlasov-Navier-Stokes equations in the presence of vacuum and proved the exponential stability in $\mathbb{R}^3\times\mathbb{R}^3$ when the initial energy is suitably small,  exhibiting  Lyapunov functionals 
and dissipation rates. The algebraic convergence of small solutions without vacuum in $\mathbb{R}^3\times\mathbb{R}^3$ has been established in \cite{SWYZ}.

\vspace{2mm}

 Our main goal is to get more accurate results for the large-time behavior of these solutions
\emph{without assuming that the initial energy is small}. 
 In fact, we shall show that general two-dimensional periodic weak solutions 
always have at least an algebraic {convergence} rate (depending on  the total mass of $f_0$) 
and that if $\|f_0\|_{L^\infty_{x,v}}$ is small enough, then the exponential convergence of solutions toward their equilibrium states holds true. 
In this latter case, we shall also specify the behaviors of the density function $n_f(t)$ of the particles and of the fluid density $\rho(t)$ when $t$ goes to infinity.
\medbreak\noindent{\bf Notation.} 
Throughout the paper $C$  denotes a `harmless' constant that may change from line to line, and 
$A\lesssim B$ means $A\leq CB.$ 
For functions $g$ depending on both $x\in\T^2$ and $v\in\R^2$, we shall sometimes use the 
following short notation for Lebesgue norms  (with $1\leq p,r\leq\infty$):
$$\|g\|_{L^p_{x,v}}:= \|g\|_{L^p(\T^2\times\R^2)}\andf \|g\|_{L^p_v(L^r_x)}:= \|g\|_{L^p(\R^2_v;(L^r(\T^2_x)))}.$$


\section{Main results}

We first present our results on the large-time behavior of the \emph{homogeneous} incompressible Vlasov-Navier-Stokes equations \eqref{homoVNS}, then their extension to the inhomogeneous setting.

\subsection{The homogeneous case}

  As first observed in  \cite{choiinhomo}
in the more general context of the inhomogeneous Vlasov-Navier-Stokes equations, 
in order to study the long-time asymptotics of solutions in the periodic case, it is wise
to use the following \emph{modulated energy} (or relative entropy) functional:
\begin{equation}\label{eq:modulated}
\mathcal{H}:=\frac{1}{2}\int_{{\mathbb{T}^2}}|u-\langle u\rangle |^2\,dx+\frac{1}{2}\int_{\mathbb{T}^2\times\mathbb{R}^2}\left|v-\frac{\langle j_{f}\rangle}{\langle n_{f}\rangle} \right|^2f\, dxdv+\frac{1}{2}\frac{\|n_{f}\|_{L^1}}{\langle n_{f}\rangle+1} \left|\langle u\rangle-\frac{\langle j_{f}\rangle}{\langle n_{f}\rangle}\right|^2,
\end{equation}
where $\langle \cdot\rangle:={\frac1{|\mathbb{T}^2|}}\int_{\mathbb{T}^2}\cdot\: dx$ denotes the average operator.

\medbreak
By \eqref{mass} and \eqref{energy}, one can deduce the  balance of modulated energy, namely
\begin{equation}\label{intenergym}
\mathcal{H}(t)+\int_0^t\cD(\tau)\,d\tau= \mathcal{H}_0:=\mathcal{H}(0).
\end{equation}
If $\mathcal H(t)$ converges to $0$ as $t\rightarrow\infty$, then 
Relations \eqref{mass} and the definition of $\mathcal H$  ensure that $u$ converges to the constant velocity
\begin{equation}
u_{\infty}:=\frac{\langle u_{0}+j_{f_{0}}\rangle}{1+\langle n_{f_0}\rangle}\cdotp\label{uinfty}
\end{equation}
Indeed, we observe that
\begin{equation}\label{uinftybis}
\langle u\rangle-u_\infty=\frac{\langle n_f\rangle}{1+\langle n_f\rangle}
\Biggl(\langle u\rangle-\frac{\langle j_f\rangle}{\langle n_f\rangle}\Biggr)\cdotp
\end{equation}
Our first result is the convergence to equilibrium for all finite energy solutions of \eqref{homoVNS}. 
\begin{theorem}\label{theorem1}
 Assume that $(f_0,u_0)$ satisfies \footnote{The last assumption is required  for uniqueness (see \cite{hankwanunique}). For existence, it suffices to suppose
 $|v|^2f_0\in L^1_{x,v}.$}
\begin{equation}\label{a1}
 u_0\in L^2, \quad\ \div u_0=0,\quad\ 
 0\leq f_0\in L^1_{x,v}\cap L^{\infty}_{x,v}\ \hbox{ and }\ \ |v|^{q}f_{0}\in  L^{\infty}_{x,v} \ \hbox{ for some }\  q>4.
\end{equation}
Then, Equations \eqref{homoVNS} supplemented with the initial data $(f_0,u_0)$ admit a unique global weak solution $(f,u,P)$ which satisfies for all $t\geq0,$
\begin{multline}\label{behavior1}
\mathcal{H}(t)+\frac{{\mathcal{M}_0}}{1+{\mathcal{M}_0}}\|u(t)-u_{\infty}\|_{L^2}^2+\| f(t)|v-u_{\infty}|^2\|_{L^1_{x,v}}\\
 \leq C\bigg( 1+\frac{\cM_0  t}{1+\mathcal{H}_0+\cM_0+\| f_{0}\log f_{0} \|_{L^1_{x,v}} } \bigg)^{-\frac{C}{\cM_0}}\mathcal{H}_0,
\end{multline}
where $C$ is  a positive constant depending only  on  $\T^2$.
\end{theorem}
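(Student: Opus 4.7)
The proof starts from the modulated-energy identity \eqref{intenergym}, which yields $\tfrac{d}{dt}\mathcal{H}(t) = -\mathcal{D}(t)$ for a.e.\ $t\geq 0$. My strategy is to establish a time-dependent Poincar\'e-type comparison of the form $\mathcal{H}(t) \leq K(t)\,\mathcal{D}(t)$ with $K(t) \lesssim \Phi_0 + \mathcal{M}_0\, t$, where $\Phi_0 := 1+\mathcal{H}_0+\mathcal{M}_0+\|f_0\log f_0\|_{L^1_{x,v}}$; integrating $\tfrac{d}{dt}\log\mathcal{H} \leq -1/K(t)$ then produces exactly the algebraic decay $\bigl(1+\mathcal{M}_0 t/\Phi_0\bigr)^{-C/\mathcal{M}_0}$ claimed in \eqref{behavior1}.

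\textbf{Step 1 (functional comparison).} For the fluid piece, Poincar\'e on $\T^2$ gives $\|u-\langle u\rangle\|_{L^2}^2 \lesssim \|\nabla u\|_{L^2}^2 \leq \mathcal{D}$. For the kinetic piece, I would exploit the variational characterization $\int f|v-\bar\jmath|^2\,dx\,dv = \min_{c\in\R^2}\int f|v-c|^2\,dx\,dv$ attained at $c = \bar\jmath := \langle j_f\rangle/\langle n_f\rangle$, choosing $c = \langle u\rangle$ and expanding:
\[
\int f|v-\bar\jmath|^2\,dx\,dv \leq 2\int f|v-u|^2\,dx\,dv + 2\int n_f|u-\langle u\rangle|^2\,dx \leq 2\mathcal{D} + 2\|n_f\|_{L^p_x}\|u-\langle u\rangle\|_{L^{2p'}_x}^2.
\]
In two dimensions, $\|u-\langle u\rangle\|_{L^{2p'}_x}^2 \lesssim \|\nabla u\|_{L^2}^2$ for every $p>1$ by Sobolev--Poincar\'e. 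The third summand of $\mathcal{H}$ is handled via the identity
\[
\langle n_f\rangle\bigl(\langle u\rangle-\bar\jmath\bigr) = \langle n_f(u-\langle u\rangle)\rangle - \tfrac{1}{|\T^2|}\int\!\!\int f(u-v)\,dv\,dx,
\]
and Cauchy--Schwarz applied to both pieces, bounding them by $\|n_f\|_{L^2_x}\|\nabla u\|_{L^2}$ and $\mathcal{M}_0^{1/2}\mathcal{D}^{1/2}$ respectively. Assembling these yields $\mathcal{H}(t) \leq C\bigl(1+\|n_f(t)\|_{L^p_x}\bigr)\mathcal{D}(t)$ for a suitable $p>1$.

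\textbf{Step 2 (polynomial control of $\|n_f\|_{L^p_x}$).} The classical interpolation
\[
n_f(x) \leq C\|f(x,\cdot)\|_{L^\infty_v}^{q/(q+2)}\Bigl(\int|v|^q f(x,v)\,dv\Bigr)^{2/(q+2)}
\]
reduces the task to propagating $\|f(t)\|_{L^\infty_{x,v}}$ (preserved up to a factor $e^{dt}$ by the characteristic flow of the Vlasov equation, since the divergence of the $v$-drift is constant) and the moment $M_q(t) := \||v|^q f(t)\|_{L^1_{x,v}}$. Testing the Vlasov equation against $|v|^q$ gives $\tfrac{d}{dt}M_q(t) \leq C\int|v|^{q-1}|u-v|\,f\,dx\,dv$, and H\"older combined with the dissipation estimate $\int_0^t\mathcal{D}\leq\mathcal{H}_0$ and 2D Sobolev interpolation for $u$ yields an at most polynomial-in-$t$ growth of $M_q$ provided $q>4$, which is exactly the hypothesis. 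The entropy $\|f_0\log f_0\|_{L^1_{x,v}}$ enters at this step through a Dunford--Pettis/De la Vall\'ee-Poussin argument that converts $L^1_{x,v}$-conservation of $f$ into $L^p_x$-equi-integrability of $n_f$, and is precisely what fixes the denominator $\Phi_0$. The outcome is $\|n_f(t)\|_{L^p_x} \lesssim \Phi_0 + \mathcal{M}_0\, t$, whence $K(t) \lesssim \Phi_0 + \mathcal{M}_0\, t$.

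\textbf{Step 3 (conclusion and the last two summands).} Integrating $\tfrac{d}{dt}\log\mathcal{H} \leq -1/K(t)$ with $K(t) \leq C(\Phi_0+\mathcal{M}_0 t)$ gives $\mathcal{H}(t) \leq \mathcal{H}_0\bigl(1+\mathcal{M}_0 t/\Phi_0\bigr)^{-1/(C\mathcal{M}_0)}$, which is the main factor in \eqref{behavior1}. The two additional terms follow from the triangle inequalities $\|u-u_\infty\|_{L^2}^2 \lesssim \|u-\langle u\rangle\|_{L^2}^2 + |\T^2||\langle u\rangle-u_\infty|^2$ and $\int f|v-u_\infty|^2\,dx\,dv \lesssim \int f|v-\bar\jmath|^2\,dx\,dv + \mathcal{M}_0|\bar\jmath - u_\infty|^2$, combined with \eqref{uinftybis}, which rewrites $|\langle u\rangle - u_\infty|^2 = \bigl(\tfrac{\langle n_f\rangle}{1+\langle n_f\rangle}\bigr)^2|\langle u\rangle - \bar\jmath|^2$ and naturally produces the prefactor $\mathcal{M}_0/(1+\mathcal{M}_0)$; the $|\bar\jmath - u_\infty|^2$ piece is estimated in the same way. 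The genuine obstacle throughout is Step~2: tracking the growth of $M_q$ without incurring norms of $u$ uncontrolled by $\mathcal{H}_0$ and $\mathcal{M}_0$ alone, and matching the precise prefactor $\mathcal{M}_0/\Phi_0$ responsible for the sharp exponent $-C/\mathcal{M}_0$.
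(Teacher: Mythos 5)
Your overall ODE strategy — establishing $\mathcal H(t)\le K(t)\,\mathcal D(t)$ with $K(t)\lesssim \Phi_0+\mathcal M_0 t$ and integrating $\tfrac{d}{dt}\mathcal H=-\mathcal D$ — is exactly the paper's, and your Steps 1 and 3 (the decomposition of $\mathcal H$, the treatment of the third summand via the momentum identity, and the passage to $u_\infty$ via \eqref{uinftybis}) are essentially Proposition \ref{propalge} and Lemma \ref{lemma51}. The genuine gap is Step 2. Your plan to control $\|n_f(t)\|_{L^p_x}$ for some $p>1$ with at most linear growth cannot work as stated: the interpolation $n_f\lesssim \|f\|_{L^\infty_v}^{q/(q+2)}\bigl(\int|v|^qf\,dv\bigr)^{2/(q+2)}$ involves $\|f(t)\|_{L^\infty_{x,v}}$, which grows like $e^{2t}$ along the characteristics (the $v$-divergence of the drift is $-2$, see \eqref{repr1}), so any bound obtained this way is exponential in $t$ — this is precisely why the paper's Lemma \ref{c:ineq} has $e^{CT}$ in $N_T$ and is only used for the small-$f_0$ theorem. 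Likewise, the De la Vall\'ee-Poussin/Dunford--Pettis argument you invoke converts the entropy bound into equi-integrability, i.e.\ an $L\log L$ bound, which is strictly weaker than $L^p$ for every $p>1$; it cannot "fix the denominator $\Phi_0$" in the form you need. Finally, your moment estimate $\tfrac{d}{dt}M_q\le C\int|v|^{q-1}|u-v|f$ cannot be closed with only $\int_0^t\mathcal D\le\mathcal H_0$ at hand without uncontrolled norms of $u$.

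The paper's resolution of exactly this difficulty is different in kind: it never seeks an $L^p$ ($p>1$) bound on $n_f$. Instead it pairs $|u-\langle u\rangle|^2$ with $n_f$ through the Trudinger inequality ($e^{c|\Psi_u|^2}\in L^1$ with $\Psi_u=|u-\langle u\rangle|/\|\nabla u\|_{L^2}$), which is the correct dual of $L\log L$, yielding
$\int_{\T^2}|u-\langle u\rangle|^2 n_f\,dx\lesssim \|\nabla u\|_{L^2}^2\bigl(1+\int n_f|\log n_f|\,dx\bigr)$.
The linear-in-$t$ growth of $\int n_f|\log n_f|$ then comes for free from the exact entropy identity $\tfrac{d}{dt}\int f\log f=2\int f=2\mathcal M_0$ together with Jensen's inequality against a Gaussian in $v$ (Lemma \ref{lemmaflogf}), using only $\mathcal H_0$, $\mathcal M_0$ and $\|f_0\log f_0\|_{L^1_{x,v}}$. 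If you replace your Step 2 by this Trudinger/$L^1\log L^1$ argument, the rest of your proof goes through.
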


Our second result states that  $u(t)$ tends to $u_\infty$ 
  exponentially fast when $t$ goes to infinity,  and specifies the large-time
behavior  of the distribution function $f$ 
under the additional condition that the $L^{\infty}_{x,v}$ norm of $f_0$ is sufficiently small.
In fact, due to the drag term, it is expected that $f(t,x,v)$ has exponential growth with respect to time
at some points of the phase space (see Formula \eqref{repr11}), while the total mass is conserved (see \eqref{mass}). 
Consequently, the limit distribution if it exists should be monokinetic and 
 concentrated at $v=u_\infty,$ namely of the form
 \begin{equation}\label{eq:limit} f(t,x,v)\rightarrow n_{\infty}(x-u_{\infty}t) \otimes \delta _{v=u_{\infty}}\quad\quad \text{as}\quad t\rightarrow \infty.\end{equation}
 The above relation reveals that if $u_{\infty}=0$, then the distribution function $f$ converges to a stationary solution while, when $u_{\infty}\neq 0$, the asymptotic behavior is that of a traveling wave. 
 As the limit is no longer a function,  a distance \emph{between measures} must be used to evaluate the speed of convergence.  Following \cite{han1}, we use the Wasserstein distance $W_1$ (see Definition \ref{MKD}). 
\smallbreak
Let us now state  our result:
\begin{theorem}\label{theorem11}
Assuming \eqref{a1},   there exists a constant $\alpha_1>0$ depending only on $q$, $\mathbb{T}^2$, $\mathcal H_0, \mathcal{M}_0$, $u_{\infty}$ and {{$\| (|v|^3+|v|^{q})f_0\|_{L^{\infty}_{x,v}}$}} such that if
\begin{equation}\label{smin1}
\|f_{0}\|_{L^{\infty}_{x,v}}\leq \alpha_1,
\end{equation}
then, for any $t\geq 1$, the global solution $(f,u,P)$ to System \eqref{homoVNS} obtained in Theorem \ref{theorem1} satisfies
\begin{multline}\label{behavior11}
\mathcal{H}(t)+
W_1(f(t),n_{\infty}(\cdot-u_{\infty}t) \otimes \delta _{v=u_{\infty}})+\| |v-u_{\infty}|^2 f(t)\|_{L^{1}_{x,v}}\\+\|n_f(t)-n_{\infty}{{(\cdot-u_{\infty}t)}}\|_{\dot{H}^{-1}}
+\|u(t)-u_{\infty}\|_{H^2}+\|{{\dot{u}}}(t)\|_{L^2}+\|\nabla P(t)\|_{L^2}\leq Ce^{-{\lambda_0} t},
\end{multline}
with  {{$\dot{u}=u+u\cdot\nabla u$}} and
\begin{equation}{\lambda_0}:= \frac{1}{C_1 \left(1+\cM_0\log{(1+\|{{|v-u_{\infty}|^3}}f_0\|_{L^{\infty}_{x,v}})}\right)},\label{lambda}
\end{equation}
where $C$ depends only on suitable norms of the initial data, $C_1$ depends only on 
$\T^2,$ $q$ and $\mathcal H_0$, and the profile $n_{\infty}\in \dot H^{-1}\cap L^\infty$ is defined by
\begin{equation}
n_{\infty}(x):= n_{f_0}-\div_{\!x}\int_0^{\infty}\!\!\int_{\mathbb{R}^2}(v-u_{\infty})f(\tau,x+\tau u_\infty,v)\,dvd\tau.\label{eq:ninfty}
\end{equation}
\end{theorem}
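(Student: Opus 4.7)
The plan is a bootstrap argument driven by the characteristic flow of the Vlasov equation. Let $(X,V)(t;s,x,v)$ solve $\dot X=V$, $\dot V=u(t,X)-V$ with data $(x,v)$ at time $s$. The divergence structure of the Vlasov equation yields the representation $f(t,x,v)=e^{2t}f_0((X,V)(0;t,x,v))$, and integrating the $V$-equation along characteristics gives $V(t)-u_\infty=e^{-t}(v_0-u_\infty)+\int_0^t e^{-(t-s)}(u(s,X(s))-u_\infty)\,ds$. Thus, on any time interval $[0,T]$ on which one has the a priori bound $\|u(t)-u_\infty\|_{L^\infty_x}\leq \eta e^{-\lambda t}$ for some $\lambda>0$ and small $\eta$, the characteristic velocities concentrate exponentially near $u_\infty$: $|V(t)-u_\infty|\lesssim (1+|v_0-u_\infty|)e^{-\lambda t}$. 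Together with the polynomial tail $(1+|v|^q)f_0\in L^\infty_{x,v}$ with $q>4$, this produces uniform pointwise bounds on $f$ and exponentially decaying bounds on the relative moments $\int_{\R^2}|v-u_\infty|^k f\,dv$ for $k\in[0,q)$.

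The crucial ingredient for closing the bootstrap is an $L^\infty_x$ control of $n_f$. Splitting $\int f\,dv$ into $\{|v-u_\infty|\leq R(t)\}$ and its complement, bounding the first piece by $\pi R(t)^2\|f_0\|_{L^\infty_{x,v}}$, controlling the tail by the transported moment $\||v-u_\infty|^3 f_0\|_\infty$, and optimizing in $R(t)$ yields a bound of the type $\|n_f(t)\|_{L^\infty_x}\lesssim 1+\|f_0\|_{L^\infty_{x,v}}\log(1+\||v-u_\infty|^3 f_0\|_{L^\infty_{x,v}})$. Under \eqref{smin1}, the Brinkman force $-n_f(u-u_\infty)+(j_f-u_\infty n_f)$ is then pointwise small and exponentially decaying in time, so the standard 2D maximal regularity theory for the Navier--Stokes system (applied in the frame shifted by $u_\infty$) bootstraps $u-u_\infty\in H^2$, $\dot u\in L^2$, $\nabla P\in L^2$ with exponentially decaying norms, and closes in particular the a priori bound on $\|u-u_\infty\|_{L^\infty_x}$ through the 2D Sobolev embedding $H^2\hookrightarrow L^\infty$.

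Exponential decay of the modulated energy $\mathcal H$ follows from the identity $\mathcal H'(t)=-\mathcal D(t)$ (cf. \eqref{intenergym}) coupled to a coercive inequality $\mathcal D(t)\geq \lambda_0\mathcal H(t)$. On the fluid side, Poincar\'e gives $\|\nabla u\|_{L^2}^2\gtrsim \|u-\langle u\rangle\|_{L^2}^2$. On the kinetic side, expanding $\int|u-v|^2 f\,dxdv$ and using the uniform bound on $\|n_f\|_{L^\infty_x}$ to absorb cross terms reproduces exactly the three summands of $\mathcal H$ in \eqref{eq:modulated}. The explicit rate $\lambda_0$ in \eqref{lambda} emerges by tracking how the logarithmic dependence of $\|n_f\|_{L^\infty_x}$ on $\||v-u_\infty|^3 f_0\|_\infty$ enters the coercivity constants. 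A continuity-in-time argument then closes the bootstrap once $\delta_1$ is chosen small enough in terms of the quantities listed before \eqref{smin1}.

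Finally, for the Wasserstein and profile claims, I would use the duality $W_1(\mu,\nu)=\sup_{\|\varphi\|_{\Lip}\leq 1}\int\varphi\,d(\mu-\nu)$: splitting the pairing into a $v$-piece controlled by $\int|v-u_\infty|f\,dxdv\lesssim e^{-\lambda_0 t}$ (via the characteristic bound) and an $x$-piece controlled by $\|n_f(t)-n_\infty(\cdot-u_\infty t)\|_{\dot H^{-1}}$ yields the Wasserstein estimate. The profile $n_\infty$ is identified as follows: the continuity equation $\partial_t n_f+\div j_f=0$, applied to $\tilde n_f(t,x):=n_f(t,x+u_\infty t)$, becomes $\partial_t\tilde n_f(t,x)=-\div\!\int(v-u_\infty)f(t,x+u_\infty t,v)\,dv$, whose time-integration from $0$ to $\infty$ produces exactly \eqref{eq:ninfty}, while the tail from $t$ to $\infty$ gives the $\dot H^{-1}$ bound. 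The \emph{main obstacle} is to close the bootstrap with only a logarithmic (rather than polynomial) dependence of $\|n_f\|_{L^\infty_x}$ on the initial moments: this delicate interplay between exponential velocity compression and 2D fluid regularity produces exactly the rate \eqref{lambda} and distinguishes the present analysis from the three-dimensional one of \cite{han1}, where an additional smallness of the velocity was required.
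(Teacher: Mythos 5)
There is a genuine gap, concentrated in the step you rightly single out as the ``main obstacle'': the time-uniform $L^\infty_x$ bound on $n_f$. Your splitting of $\int f\,dv$ over $\{|v-u_\infty|\le R(t)\}$ and its complement bounds the bulk piece by $\pi R(t)^2\|f_0\|_{L^\infty_{x,v}}$, but the representation formula \eqref{repr1} gives $f(t,x,v)=e^{2t}f_0(X(0),V(0))$, so the correct bound on the bulk is $\pi R(t)^2 e^{2t}\|f_0\|_{L^\infty_{x,v}}$; the exponential factor cannot be discarded, and optimizing in $R$ then yields either a power $\|f_0\|_{L^\infty}^{1/3}\||v-u_\infty|^3f_0\|_{L^\infty}^{2/3}$ (not a logarithm) in the best case, or a bound that still grows in time once the intermediate region $|v-u_\infty|\lesssim\|u-u_\infty\|_{L^1_t(L^\infty)}$ is accounted for. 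The paper's resolution (Lemma \ref{lemmaninfty}, adapted from \cite{han1}) is different in kind: one shows that $v\mapsto V(t_*;t,x,v)$ is a diffeomorphism with Jacobian bounded below by $\tfrac12 e^{2(t-t_*)}$, which exactly cancels the $e^{2(t-t_*)}$ growth; this requires the smallness of $\int\|\nabla u\|_{L^\infty}\,dt$ as in \eqref{eq:Lip}, i.e.\ control of the \emph{gradient} of $u$ in $L^1_t(L^\infty_x)$, which your ansatz $\|u(t)-u_\infty\|_{L^\infty_x}\le\eta e^{-\lambda t}$ does not provide. Relatedly, the logarithm in the rate \eqref{lambda} does not come from $\|n_f\|_{L^\infty}$ itself (which is bounded by roughly $\||v-u_\infty|^3f_0\|_{L^\infty}+1$, no log); it enters through the coercivity estimate $\mathcal H\le C\bigl(1+\cM_0\log(1+\|n_f\|_{L^\infty})\bigr)\cD$, proved via the Fourier-splitting Lemma \ref{lemmalogn}, which your sketch does not supply.

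The second structural gap is the initialization of the bootstrap. You impose $\|u(t)-u_\infty\|_{L^\infty_x}\le\eta e^{-\lambda t}$ with small $\eta$ from $t=0$, but the theorem makes no smallness assumption on $u_0$ (this is precisely its advertised improvement over \cite{han1}), so this ansatz fails near $t=0$ for large data. The paper's argument is two-staged: the unconditional algebraic decay of Proposition \ref{propalge} produces an explicit time $t_\eta$ with $\cH(t_\eta)\le\eta$ and $\cD(t_\eta)\le\eta$; on $[0,t_\eta]$ the moments of $f$ are allowed to grow like $e^{Ct_\eta}\|f_0\|_{L^\infty_{x,v}}$ and the smallness threshold $\delta_1$ in \eqref{smin1} is calibrated against $t_\eta$ to keep them of order one; the exponential bootstrap (with the Lipschitz bound \eqref{eq:Lip}, the material-derivative energy functionals, the Hardy--BMO cancellation for the pressure, and the $L^{3,1}$ estimate for $\nabla u$) only starts at $t_\eta$. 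Your final paragraph on $W_1$ and the identification of $n_\infty$ via the shifted continuity equation is correct and matches the paper, but without the two ingredients above the exponential decay it relies on is not established.
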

 In contrast to \cite{han1},  the smallness of $\mathcal{H}_0$ 
\emph{is not} required in Theorems \ref{theorem1} and \ref{theorem11}: 
 the initial velocity may be arbitrarily large. In fact, our analysis does not 
require  $\nabla u$ to be small  in $L^1(t_0,\infty;L^{\infty})$ for some (small) $t_0>0$ while it was a key ingredient in \cite{han1}. 

To obtain the first result, the main idea is to adapt the $L^1 \log L^1$-estimate of \cite{goudonhy1} to our situation. 
First, taking advantage of the Trudinger inequality will enable us  to get  the following coercivity inequality:
$$
\mathcal{H}(t)\lesssim \Big( 1+\mathcal{H}_0+\| n_f\log n_f \|_{L^1_{x,v}}+\cM_0  \Big)\cD(t).
$$
Then, the key observation is that the $L^1 \log L^1$ `norm' of $n_f$ has at most linear time growth. This property 
will allow us to get \eqref{behavior1} by integrating some suitable differential inequality.
 \smallbreak
With regard to the exponential convergence of the solution to equilibrium (Theorem \ref{theorem11}), our strategy is to construct coupled energy functional inequalities analogous to those in \cite{danchinvns,LSZ1} so as to control the higher–order derivatives of the fluid velocity. 
However, compared to  the 3D whole space case with small data treated in \cite{danchinvns,LSZ1}, one has
to face three additional difficulties:  
 we are considering \emph{large} initial velocities in a \emph{two}-dimensional 
\emph{periodic box}. The difficulty that is caused by the periodicity is encountered when 
handling the convective term $u\cdot\nabla u$: the average $\langle u\rangle$ of $u$ 
might have a linear time growth.
To overcome this, we include the convection term 
in our energy functionals; that is, we use the material derivative $\dot{u}$ rather than the time derivative 
 $u_t$ (see Lemma \ref{lemma32}).
  The difficulty caused by dimension $2$ is related to the fact that the space ${H}^1$ 
   is embedded in the ${\rm{BMO}}$ space rather than in the $L^\infty$ space. 
This difficulty is by-passed thanks to a cancellation property in the coupling between the pressure and the velocity, 
  combined with the celebrated $\div$-${\rm curl}$ result that is recalled in Lemma \ref{lemmaBMO}.
 Finally, the possible largeness of the initial velocity is handled by means of the following
 `time-splitting energy argument'. 
 Let us beforehand fix some  small constant $\eta>0$ and time  $t_\eta>1$ so large that
 the modulated energy $\mathcal{H}$ and the dissipation rate $\cD$ satisfy $\mathcal{H}(t_\eta)\leq\eta$ and $\cD(t_\eta)\leq\eta$
(note that $t_\eta$ can be bounded explicitly in terms of the data, due to our general algebraic convergence estimate \eqref{behavior1}).
Now, we argue as follows:
\begin{itemize}[leftmargin=8mm]
\item[---] In the interval $[0,t_\eta],$ we establish  \emph{time-dependent} upper bounds of $n_f$, $j_f$ and $e_f$, of the form $\mathcal{O}(1)+\mathcal{O}(1)e^{Ct_\eta}\|f_0\|_{L^{\infty}_{x,v}}$ (see Subsection \ref{subsectionlarge}).  
Therefore,  one can find some decreasing function $\alpha_1$ such that 
whenever $\|f_0\|_{L^{\infty}_{x,v}}\leq \alpha_1(t_\eta),$  the bounds  of $n_f$, $j_f$ and $e_f$  depend only on the  initial data, 
independently of  $\eta$.
\item[---] In the interval $[t_\eta,\infty),$ we perform  a bootstrap argument ensuring the Lipschitz bound 
\begin{equation}\label{eq:Lip0}\int_{t_\eta}^T\|\nabla u\|_{L^{\infty}}\,dt\leq \frac{1}{10}\quad\hbox{for all}\quad  T>t_\eta.\end{equation}
Taking advantage of  the change of the velocity variable $v$ originating from  \cite{han1} (and recalled in Lemma \ref{lemmaninfty}),
Inequality \eqref{eq:Lip0}  for some fixed $T>t_\eta$ allows us to derive \emph{time-independent} bounds of $n_f$, $j_f$ and $e_f$ on $[t_\eta,T].$ 
In order to get \eqref{eq:Lip0} \emph{for all $T$}, the key ingredient is to establish higher order energy estimates with exponential time weights.  
Following  \cite{danchinvns,LSZ1}, we employ three levels of energy functionals, 
which, combined with suitable functional embedding, provide us with a control of the  
$L^1(t_\eta,T;L^{\infty})$ norm of $\nabla u$ by some power of $\eta,$ whenever  $n_f$, $j_f$ and $e_f$ are under control. 
Leveraging a bootstrap argument eventually gives \eqref{eq:Lip0} with $T=\infty,$ if $\|f_0\|_{L^{\infty}_{x,v}}$ is small enough.
\end{itemize}


\subsection{The inhomogeneous case}

Our second goal is to extend Theorems \ref{theorem1} and \ref{theorem11} to the \emph{inhomogeneous} Vlasov-Navier-Stokes system \eqref{VNS} with, possibly,  vacuum and discontinuous fluid density. 

Smooth solutions of  \eqref{VNS} verify the following mass  and  momentum  conservation laws:
\begin{equation}\begin{aligned}
\int_{\mathbb{T}^2}\rho(t)\,dx=\int_{\mathbb{T}^2}\rho_0\,dx,&\qquad
\int_{\mathbb{T}^2}n_{f}(t)\,dx=\mathbf{M}_0:=\int_{\mathbb{T}^2}n_{f_0}\,dx,\\
\int_{\mathbb{T}^2}(\rho u +j_{f})(t)\, dx&=\int_{\mathbb{T}^2}( \rho_0 u_0 +j_{f_0})\, dx,\label{mass:in}
\end{aligned}\end{equation}
and the  energy balance: 
\begin{equation}\label{energyeq:in}
\mathbf{E}(t)+\int_{0}^{t}\mathbf{D}(\tau)\,d\tau= \mathbf{E}_0:=\mathbf{E}(0)
\end{equation}
with the kinetic energy
\begin{align}
&\mathbf{E}:=\frac{1}{2}\int_{\mathbb{T}^2} \rho |u|^2\,dx+\frac{1}{2}\int_{\mathbb{T}^2\times\mathbb{R}^2}|v|^2f\, dxdv,\label{enrg:in}
\end{align}
and the dissipation rate
\begin{align}
&\mathbf{D}:=\int_{\mathbb{T}^2}|\nabla u|^2\,dx+\int_{\mathbb{T}^2\times\mathbb{R}^2}|u-v|^2f\, dxdv.\label{enrgD:in}
\end{align}
 As in \cite{choiinhomo}, to investigate the convergence of $(f,\rho,u)$ to its equilibrium state, we introduce the {\emph{modulated energy}}
\begin{multline}\label{eq:modulated:in}
\mathbf{H}:=\frac{1}{2}\int_{\mathbb{T}^2}\rho\left|u-\frac{\langle \rho u\rangle}{\langle \rho\rangle} \right|^2\,dx+\frac{1}{2}\int_{\mathbb{T}^2\times\mathbb{R}^2}\left|v-\frac{\langle j_{f}\rangle}{\langle n_{f}\rangle} \right|^2f\, dxdv\\
+\frac{1}{2}\frac{\|n_f\|_{L^1}\|\rho\|_{L^1}}{\|n_f\|_{L^1}+\|\rho\|_{L^1}}
 \left|\frac{\langle \rho u\rangle}{\langle\rho\rangle}-\frac{\langle j_{f}\rangle}{\langle n_{f}\rangle}\right|^2.
\end{multline}
We still have the balance of modulated energy, namely
\begin{equation}\label{energym:in}
\mathbf{H}(t)+\int_0^t\mathbf{D}(\tau)\,d\tau=\mathbf{H}_0:=\mathbf{H}(0).
\end{equation}
When $\mathbf{H}(t)$ converges to $0$ as $t\rightarrow\infty$, one infers from \eqref{mass:in} and \eqref{eq:modulated:in} that  $u$ converges to the constant velocity field
\begin{equation}
\bar{u}_{\infty}:=\frac{\langle \rho_0 u_{0}+j_{f_{0}}\rangle}{\langle n_{f_0}\rangle+\langle \rho_0\rangle},   \label{uinfty:in}
\end{equation}
and that there exist two profiles $\bar{n}_{\infty}=\bar{n}_{\infty}(x)$ and $\bar{\rho}_{\infty}=\bar{\rho}_{\infty}(x)$ such that
$$
f(t,x,v)\rightarrow \bar{n}_{\infty}(x-\bar{u}_{\infty}t) \otimes \delta _{v=\bar u_{\infty}}\quad\text{and}\quad \rho(t,x)\rightarrow \bar{\rho}_{\infty}(x-\bar{u}_{\infty}t) \quad  \text{as}\quad t\rightarrow \infty.
$$

Our first result pertaining to \eqref{VNS} is stated as follows.
\begin{theorem}\label{theorem1:in}
 Assume that $(f_0,\rho_0,u_0)$ satisfies 
\begin{equation}\label{a1:in}
\begin{aligned}
& 0\leq \rho_0\in L^{\infty},\quad u_0\in L^2,\quad \div u_0=0,\quad  0\leq f_0\in L^1_{x,v}\cap L^{\infty}_{x,v}\andf |v|^{2}f_{0}\in  L^{1}_{x,v}.
 \end{aligned}
\end{equation}
Then, Equations \eqref{VNS} supplemented with the initial data $(f_0,\rho_0,u_0)$ admit a global weak solution $(f,\rho,u,P)$ satisfying for all $t\geq0,$
\begin{equation}\label{energy:in}
\mathbf{E}(t)+\int_{0}^{t}\mathbf{D}(\tau)\,d\tau\leq  \mathbf{E}_0,
\end{equation}
and, denoting $\mathbf{R_0}:=\|\rho_0\|_{L^\infty}(1+\mathbf{M}_0\|\rho_0\|_{L^1}^{-1})$, we have
\begin{multline}\label{behavior1:in}
\mathbf{H}(t)+\frac{\mathbf{M}_0}{\mathbf{M}_0+\|\rho_0\|_{L^1}}\|\sqrt{\rho}(t)(u(t)-\bar{u}_{\infty})\|_{L^2}^2+\| f(t)|v-\bar{u}_{\infty}|^2\|_{L^1_{x,v}}\\
 \leq C\bigg( 1+\frac{\mathbf{M}_0  t}{1+\mathbf{H}_0+\mathbf{R}_0+
 \| f_{0}\log f_{0} \|_{L^1_{x,v}}}  \bigg)^{-\frac{C}{\mathbf{M}_0}}\mathbf{H}_0,
\end{multline}
where $C$ is  a positive constant depending only  on  $\T^2$.
\end{theorem}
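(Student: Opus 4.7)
The overall strategy is to adapt to the variable-density setting the argument sketched after Theorem \ref{theorem1}, replacing $\cH, \cD, \cM_0, \cE_0$ by their inhomogeneous analogues and keeping careful track of the weights coming from $\rho$. First, I would obtain a global finite-energy weak solution by a compactness argument parallel to \cite{wangd1, choiinhomo, Yu1}: regularize the Vlasov and the momentum equations, pass to the limit using the uniform bounds furnished by \eqref{energyeq:in} and standard renormalized-transport stability for the continuity equation. Because $\div u=0$, the transport of $\rho$ preserves every Lebesgue norm, so $\|\rho(t)\|_{L^p}=\|\rho_0\|_{L^p}$ for all $p\in[1,\infty]$ and $t\geq 0$; in particular the quantity $\mathbf{R}_0=\|\rho_0\|_{L^\infty}(1+\mathbf{M}_0/\|\rho_0\|_{L^1})$ governing the bound \eqref{behavior1:in} is time-invariant. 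The mass conservation laws \eqref{mass:in} follow from testing the Vlasov and continuity equations against $1$, and \eqref{energy:in} from testing the momentum equation against $u$ (together with $\int v\cdot(u-v)f\,dv = -\int|u-v|^2 f\,dv+\int u\cdot(v-u)f\,dv$).

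Next, I would derive the modulated-energy inequality $\mathbf{H}(t)+\int_0^t\mathbf{D}(\tau)\,d\tau\leq\mathbf{H}_0$ by expanding the three summands of \eqref{eq:modulated:in} and using the conservation laws \eqref{mass:in}. The only genuine novelty with respect to the homogeneous case lies in the first summand: writing $\bar u(t):=\langle \rho u\rangle/\langle\rho\rangle$, the continuity equation gives $\frac{d}{dt}\int \rho\,\bar u\cdot u\,dx = \langle\rho u\rangle\cdot \dot{\bar u}$, and the convective term $\div(\rho u\otimes u)$ vanishes on the constant component $\bar u$ thanks to $\div u=0$ and periodicity; combined with the kinetic contribution and the cross term, one recovers the familiar identity $\frac{d}{dt}\mathbf{H}+\mathbf{D}=0$ (at the formal level, an inequality at the level of weak solutions).

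The central step is a Trudinger-type coercivity estimate
\begin{equation*}
\mathbf{H}(t)\;\leq\;C\Bigl(1+\mathbf{H}_0+\mathbf{R}_0+\|n_f(t)\log n_f(t)\|_{L^1_x}\Bigr)\mathbf{D}(t).
\end{equation*}
For the first summand of $\mathbf{H}$ I decompose $u-\bar u=(u-\langle u\rangle)-\langle\rho(u-\langle u\rangle)\rangle/\langle\rho\rangle$, use the uniform bound $\|\rho(t)\|_{L^\infty}\leq\|\rho_0\|_{L^\infty}$ and the two-dimensional Poincaré inequality to arrive at $\int\rho|u-\bar u|^2dx\leq C\|\rho_0\|_{L^\infty}\|\nabla u\|_{L^2}^2$. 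The kinetic summand and the cross term are handled by the Goudon--Trudinger argument already outlined for Theorem \ref{theorem1}, with $\mathcal M_0$ replaced by $\mathbf{R}_0$ so as to absorb the fluid-side weights $\|\rho_0\|_{L^\infty}$ and $\|\rho_0\|_{L^1}$ that appear when one applies Jensen to convert $\int|u-v|^2 f\,dxdv$ into a bound on $\bigl|\bar u-\langle j_f\rangle/\langle n_f\rangle\bigr|^2$. The logarithmic factor is controlled through $\frac{d}{dt}\|f\log f\|_{L^1_{x,v}}=2\mathbf{M}_0$, a consequence of $\div_v(u-v)=-2$ in dimension $2$; combining this with the Jensen inequality $n_f\log n_f\leq\int f\log f\,dv$ (pointwise in $x$) and the kinetic moment bound $\||v|^2 f\|_{L^1}\leq 2\mathbf{E}_0$ yields $\|n_f(t)\log n_f(t)\|_{L^1_x}\lesssim 1+\mathbf{M}_0 t+\|f_0\log f_0\|_{L^1_{x,v}}$. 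Inserting this into the coercivity estimate and combining with the modulated-energy inequality produces the differential inequality
\begin{equation*}
\mathbf{H}'(t)+\frac{c\,\mathbf{M}_0}{1+\mathbf{H}_0+\mathbf{R}_0+\mathbf{M}_0 t+\|f_0\log f_0\|_{L^1_{x,v}}}\,\mathbf{H}(t)\;\leq\;0,
\end{equation*}
whose integration gives exactly the algebraic rate in \eqref{behavior1:in}. The control on $\|u-\bar u_\infty\|_{L^2}^2$ and on $\|f|v-\bar u_\infty|^2\|_{L^1_{x,v}}$ then follows from expanding squares around $\bar u_\infty$ using \eqref{uinfty:in} and the obvious inhomogeneous analogue of \eqref{uinftybis}.

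The main obstacle is the coercivity estimate: the possible presence of vacuum for $\rho$ forbids a direct weighted Poincaré argument and forces one to exploit the three summands of $\mathbf{H}$ jointly — this is exactly what dictates the form of the weight $\mathbf{R}_0=\|\rho_0\|_{L^\infty}(1+\mathbf{M}_0/\|\rho_0\|_{L^1})$ in \eqref{behavior1:in}, since both $\|\rho_0\|_{L^\infty}$ and $\|\rho_0\|_{L^1}$ enter the constants of the weighted Poincaré and Trudinger inequalities on $\T^2$. A secondary difficulty is propagating the bound $\|n_f\log n_f\|_{L^1_x}\lesssim 1+t$ at the level of approximate solutions, since $f\log f$ changes sign and must be split via $f(\log f)_\pm$ and compared against the second-moment $|v|^2 f$ to avoid loss of mass at infinity in $v$.
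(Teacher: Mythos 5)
Your proposal follows essentially the same route as the paper: the split of $\mathbf H$ into its three summands, the unweighted Poincar\'e bound $\int\rho|u-\langle\rho u\rangle/\langle\rho\rangle|^2\,dx\lesssim\|\rho_0\|_{L^\infty}\|\nabla u\|_{L^2}^2$ (which is how the paper also accommodates vacuum), the Trudinger/Goudon argument for $\int|u-\langle u\rangle|^2 n_f\,dx$ with the linear-in-time growth of $\|n_f\log n_f\|_{L^1}$, and the resulting differential inequality all match Section 4 of the paper. The only imprecision is the pointwise inequality $n_f\log n_f\le\int f\log f\,dv$, which as written is false; it must be run through Jensen with a Gaussian reference measure, producing the extra $\frac12\int|v-\bar u|^2f\,dv+\log(2\pi)n_f$ terms that your second-moment bound is then correctly invoked to absorb.
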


Finally, for initial density bounded away from zero and small enough $f_0,$ 
we establish exponential stability for \eqref{VNS} and get the first description of the large-time asymptotics of the fluid density $\rho$. In the  particular case $f_0=0$, we obtain exponential {convergence} estimates for the pure inhomogeneous Navier-Stokes flow, which complement those recently obtained in \cite{DW}. 
\begin{theorem}\label{theorem11:in}
Assume that \eqref{a1} holds, that $\rho_0$ is bounded away from zero, and that $|v|^{q}f_{0}\in  L^{\infty}_{x,v}$ for some $q>4$. There exists a constant $\alpha_2>0$ depending only on $q$, $\mathbb{T}^2$, $\mathbf H_0$, $\mathbf{M}_0$, $\bar{u}_{\infty}$, $\| (|v|^3+|v|^{q})f_0\|_{L^{\infty}_{x,v}}$, $\|\rho_0\|_{L^1}$ and $\|(\rho_0,\rho_0^{-1})\|_{L^{\infty}}$ such that, if
\begin{equation}\label{smin1:in}
\|f_{0}\|_{L^{\infty}_{x,v}}\leq \alpha_2,
\end{equation}
then, for any $t\geq 1$, the global solution $(f,u,P)$ to System \eqref{VNS} given by Theorem \ref{theorem1:in} satisfies
\begin{align}
\mathcal{H}(t)+
W_1&(f(t),\bar{n}_{\infty}(\cdot-\bar{u}_{\infty}t) \otimes \delta _{v=\bar{u}_{\infty}})+\| |v-\bar{u}_{\infty}|^2 f(t)\|_{L^{1}_{x,v}}\label{2.24}\\
&+\|n_f(t)-\bar{n}_{\infty}{{(\cdot-\bar u_{\infty}t)}}\|_{\dot{H}^{-1}}+\|\rho(t)-\bar{\rho}_{\infty}(\cdot-\bar{u}_{\infty} t)\|_{\dot{H}^{-1}} \nonumber\\
&\quad\quad\quad\quad+\|u(t)-\bar{u}_{\infty}\|_{H^2}+\|{{\dot{u}}}(t)\|_{L^2}+\|\nabla P(t)\|_{L^2}\leq Ce^{-{\lambda_1} t},\nonumber
\end{align}
with  {{$\dot{u}=u+u\cdot\nabla u$}} and
\begin{equation}
{\lambda_1}:= \frac{1}{C_2 \left(1+\mathbf{M}_0\log{(1+\||v-\bar u_{\infty}|^3f_0\|_{L^{\infty}_{x,v}})}\right)},\label{lambda1}
\end{equation}
where $C$ depends only on suitable norms of the initial data, $C_2$ depends only on 
$\T^2,$ $q$ and $\mathbf{H_0}$, and the profiles $\bar{n}_{\infty}, \bar{\rho}_\infty \in \dot H^{-1}\cap L^\infty$ are, respectively, defined by
\begin{equation}
\bar{n}_{\infty}{(x)}:= n_{f_0}{(x)}-\div_{\!x}\int_0^{\infty}\!\!\int_{\mathbb{R}^2}(v-\bar u_{\infty})f{(\tau,x+ \bar{u}_{\infty} \tau,v)}\,dvd\tau\label{eq:ninfty:in}
\end{equation}
and
\begin{equation}
\bar{\rho}_{\infty}(x):= \rho_0(x)-\div \int_0^{\infty} \bigl(\rho (u-\bar u_\infty)\bigr)(\tau,x+\bar u_\infty\tau)\, d\tau.\label{eq:rhoinfty}
\end{equation}
\end{theorem}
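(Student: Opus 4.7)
The proof follows the roadmap developed for Theorem \ref{theorem11}, upgraded to absorb the density equation. For any fixed small $\eta>0$, the algebraic decay \eqref{behavior1:in} of Theorem \ref{theorem1:in}, combined with the dissipation balance \eqref{energym:in}, produces some $t_\eta\geq 1$ bounded explicitly in the data with $\mathbf{H}(t_\eta)\leq\eta$, and, by a mean-value argument on $\int \mathbf{D}\,d\tau$, an intermediate time $t_\eta^*\in[t_\eta,2t_\eta]$ at which $\mathbf{D}(t_\eta^*)\lesssim\eta$. On the interval $[0,t_\eta^*]$, adapting the argument of Subsection \ref{subsectionlarge} to the characteristic representation of $f$ and to the Lagrangian flow of $\rho$ yields bounds of the form
\begin{equation*}
\|(n_f,j_f,e_f)(t)\|_{L^\infty}\lesssim e^{Ct_\eta}\|(1+|v|^q)f_0\|_{L^\infty_{x,v}},\quad \|\rho(t)\|_{L^\infty}+\|\rho^{-1}(t)\|_{L^\infty}\lesssim e^{Ct_\eta}\|(\rho_0,\rho_0^{-1})\|_{L^\infty}.
\end{equation*}
The no-vacuum assumption ensures positivity of $\rho$, and choosing $\delta_2=\delta_2(t_\eta)$ small enough turns these bounds into data-dependent quantities, independent of $\eta$.

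On $[t_\eta^*,\infty)$ the key bootstrap is $\int_{t_\eta^*}^T\|\nabla u\|_{L^\infty}\,d\tau\leq 1/10$ for every $T>t_\eta^*$. Under this assumption, the change of velocity variable of Lemma \ref{lemmaninfty} provides \emph{time-independent} bounds of $n_f,j_f,e_f$ on $[t_\eta^*,T]$. For the fluid block $(\rho,u)$, we adapt the three-level exponentially time-weighted energy estimates of \cite{danchinvns,LSZ1} to the periodic 2D inhomogeneous setting, using the material derivative $\dot u=u_t+u\cdot\nabla u$ (cf.\ Lemma \ref{lemma32}) rather than $u_t$, with $\rho$ appearing as a weight, for instance $\int\rho|\dot u|^2\,dx$ replacing $\int|\dot u|^2\,dx$. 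The logarithmic failure of $H^1\hookrightarrow L^\infty$ in dimension two is handled via the div-curl lemma (Lemma \ref{lemmaBMO}) applied to the coupled quantity $\nabla P-\int(u-v)f\,dv$ coming from the momentum equation. Summing the weighted estimates and applying Sobolev embedding, we deduce $\|\nabla u\|_{L^1(t_\eta^*,T;L^\infty)}\lesssim\eta^\alpha$ for some $\alpha>0$, which closes the bootstrap once $\eta$, hence $\|f_0\|_{L^\infty_{x,v}}$, is taken small enough, and delivers the exponential $H^2$-decay of $u-\bar u_\infty$ together with that of $\dot u$ and $\nabla P$ in $L^2$.

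Once this strong exponential decay is available, the convergence of $f$ to the monokinetic profile in $W_1$ and of $n_f$ to $\bar n_\infty$ in $\dot H^{-1}$ is obtained exactly as in the proof of Theorem \ref{theorem11}, via duality on Lipschitz test functions transported along the flow. For the density we rewrite the continuity equation in the moving frame $y=x-\bar u_\infty t$ as $\partial_t\bigl(\rho(t,y+\bar u_\infty t)\bigr)+\div_{\!y}\bigl(\rho(u-\bar u_\infty)(t,y+\bar u_\infty t)\bigr)=0$, and integrate in time on $[0,\infty)$; the integrability of $\rho(u-\bar u_\infty)\in L^1(\R_+;L^2)$ yields the representation \eqref{eq:rhoinfty}, and the $\dot H^{-1}$-rate follows at once from the exponential $L^2$-decay of $\rho(u-\bar u_\infty)$. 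The main technical obstacle lies in the second paragraph: the density-weighted functionals must absorb the Brinkman cross term $\int(u-v)f\cdot\dot u\,dx$ into the viscous dissipation while tolerating possible discontinuities of $\rho$ along characteristics, and this is precisely where the time-independent control of $(n_f,j_f,e_f)$ obtained from the Vlasov bootstrap, combined with the smallness $\|f_0\|_{L^\infty_{x,v}}\leq\delta_2$, is used to close the estimates.
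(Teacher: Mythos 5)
Your plan follows the paper's proof essentially verbatim: time-splitting at $t_\eta$ with time-dependent bounds on $[0,t_\eta]$ absorbed by the smallness of $\|f_0\|_{L^\infty_{x,v}}$, a Lipschitz bootstrap on $[t_\eta,\infty)$ based on exponentially weighted, density-weighted material-derivative energy functionals (with the BMO/Hardy duality handling the 2D embedding failure), and the moving-frame continuity equation to define $\bar\rho_\infty$ and deduce the $\dot H^{-1}$ rate from the exponential $L^2$-decay of $\rho(u-\bar u_\infty)$. One correction: since $\rho$ is transported by a divergence-free field, one has exactly $\rho_*\le\rho(t,x)\le\rho^*$ for all $t$ — there is no $e^{Ct_\eta}$ growth — and this time-independent two-sided bound is what you actually need, because $\rho^*$ and $\rho_*^{-1}$ enter the constants of the weighted energy estimates on all of $[t_\eta,\infty)$, not only on $[0,t_\eta]$.
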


\begin{remark} 
If $(f_0,\rho_0,u_0)$ is sufficiently regular, then the  solutions obtained in Theorem \ref{theorem1:in} are unique.
 It would be of interest to investigate the uniqueness of these solutions if one only assumes that \eqref{a1:in} is satisfied.  
\end{remark}
\begin{remark}
In some applications, it is more relevant to assume that
the (density of the) drag force is $\rho(u-v) f$ (instead of just $(u-v)f$) 
so that the interaction between the particles and the surrounding fluid disappears in 
in vacuum.  More generally, if the drag force is of the form  
$\rho^{\sigma} (u-v)$ {\rm(}$\sigma\geq0${\rm)} then  the dissipation rate from the drag force satisfies
$$\int_{\mathbb{T}^2\times\mathbb{R}^2}\rho^{\sigma}|u-v|^2f\, dxdv\geq \inf_{x\in\mathbb{T}^2}\rho^{\sigma}(t,x) \int_{\mathbb{T}^2\times\mathbb{R}^2}|u-v|^2f\, dxdv.$$
Hence the algebraic decay result stated in Theorem \ref{theorem1:in} remains valid
whenever $\rho_0$ is bounded away from vacuum.\smallbreak
In this context, obtaining an exponential decay result analogous to Theorem \ref{theorem11:in} is considerably more subtle. 
It has been achieved by Choi and Kwon in \cite{choiinhomo} for smooth enough data and no vacuum, 
assuming an a priori bound on $n_f.$ In fact, 
it is not clear that one can have a good control 
of higher order energy functionals (see in particular \eqref{L1in} and \eqref{L2in}) without assuming 
that $\rho$ is Lispchitz. This Lipschitz control seems to be also needed when performing the 
change of variable along the characteristic in velocity (cf. Lemma \ref{lemmaninfty}). 
\end{remark}


\section{
The homogeneous case}\label{sectionhomo}

This section is devoted to the proof of Theorems \ref{theorem1} and \ref{theorem11}. Since the uniqueness of the two-dimensional weak solutions is guaranteed by \cite{hankwanunique}, we focus on the proof
of the existence of solutions that satisfy the desired {asymptotic} properties. 
Constructing these solutions  will be sketched in the last subsection.
For the time being, we focus on the proof of a priori estimates for smooth enough solutions, leading eventually to Inequalities \eqref{behavior1} and  \eqref{behavior11}.  

 Proving \eqref{behavior1} is carried out 
 in the first subsection. 
Next, as a preliminary step for getting   \eqref{behavior11},
 we establish three families of energy estimates for smooth solutions, that  already imply
 exponential {convergence}  of the solutions  {to equilibrium} emanating from initial data such that $\mathcal{H}_0$ and $f_0$ are small enough. 

 To handle the case with large $\mathcal{H}_0,$ the idea is as follows: for any small enough  $\eta,$  Inequality \eqref{behavior1} 
provides us with some  (explicit)  time $t_\eta>1$ such that
$\mathcal{H}(t_\eta)\leq \eta.$ By the same token, it is not difficult to ensure from \eqref{intenergym} that
$u(t_\eta)\in H^1$ and that $\cD(t_\eta)\leq\eta.$  On the time interval $[0,t_\eta]$, we establish  time-dependent upper bounds of $n_f$, $j_f$ and $e_f$. Once $t_\eta$ is fixed, we  take $\|f_0\|_{L^{\infty}_{x,v}}$ small enough  to have
 bounds of $n_f$, $j_f$ and $e_f$  on $[0,t_\eta]$ just in terms of the data.
 Then, the exponential  {convergence} estimates on $[t_\eta,T]$ will be achieved in five steps. The first step that crucially relies on Poincar\'e's inequality 
allows us to get exponential decay of the modulated energy~$\mathcal{H}$ provided $\|n_f\|_{L^\infty}$ is under control. 
This will be combined with our families of energy estimates  to prove 
exponential decay of  the dissipation rate $\cD$ and of $\|\dot u(t)\|_{L^2}$ whenever we have some 
control on   $\|n_f\|_{L^\infty}$ and  $\|e_f\|_{L^\infty}$ and (say):
\begin{equation}\label{eq:Lip}
\int_{t_\eta}^T \|\nabla u\|_{L^\infty}\,dt\leq\frac1{10}\cdotp\end{equation}
  Bounding the  $L^1(t_\eta,T;L^\infty)$ norm of $\nabla u$ in terms of the data
will be achieved in Step 4. 
Then, in the final step, we show that, under \eqref{eq:Lip},  one can bound   $\|n_f\|_{L^\infty},$
$\|j_f\|_{L^\infty}$ and  $\|e_f\|_{L^\infty}$ on $[t_\eta,T]$ in terms of $\|n_f(t_\eta)\|_{L^{\infty}}$, $\|j_f(t_\eta)\|_{L^{\infty}}$ and $\|e_f(t_\eta)\|_{L^{\infty}},$  and prove that \eqref{eq:Lip} is in fact a strict inequality, provided
$\eta$ and $\|f_0\|_{L^\infty_{x,v}}$ are small accordingly.  Then, a final bootstrap allows us to get  \eqref{eq:Lip}
with $T=\infty$, and then all the parts of \eqref{behavior11} pertaining to the velocity 
on $[t_\eta,\infty).$ Specifying the large-time behavior of $n_f$ is done independently, in the last-but-one subsection.

\subsection{Algebraic convergence rate for large data}\label{subsectionalgdecay}

We first show the algebraic decay of the modulated energy $\mathcal{H}$ of general (possibly large)
 weak solutions of System \eqref{homoVNS}.

\begin{prop}\label{propalge}
Let $(u,P,f)$ be a {smooth} global solution to System \eqref{homoVNS}. Then,
 there exists a  constant $c_1>0$ depending only on the fluid domain
 such that for any $t\geq0$ the modulated energy $\mathcal H$ defined in \eqref{eq:modulated} 
 satisfies
\begin{equation}\label{agdecay}
\begin{aligned}
&\mathcal{H}(t)\leq\Biggl( 1+\frac{\cM_0 t}{1+\mathcal{H}_0+\cM_0+\| f_{0}\log f_{0} \|_{L^1_{x,v}}  } \Biggr)^{-\frac{1}{c_1\cM_0}}\mathcal{H}_0.\end{aligned}
\end{equation}
\end{prop}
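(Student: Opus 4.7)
\textbf{Proof proposal for Proposition \ref{propalge}.}

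The plan is to combine three ingredients: (i) the energy identity $\mathcal{H}'(t)=-\mathcal{D}(t)$ obtained from \eqref{intenergym}; (ii) a coercivity estimate bounding $\mathcal{H}$ by the dissipation $\mathcal{D}$ up to a factor depending on the entropy $\|n_f\log n_f\|_{L^1}$; (iii) a linear-in-time growth bound for $\|n_f\log n_f\|_{L^1}$. Combining these gives a differential inequality of the form $\mathcal{H}'\le -c\mathcal{H}/(A_0+C\mathcal{M}_0 t)$, which integrates to the claimed algebraic rate.

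First I would expand the modulated energy by writing $v-\langle j_f\rangle/\langle n_f\rangle=(v-u)+(u-\langle u\rangle)+(\langle u\rangle-\langle j_f\rangle/\langle n_f\rangle)$ and squaring. The definitions of $\langle u\rangle$, $\langle n_f\rangle$, $\langle j_f\rangle$ kill the cross terms of the form $\int(\langle u\rangle-\langle j_f\rangle/\langle n_f\rangle)\cdot(v-u)f\,dxdv$ (since $\int fv\,dv=j_f$ integrates to $\langle j_f\rangle|\T^2|$), which reorganises $\mathcal{H}$ modulo acceptable terms into
\begin{equation*}
\mathcal{H}\ \lesssim\ \|u-\langle u\rangle\|_{L^2}^2+\int|v-u|^2f\,dxdv+\int n_f\,|u-\langle u\rangle|^2\,dx.
\end{equation*}
The Poincar\'e inequality controls the first term by $\|\nabla u\|_{L^2}^2\le\mathcal{D}$, and the second term is exactly part of $\mathcal{D}$. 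The delicate piece is the weighted $L^2$ norm of $u-\langle u\rangle$ against $n_f$; here I would invoke the two-dimensional Moser--Trudinger inequality, which provides $\|e^{c|u-\langle u\rangle|^2/\|\nabla u\|_{L^2}^2}\|_{L^1(\T^2)}\le C$, together with the Orlicz duality $\int gh\lesssim \|g\|_{L\log L}\|h\|_{\exp L}$. This yields
\begin{equation*}
\int n_f\,|u-\langle u\rangle|^2\,dx\ \lesssim\ \bigl(1+\|n_f\log n_f\|_{L^1}\bigr)\|\nabla u\|_{L^2}^2.
\end{equation*}
The resulting coercivity $\mathcal{H}\lesssim(1+\mathcal{H}_0+\mathcal{M}_0+\|n_f\log n_f\|_{L^1})\,\mathcal{D}$ is where I expect the main technical work; in particular, one must absorb the scalar $|\langle u\rangle-\langle j_f\rangle/\langle n_f\rangle|^2$ piece using the same kinetic-dissipation decomposition.

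Next, I would control the entropy $\|n_f\log n_f\|_{L^1}$ by the kinetic entropy $\|f\log f\|_{L^1_{x,v}}$ plus low-order terms: by comparing $f$ to a Gaussian in $v$ (relative entropy $\int f\log(f/\mu_v)\ge 0$) one gets pointwise in $x$
\begin{equation*}
n_f\log n_f\ \le\ \int f\log f\,dv+\tfrac12\!\int|v|^2 f\,dv+C n_f,
\end{equation*}
so integration in $x$ yields $\|n_f\log n_f\|_{L^1}\lesssim\|f\log f\|_{L^1_{x,v}}+\mathcal{E}_0+\mathcal{M}_0$. Then I would test the Vlasov equation against $1+\log f$: using $\div_v(u-v)=-2$, integration by parts gives $\frac{d}{dt}\int f\log f\,dxdv=2\mathcal{M}_0$, whence
\begin{equation*}
\|f\log f\|_{L^1_{x,v}}(t)\ \le\ \|f_0\log f_0\|_{L^1_{x,v}}+2\mathcal{M}_0 t.
\end{equation*}
(The negative part of $f\log f$ is harmlessly controlled by $\|(1+|v|^2)f\|_{L^1_{x,v}}\le\mathcal{M}_0+2\mathcal{E}_0$.)

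Setting $A(t):=1+\mathcal{H}_0+\mathcal{M}_0+\|f_0\log f_0\|_{L^1_{x,v}}+c\mathcal{M}_0 t$, steps (ii)--(iii) combine into
\begin{equation*}
\mathcal{H}'(t)\ =\ -\mathcal{D}(t)\ \le\ -\frac{\mathcal{H}(t)}{c_1\,A(t)}.
\end{equation*}
Dividing by $\mathcal{H}$ and integrating on $[0,t]$ gives
\begin{equation*}
\log\frac{\mathcal{H}(t)}{\mathcal{H}_0}\ \le\ -\frac{1}{c_1\mathcal{M}_0}\log\frac{A(t)}{A(0)},
\end{equation*}
which is exactly \eqref{agdecay} after relabelling constants. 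The main obstacle is step (ii): producing a coercivity inequality sharp enough that the extra factor is precisely $\|n_f\log n_f\|_{L^1}$ (and not, say, a higher Orlicz norm) is what pins the exponent in \eqref{agdecay} to $1/(c_1\mathcal{M}_0)$; everything else is bookkeeping and Gronwall.
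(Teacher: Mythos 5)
Your proposal is correct and follows essentially the same route as the paper: Poincar\'e for the fluid part, absorbing the scalar bulk-velocity discrepancy into the drag dissipation, the Trudinger/Orlicz ($L\log L$--$\exp L$) duality for $\int n_f|u-\langle u\rangle|^2\,dx$, Jensen against a Gaussian plus testing the Vlasov equation with $1+\log f$ to get the linear-in-time entropy bound, and a final Gronwall integration. The only cosmetic difference is that the paper proves the Orlicz duality by hand (splitting on $n_f\lessgtr e^{c|\Psi_u|^2/2}$) and controls the negative part of $n_f\log n_f$ via $-s\log s\le e^{-1}$ on the finite-measure torus rather than via velocity moments of $f$.
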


\begin{proof}
Poincar\'e's inequality gives us:
\begin{equation}\label{fluided}
\begin{aligned}
\int_{\mathbb{T}^2} |u-\langle u\rangle |^2\,dx\leq c_{\mathbb T^2}\|\nabla u\|_{L^2}^2,
\end{aligned}
\end{equation}
where $c_{\mathbb T^2}$ stands for the Poincar\'e  constant of the torus $\mathbb T^2.$ 
\medbreak
Next, to control the term of $\mathcal{H}$ corresponding to the energy of the particles, we make use of the dissipation provided 
by the Brinkman force. In fact, since 
$$\int _{\mathbb{T}^2\times\mathbb{R}^2}\biggl(v-\frac{\langle j_{f}\rangle}{\langle n_{f}\rangle}\biggr) f\, dxdv=0,$$
one has 
\begin{multline}\label{particleed111}
\int _{\mathbb{T}^2\times\mathbb{R}^2} |v-u|^2 f\,dxdv\\
\geq \frac{1}{2} \int _{\mathbb{T}^2\times\mathbb{R}^2} \left|v-\frac{\langle j_{f}\rangle}{\langle n_{f}\rangle}\right|^2 f\, dxdv
+\frac{\|n_f\|_{L^1}}{2}\left|\frac{\langle j_{f}\rangle}{\langle n_{f}\rangle}-\langle u\rangle\right|^2  -3 \int_{\mathbb{T}^2} |u-\langle u\rangle|^2 n_{f}\,dx.
\end{multline}
From  the definition of $\mathcal{H}$ in \eqref{eq:modulated}, \eqref{fluided} and \eqref{particleed111}, we infer that
\begin{equation}\label{particleed}
\begin{aligned}
\mathcal{H}\leq \int _{\mathbb{T}^2\times\mathbb{R}^2} |v-u|^2 f\,dxdv+\frac{c_{\mathbb{T}^2}}{2}\|\nabla u\|_{L^2}^2 +3 \int_{\mathbb{T}^2} |u-\langle u\rangle|^2 n_{f}\,dx.
\end{aligned}
\end{equation}
To estimate the last term, 
we argue as in  \cite{goudonhy1}: 
we write that 
\begin{equation}\label{212}
\begin{aligned}
\int_{\mathbb{T}^2} |u-\langle u\rangle|^2 n_{f}\,dx=\int_{\mathbb{T}^2} |u-\langle u\rangle|^2 n_f\,\mathbb{I}_{0\leq n_{f}\leq 1}\,dx+\int_{\mathbb{T}^2} |u-\langle u\rangle|^2 n_{f}\, \mathbb{I}_{n_{f}\geq 1}\,dx.
\end{aligned}
\end{equation}
The first term in the right-hand side of  \eqref{212} can be bounded as in \eqref{fluided}.
Estimating the last term relies on the following Trudinger inequality 
(see \cite[Page 162]{Trudinger}): 
\begin{equation}
\begin{aligned}
&\int_{\mathbb{T}^2}e^{  c|\Psi_{u}|^2}\,dx\leq K,
\end{aligned}
\end{equation}
where $\Psi_{u}=|u-\langle u\rangle |/\|\nabla u\|_{L^2}$ and $c,K$ are universal positive constants. 
\smallbreak
This guarantees that
\begin{align*}
\int_{\mathbb{T}^2} |u-\langle u&\rangle|^2 n_{f} \mathbb{I}_{n_{f}\geq 1}\,dx\\
&\quad =\|\nabla u\|_{L^2}^2\int_{\mathbb{T}^2} |\Psi_{u}|^2 n_{f} \mathbb{I}_{1\leq n_{f}\leq  e^{\frac{c}{2} |\Psi_{u}|^2} }\,dx+\|\nabla u\|_{L^2}^2\int_{\mathbb{T}^2} |\Psi_{u}|^2 n_{f} \mathbb{I}_{n_{f}\geq e^{\frac{c}{2} |\Psi_{u}|^2}}\,dx\\
&\quad\leq \|\nabla u\|_{L^2}^2\int_{\mathbb{T}^2} |\Psi_{u}|^2 e^{\frac{c}{2} |\Psi_{u}|^2}\,dx+\|\nabla u\|_{L^2}^2\int_{\mathbb{T}^2} |\Psi_{u}|^2 n_{f} \mathbb{I}_{|\Psi_{u}|^2\leq \frac{2}{c}\log n_{f}}\,dx\\
&\quad\leq \frac{2K}{c}\|\nabla u\|_{L^2}^2+\frac{2}{c}\|\nabla u\|_{L^2}^2\int_{\mathbb{T}^2}  n_{f}|\log n_{f}|\,dx.
\end{align*}
Thanks to Lemma \ref{lemmaflogf} with $\bar u=\langle j_f\rangle/\langle n_f\rangle$ and \eqref{eq:modulated}, the last term can be controlled as
follows:
\begin{equation}
\begin{aligned}
\int_{\mathbb{T}^2} n_{f}|\log n_{f}|\,dx&\leq \| f_{0}\log f_{0} \|_{L^1_{x,v}}+(\log(2\pi)+t)\cM_0+2e^{-1}|\mathbb{T}^2|+\mathcal{H}_0.\label{2141}
\end{aligned}
\end{equation}
By \eqref{212}-\eqref{2141} we thus obtain for some constant $c_1$ depending only 
on $\mathbb{T}^2,$
\begin{equation}\label{u2f}
\int_{\mathbb{T}^2} |u-\langle u\rangle|^2 n_{f}\,dx
\leq c_1\bigg(1+\mathcal{H}_0+\| f_{0}\log f_{0} \|_{L^1_{x,v}}+\cM_0  \biggr) \|\nabla u\|_{L^2}^2+2\cM_0 t\|\nabla u\|_{L^2}^2\cdotp
\end{equation}
Putting \eqref{particleed} and \eqref{u2f} together, we conclude that
$$\mathcal{H}\leq c_1\Big( 1+\mathcal{H}_0+\| f_{0}\log f_{0} \|_{L^1_{x,v}}+\cM_0  + \cM_0 t \Big)\cD.$$
Combining with the derivative of \eqref{intenergym}, we get
\begin{align*}
\frac{d}{dt}\mathcal{H}+\frac{1}{c_1\Big(1+\mathcal{H}_0+\| f_{0}\log f_{0} \|_{L^1_{x,v}}+\cM_0  +\cM_0 t \Big)}\mathcal{H}\leq 0.
\end{align*}
This implies that  for all $t\geq0,$
\begin{align*}
\mathcal{H}(t)&\leq {\rm exp}\bigg\{ -\int_{0}^{t} \frac{1}{c_1\Big(1+\mathcal{H}_0+\| f_{0}\log f_{0} \|_{L^1_{x,v}}+\cM_0  +\cM_0 \tau \Big)}\,d\tau \bigg\} \mathcal{H}_0\\
&={\rm exp}\bigg\{ -\frac{1}{c_1\cM_0 } \log\bigg( 1+\frac{\cM_0 t}{1+\mathcal{H}_0+\| f_{0}\log f_{0} \|_{L^1_{x,v}}+\cM_0} \bigg)\bigg\} \mathcal{H}_0,
\end{align*}
whence \eqref{agdecay}.
\end{proof}

Proposition \ref{propalge} implies the convergence of the solution  to its equilibrium state. Indeed, we have the following lemma.

\begin{lemma}\label{lemma51}
Let $(u,P,f)$ be a smooth global solution to System \eqref{homoVNS}. Then for any $t\geq0,$ it holds that
for some $C>0$ depending only on $\T^2,$
\begin{multline}\label{agdecay1}
\frac{{\mathcal{M}_0}}{1+{\mathcal{M}_0}}\|u(t)-u_{\infty}\|_{L^2}^2+\| |v-u_{\infty}|^2{f(t)}\|_{L^1_{x,v}}
\\ \leq C\Bigg( 1+\frac{\cM_0 t}{1+\mathcal{H}_0+\| f_{0}\log f_{0} \|_{L^1_{x,v}}+\cM_0 } \Bigg)^{-\frac{C}{\cM_0}} \mathcal{H}_0.
\end{multline}
\end{lemma}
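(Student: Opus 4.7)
The plan is to deduce Lemma~\ref{lemma51} directly from Proposition~\ref{propalge} by establishing the pointwise-in-time comparison
\[
\frac{\mathcal{M}_0}{1+\mathcal{M}_0}\|u(t)-u_\infty\|_{L^2}^2 + \bigl\||v-u_\infty|^2 f(t)\bigr\|_{L^1_{x,v}} \leq C\,\mathcal{H}(t),
\]
with $C$ depending only on $\mathbb{T}^2$, and then inserting the algebraic decay~\eqref{agdecay} of $\mathcal{H}$.

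For the velocity piece, I would split
\[
\|u-u_\infty\|_{L^2}^2 \leq 2\|u-\langle u\rangle\|_{L^2}^2 + 2|\mathbb{T}^2|\,|\langle u\rangle-u_\infty|^2.
\]
The first summand is controlled by $4\mathcal{H}$ directly from the first term in the definition~\eqref{eq:modulated} of $\mathcal{H}$. For the second, the identity~\eqref{uinftybis} yields $\langle u\rangle - u_\infty = \frac{\langle n_f\rangle}{1+\langle n_f\rangle}\bigl(\langle u\rangle - \langle j_f\rangle/\langle n_f\rangle\bigr)$, while the third term of~\eqref{eq:modulated}, together with $\|n_f\|_{L^1} = \mathcal{M}_0 = |\mathbb{T}^2|\langle n_f\rangle$, bounds $|\langle u\rangle - \langle j_f\rangle/\langle n_f\rangle|^2$ by $2(1+\langle n_f\rangle)\mathcal{H}/\mathcal{M}_0$. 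Combining these produces $|\mathbb{T}^2|\,|\langle u\rangle-u_\infty|^2 \leq C\mathcal{H}$.

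For the particle piece, I decompose $v-u_\infty = (v - \langle j_f\rangle/\langle n_f\rangle) + (\langle j_f\rangle/\langle n_f\rangle - u_\infty)$. The conservation laws~\eqref{mass} give $\langle n_f\rangle = \langle n_{f_0}\rangle$ and $\langle u\rangle + \langle j_f\rangle = \langle u_0\rangle + \langle j_{f_0}\rangle$, from which a short computation yields
\[
\frac{\langle j_f\rangle}{\langle n_f\rangle} - u_\infty = \frac{1}{1+\langle n_f\rangle}\Bigl(\frac{\langle j_f\rangle}{\langle n_f\rangle} - \langle u\rangle\Bigr).
\]
The same third term of $\mathcal{H}$ then bounds $|\langle j_f\rangle/\langle n_f\rangle - u_\infty|^2$ by $2\mathcal{H}/\bigl((1+\langle n_f\rangle)\mathcal{M}_0\bigr)$. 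Integrating against $f$, using the second term of $\mathcal{H}$ for the centered part and $\int f\,dxdv = \mathcal{M}_0$ to absorb the constant shift, gives $\int |v-u_\infty|^2 f\,dxdv \leq C\mathcal{H}$.

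Combining the two estimates, weakening the velocity contribution by the harmless factor $\mathcal{M}_0/(1+\mathcal{M}_0)\leq 1$, and substituting the bound~\eqref{agdecay} for $\mathcal{H}(t)$ from Proposition~\ref{propalge} yields~\eqref{agdecay1}; the exponent $C/\mathcal{M}_0$ in the conclusion is just $1/(c_1\mathcal{M}_0)$ from the proposition. I do not expect a genuine obstacle here, as the entire argument is an algebraic reduction built on the definition of $\mathcal{H}$, the identity~\eqref{uinftybis}, and the conservation laws~\eqref{mass}, with all the Poincar\'e- and Trudinger-type analysis already packaged inside Proposition~\ref{propalge}.
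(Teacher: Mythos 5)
Your argument is correct and follows essentially the same route as the paper's proof of Lemma~\ref{lemma51}: both reduce the left-hand side to $C\mathcal H(t)$ using the three terms of \eqref{eq:modulated}, the conservation laws \eqref{mass}, and the identity relating $\langle u\rangle-\langle j_f\rangle/\langle n_f\rangle$ to $\langle u\rangle-u_\infty$ and $\langle j_f\rangle/\langle n_f\rangle-u_\infty$, before inserting \eqref{agdecay}. The only (harmless) difference is bookkeeping: by routing the velocity term through \eqref{uinftybis} you get $\|u-u_\infty\|_{L^2}^2\le C\mathcal H$ outright, whereas the paper's three-term splitting produces a factor $1+1/\langle n_{f_0}\rangle$ that is then absorbed by the prefactor $\mathcal M_0/(1+\mathcal M_0)$.
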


\begin{proof}
The momentum conservation  \eqref{mass} implies  that
\begin{equation}\nonumber
\begin{aligned}
\langle u\rangle=\langle u_0+j_{f_0}\rangle-\langle j_{f}\rangle.
\end{aligned}
\end{equation}
Recall that $u_{\infty}$ is given by \eqref{uinfty}. Together with the mass conservation of $f$, we have
\begin{equation}\nonumber
\begin{aligned}
&\left| \frac{\langle j_f\rangle}{\langle n_f\rangle}-u_{\infty}\right|=\frac{1 }{1+\langle n_{f}\rangle }\left| \frac{\langle j_{f}\rangle}{\langle n_{f}\rangle} -\langle u\rangle\right|\leq \sqrt{\frac{2\mathcal{H}}{\cM_0 (1+\langle n_{f_0}\rangle)}  }\cdotp
\end{aligned}
\end{equation}
This leads to 
\begin{align*}
\int_{\mathbb{T}^2}\left| u-u_{\infty}\right|^2\,dx& \leq 3 \int_{\mathbb{T}^2}\Biggl(\left| u-\langle u\rangle\right|^2+\Biggl| \langle u\rangle-\frac{\langle j_f\rangle}{\langle n_f\rangle}\Biggr|^2+\Biggl|\frac{\langle j_f\rangle}{\langle n_f\rangle}-u_{\infty}\Biggr|^2\Biggr)\,dx \\
&\leq C\Biggl(1+\frac{1}{\langle n_{f_0}\rangle }\Biggr)\mathcal{H}
\end{align*}
and
\begin{equation}\nonumber
\begin{aligned}
\int_{\mathbb R^2\times\mathbb{T}^2} \left| v-u_{\infty}\right|^2 f\,dxdv&\leq 2\int_{\mathbb R^2\times\mathbb{T}^2} \left| v-\frac{\langle j_f\rangle}{\langle n_f\rangle}\right|^2 f\,dxdv + 2\langle n_{f_0}\rangle \left| \frac{\langle j_f\rangle}{\langle n_f\rangle}-u_{\infty} \right|^2 \leq C\mathcal{H}.
\end{aligned}
\end{equation}
Combining these estimates with \eqref{agdecay}, we arrive at \eqref{agdecay1}.
\end{proof}


\subsection{Higher order Lyapunov functionals}
Following \cite{LSZ1,danchinvns} where a similar approach was used in another context, here we
establish some inequalities involving higher-order norms of $u$. As already explained before, compared to the
aforementioned works, the key novelty is to replace $u_t$ by the material derivative 
$\dot{u}:=u_t+u\cdot\nabla u$ in the inequalities that are stated below:
\begin{lemma}\label{lemma32}
Let $(f,u,P)$ be a smooth solution  to System \eqref{homoVNS}. Let the dissipation rate $\mathcal D$ be  defined in \eqref{enrgD}.
There exists a universal constant $C\geq1$ such that the following holds:
\begin{itemize}
    \item $H^1$-estimates{\rm:} 
\begin{multline}\label{dpes021110000}
 \frac{d}{dt} \|\nabla u\|_{L^2}^2+\|\dot{u}\|_{L^2}^2+\frac{1}{C}(\|\nabla^2u\|_{L^2}^2+\|\nabla P\|_{L^2}^2)
\leq C\|\nabla u\|_{L^2}^4+C\|n_{f}\|_{L^{\infty}} \||u-v|^2f\|_{L^1_{x,v}}
    \end{multline}
    and
\begin{multline}\label{dpes0211100001}
\frac{d}{dt} \cD+\|\dot{u}\|_{L^2}^2+\frac{1}{C}(\|\nabla^2u\|_{L^2}^2+\|\nabla P\|_{L^2}^2)+ \||u-v|^2f\|_{L^1_{x,v}}\\
\leq C\|\nabla u\|_{L^2}^4+C(\|n_{f}\|_{L^{\infty}}+\|\nabla u\|_{L^{\infty}})  \||u-v|^2f\|_{L^1_{x,v}}.
    \end{multline}
        \item {Material derivative estimate\footnote{For any $2\times2$ matrices 
        $A$ and $B,$ we set $A:B=\sum_{i,j}A_{ij}B_{ij}$.}    
        \rm:}
\begin{multline}\label{3111}
 \frac{d}{dt}\int_{\mathbb{T}^2} \Big(|\dot{u}|^2-(P-\langle P\rangle)\nabla u: (\nabla u)^{\top} \Big)\,dx+ \|\nabla \dot{u}\|_{L^2}^2+  \|\sqrt{n_f}\dot{u}\|_{L^2}^2\leq  C\|\nabla u\|_{L^2}^2 \|\dot{u}\|_{L^2}^2\\
+C\Bigl(1+\|e_f\|_{L^{\infty}}+\|n_{f}\|_{L^{\infty}}\bigl(1+ \|u\|_{L^{\infty}}^2\bigr)
+\bigl(\|n_{f}\|_{L^{\infty}}+\|n_{f}\|_{L^{\infty}}^2\bigr)\cD\Bigr) \||u-v|^2f\|_{L^1_{x,v}}.
\end{multline}
\end{itemize}
\end{lemma}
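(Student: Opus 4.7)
The plan is to derive all three estimates by testing the momentum equation by $\dot u$ (for the first two) and, for the last one, by applying the convective derivative $D_t:=\partial_t+u\cdot\nabla$ to the momentum equation and testing by $\dot u$, using throughout the Stokes regularity for
\[
-\Delta u+\nabla(P-\langle P\rangle)=-\dot u-F,\qquad \div u=0,\qquad F:=\int_{\R^2}(u-v)f\,dv,
\]
which yields $\|\nabla^2 u\|_{L^2}+\|\nabla P\|_{L^2}\lesssim\|\dot u\|_{L^2}+\|F\|_{L^2}$, together with the Cauchy--Schwarz bound $\|F\|_{L^2}^2\le\|n_f\|_{L^\infty}\||u-v|^2 f\|_{L^1_{x,v}}$.

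For \eqref{dpes021110000}, I would test $\dot u-\Delta u+\nabla P=-F$ by $\dot u$. Using $\div u_t=0$ and integrating by parts gives
\[
\tfrac12\tfrac{d}{dt}\|\nabla u\|_{L^2}^2+\|\dot u\|_{L^2}^2=\int\partial_k u^i\partial_k u^j\partial_j u^i\,dx-\int(P-\langle P\rangle)\,\partial_i u^j\partial_j u^i\,dx-\int F\cdot\dot u\,dx.
\]
The cubic and pressure--strain terms are estimated by 2D Gagliardo--Nirenberg ($\|\nabla u\|_{L^3}^3\lesssim\|\nabla u\|_{L^2}^2\|\nabla^2 u\|_{L^2}$ and $\|\nabla u\|_{L^4}^2\lesssim\|\nabla u\|_{L^2}\|\nabla^2 u\|_{L^2}$) combined with Poincar\'e $\|P-\langle P\rangle\|_{L^2}\lesssim\|\nabla P\|_{L^2}$, and the Brinkman term by $\|F\|_{L^2}\|\dot u\|_{L^2}$. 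Substituting the Stokes bound and invoking Young's inequality absorbs $\|\nabla^2 u\|_{L^2}^2$, $\|\nabla P\|_{L^2}^2$ and $\|\dot u\|_{L^2}^2$ on the left, with residue $\|\nabla u\|_{L^2}^4$. For \eqref{dpes0211100001} I would separately differentiate $\||u-v|^2 f\|_{L^1_{x,v}}$ using the Vlasov equation: testing against $\phi(t,x,v)=|u(t,x)-v|^2$ and noting $(\partial_t+v\cdot\nabla_x+(u-v)\cdot\nabla_v)\phi=2(u-v)\cdot\dot u-2(u-v)^{\!\top}\nabla u\,(u-v)-2|u-v|^2$ produces
\[
\tfrac{d}{dt}\||u-v|^2 f\|_{L^1_{x,v}}+2\||u-v|^2 f\|_{L^1_{x,v}}\le 2\|\dot u\|_{L^2}\|F\|_{L^2}+2\|\nabla u\|_{L^\infty}\||u-v|^2 f\|_{L^1_{x,v}}.
\]
Adding a suitable multiple of this identity to \eqref{dpes021110000} yields \eqref{dpes0211100001}.

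For the Hoff-type estimate \eqref{3111}, I would apply $D_t$ to $\dot u=\Delta u-\nabla P-F$ and test by $\dot u$. Incompressibility gives $\int D_t\dot u\cdot\dot u\,dx=\tfrac12\tfrac{d}{dt}\|\dot u\|_{L^2}^2$. The Brinkman piece $-\int D_t F\cdot\dot u\,dx$ is the key ingredient: decomposing $F=u n_f-j_f$ and using the local conservation laws $\partial_t n_f+\div j_f=0$ and $\partial_t j_f+\div\!\int v\otimes v\,f\,dv=F$ derived from Vlasov, one isolates $-\int n_f|\dot u|^2\,dx$, which after transfer to the left gives $\|\sqrt{n_f}\dot u\|_{L^2}^2$; the remaining error terms are bounded by $\bigl(\|n_f\|_{L^\infty}(1+\|u\|_{L^\infty}^2)+\|e_f\|_{L^\infty}\bigr)\||u-v|^2 f\|_{L^1_{x,v}}$. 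The viscous term produces $\|\nabla\dot u\|_{L^2}^2$ along with commutators such as $\int\partial_j u^k\partial_k u^i\partial_j\dot u^i\,dx$, bounded using $\|\nabla u\|_{L^4}^2\lesssim\|\nabla u\|_{L^2}\|\nabla^2 u\|_{L^2}$ and Stokes to yield the residue $\|\nabla u\|_{L^2}^2\|\dot u\|_{L^2}^2+(\|n_f\|_{L^\infty}+\|n_f\|_{L^\infty}^2)\cD\,\||u-v|^2 f\|_{L^1_{x,v}}$. The pressure term $-\int D_t\nabla P\cdot\dot u\,dx$ is integrated by parts using $\div\dot u=\partial_i u^j\partial_j u^i$: the $P_t$ contribution yields the exact time derivative $-\tfrac{d}{dt}\!\int(P-\langle P\rangle)\,\partial_i u^j\partial_j u^i\,dx$, moved to the left, while $u\cdot\nabla\nabla P$ produces further commutators controlled by Gagliardo--Nirenberg and Stokes regularity.

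The main obstacle is the pressure in this last step: because of the bad 2D embedding $H^1\hookrightarrow\mathrm{BMO}$, $\|P\|_{L^\infty}$ is unavailable, so closure is only possible by keeping the exact-derivative structure $\tfrac{d}{dt}\!\int(P-\langle P\rangle)\partial_i u^j\partial_j u^i\,dx$, which is precisely what motivates the modified energy on the left of \eqref{3111}. A secondary subtlety, absent from the whole-space analyses of \cite{danchinvns,LSZ1}, is that $\langle u\rangle$ need not vanish, which forces one to work with $\dot u$ instead of $u_t$ and with $P-\langle P\rangle$ instead of $P$ throughout, so that Poincar\'e and the Stokes estimate remain available without acquiring a linear-in-time cost on $[0,t_\eta]$.
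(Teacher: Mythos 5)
Your overall architecture coincides with the paper's (test the momentum equation by $\dot u$, use the Vlasov equation to produce the evolution of $\||u-v|^2f\|_{L^1_{x,v}}$, apply $\partial_t+u\cdot\nabla$ and test by $\dot u$, and keep $\frac{d}{dt}\int(P-\langle P\rangle)\nabla u:\nabla u\,dx$ as an exact derivative), but there is a genuine gap in your treatment of the pressure--velocity coupling. For \eqref{dpes021110000} you bound the pressure--strain term by H\"older, Poincar\'e and Gagliardo--Nirenberg, i.e.
\begin{equation*}
\Bigl|\int_{\T^2}(P-\langle P\rangle)\,\partial_iu^j\partial_ju^i\,dx\Bigr|\le \|P-\langle P\rangle\|_{L^2}\|\nabla u\|_{L^4}^2\lesssim \|\nabla P\|_{L^2}\,\|\nabla u\|_{L^2}\,\|\nabla^2u\|_{L^2}.
\end{equation*}
This is quadratic in the two quantities $\|\nabla P\|_{L^2},\|\nabla^2u\|_{L^2}$ that must be absorbed into the left-hand side and only linear in $\|\nabla u\|_{L^2}$, so Young's inequality leaves a residue of the form $\|\nabla u\|_{L^2}^2\|\nabla^2u\|_{L^2}^2$ (equivalently, after the Stokes bound, $\|\nabla u\|_{L^2}\|\dot u\|_{L^2}^2$), which can be reabsorbed only if $\|\nabla u\|_{L^2}$ is small --- precisely what the theorems refuse to assume. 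The paper's fix is compensated compactness: since $\div\,\partial_iu=0$ and $\nabla\times\nabla u^i=0$, the div--curl lemma gives $\|\partial_iu\cdot\nabla u^i\|_{\mathcal{H}^1}\lesssim\|\nabla u\|_{L^2}^2$, and pairing with $\|P-\langle P\rangle\|_{{\rm BMO}}\lesssim\|\nabla P\|_{L^2}$ yields $\|\nabla P\|_{L^2}\|\nabla u\|_{L^2}^2$, whose Young splitting leaves only the admissible residue $\|\nabla u\|_{L^2}^4$. You invoke the BMO obstruction for the pressure in \eqref{3111} but not here, where it is equally indispensable.

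The same issue recurs in the commutators of the material-derivative estimate: after extracting the exact derivative one is left with $\int(P-\langle P\rangle)(\partial_j\dot u^i\partial_iu^j+\partial_ju^i\partial_i\dot u^j)\,dx$ plus the genuinely cubic term $\int(P-\langle P\rangle)\,{\rm Tr}\,(\nabla u)^3\,dx$. The former again needs Hardy--BMO duality (a naive $L^4$--$L^2$--$L^4$ bound gives $\|\nabla P\|_{L^2}\|\nabla\dot u\|_{L^2}\|\nabla u\|_{L^2}^{1/2}\|\nabla^2u\|_{L^2}^{1/2}$, which does not close for large data), and the latter is not controlled by ``Gagliardo--Nirenberg and Stokes regularity'' at all: it is eliminated by the algebraic identity ${\rm Tr}\,A^3=({\rm Tr}\,A)^3-3\,{\rm Tr}\,A\,{\rm Det}\,A$, which makes it vanish because ${\rm Tr}\,\nabla u=\div u=0$ for $2\times2$ matrices. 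These two devices --- the div--curl/Hardy--BMO duality and the trace cancellation --- are exactly the ingredients the paper singles out as the new 2D large-data difficulties, and your proposal omits both.
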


\begin{proof}
Taking the inner product of $\eqref{homoVNS}_{2}$ with $\dot{u}$ yields
\begin{equation}\label{ddtnablauhomo}
\begin{aligned}
\int_{\mathbb{T}^2}  |\dot{u}|^2\, dx=\int_{\mathbb{T}^2}( \Delta u\cdot \dot{u}-\nabla P\cdot \dot{u})\,dx- \int_{\mathbb{T}^2\times\mathbb{R}^2} \dot{u} \cdot(u-v)f\,dxdv.
\end{aligned}
\end{equation}
One observes that
\begin{align*}
 \int_{\mathbb{T}^2}\Delta u\cdot \dot{u}\, dx&= \int_{\mathbb{T}^2}\Delta u\cdot u_t\, dx+\int_{\mathbb{T}^2} \Delta u\cdot (u\cdot \nabla u)\, dx\\
&=- \frac{1}{2}\frac{d}{dt}\|\nabla u\|_{L^2}^2-\sum_{i,j,k=1,2}\int_{\mathbb{T}^2} \partial_{i}u^j \partial_k u^i\partial_{k} u^j\,dx.
\end{align*}
The Gagliardo-Nirenberg inequality gives directly
 \begin{equation}\label{ddtnablauhomo2}
\sum_{i,j,k=1,2}\int_{\mathbb{T}^2} \partial_{i}u^j \partial_k u^i\partial_{k} u^j\,dx\leq \|\nabla u\|_{L^3}^3\leq C\|\nabla u\|_{L^2}^2 \|\nabla^2 u\|_{L^2}.\end{equation}
Next, it holds that
 \begin{equation}\label{eq:nablaP}
-\int_{\mathbb{T}^2}\nabla P\cdot \dot{u}\,dx=\sum_{i=1,2}\int_{\mathbb{T}^2} 
(P-\langle P\rangle) \partial_{i} u\cdot \nabla u^{i}\, dx\leq \|P-\langle P\rangle\|_{{\rm BMO}}\sum_{i=1,2}\|\partial_{i} u\cdot \nabla u^{i}\|_{\mathcal{H}^1},
\end{equation}
where we used $\div u=0$ and the duality between the Hardy space $\mathcal{H}^1 $ and the ${\rm BMO} $ space. 
Leveraging  $\div \partial_{i} u=0,$   $ \nabla\times \nabla u^{i}=0 $ and Lemma \ref{lemmaBMO}, we end up with
 \begin{equation}\label{ddtnablauhomo3}
-\int_{\mathbb{T}^2}\nabla P\cdot \dot{u}\,dx\leq C\|\nabla P\|_{L^2}\|\nabla u\|_{L^2}^2.
\end{equation}
From Young’s inequality, we directly deduce that
\begin{equation}\label{ddtnablauhomo4}
\begin{aligned}
\left| \int_{\mathbb{T}^2\times\mathbb{R}^2} \dot{u} \cdot(u-v)f\,dxdv\right|&\leq \frac{1}{4}\|\dot{u}\|_{L^2}^2+\bigg\|\int_{\mathbb{R}^2}(u-v)f\, dv\bigg\|_{L^2}^2\cdotp
\end{aligned}
\end{equation}
Fix some positive parameter $\beta_0$ to be chosen later. Then, collecting 
\eqref{ddtnablauhomo}--\eqref{ddtnablauhomo4} and using Young's inequality, we obtain
 \begin{equation}\label{32}
\frac{d}{dt} \|\nabla u\|_{L^2}^2+\frac{3}{2}\|\dot{u}\|_{L^2}^2\\
\leq \beta_0\|\nabla^2 u\|_{L^2}^2+C_{\beta_0}\biggl(\|\nabla u\|_{L^2}^4+\|\nabla P\|_{L^2}^2+\bigg\|\int_{\mathbb{R}^2}(u-v)f\, dv\bigg\|_{L^2}^2\biggr)\cdotp
\end{equation}
To bound $\nabla^2 u$ and $\nabla P,$ we rewrite  $\eqref{homoVNS}_2$ as the following Stokes system:
 \begin{equation}\label{eq:stokes}
  -\Delta u+\nabla P=-\dot{u}-\int_{\mathbb{R}^2}  (u-v)f\,dv,
  \qquad\div u=0.
 \end{equation}
Note that Lemma \ref{lemmastokes} implies that
 \begin{equation*}
 \|\nabla^2 u\|_{L^2}^2+\|\nabla P\|_{L^2}^2\leq  C\|\dot{u}\|_{L^2}^2+C\bigg\|\int_{\mathbb{R}^2}(u-v)f\, dv\bigg\|_{L^2}^2
 \end{equation*}
 and, owing to the Cauchy-Schwarz inequality and to the definition of $n_f$,  we have
 \begin{equation}\label{nabladragL2}
 \bigg\|\int_{\mathbb{R}^2}(u-v)f\, dv\bigg\|_{L^2}^2\leq \|n_f\|_{L^{\infty}} \int_{\mathbb{T}^2\times\mathbb{R}^2} |u-v|^2f\, dxdv.
\end{equation}
Hence 
 \begin{equation}\label{nablaunablaPL21}
 \|\nabla^2 u\|_{L^2}^2+\|\nabla P\|_{L^2}^2\leq C\|\dot{u}\|_{L^2}^2+
 C\|n_f\|_{L^{\infty}} \int_{\mathbb{T}^2\times\mathbb{R}^2} |u-v|^2f\, dxdv.
 \end{equation}
 Combining with \eqref{32} and choosing $\beta_0$ small enough yields the desired inequality \eqref{dpes021110000}.
\medbreak
To get \eqref{dpes0211100001}, the only difference compared to \eqref{dpes021110000} is the treatment of the last term 
in  \eqref{ddtnablauhomo}. It follows from $\eqref{homoVNS}_{1}$ that
\begin{align*}
- \int_{\mathbb{T}^2\times\mathbb{R}^2} \dot{u} \cdot(u-v)f\,dxdv
=-\frac{ 1}{2}\frac{d}{dt}& \int_{\mathbb{T}^2\times\mathbb{R}^2} (|u|^2-2u\cdot v) f\, dxdv+\frac{1}{2}\int_{\mathbb{T}^2\times\mathbb{R}^2} (|u|^2-2u\cdot v)f_{t}\,dxdv\\
&-  \int_{\mathbb{T}^2\times\mathbb{R}^2}  (u\cdot \nabla u)\cdot (u-v)f\, dxdv\\
=-\frac{1}{2}\frac{d}{dt}& \!\int_{\mathbb{T}^2\times\mathbb{R}^2}\! (|u|^2-2u\cdot v) f\, dxdv+ \!\int_{\mathbb{T}^2\times\mathbb{R}^2} (v\cdot\nabla u)\cdot (u-v)f\, dxdv\\
&-\int_{\mathbb{T}^2\times\mathbb{R}^2} u\cdot(u-v)f\, dxdv-  \int_{\mathbb{T}^2\times\mathbb{R}^2}  (u\cdot \nabla u)\cdot (u-v)f\, dxdv\\
=-\frac{1}{2}\frac{d}{dt}& \int_{\mathbb{T}^2\times\mathbb{R}^2} |u-v|^2f\, dxdv-\int_{\mathbb{T}^2\times\mathbb{R}^2} |u-v|^2f\, dxdv\\
&+\int_{\mathbb{T}^2\times\mathbb{R}^2} ((v-u)\cdot\nabla u)\cdot (u-v)f\, dxdv,
\end{align*}
where we used that the energy of the Vlasov equation $\eqref{homoVNS}_{1}$ satisfies
\begin{equation}
\begin{aligned}
&\frac{1}{2}\frac{d}{dt}\int_{\mathbb{T}^2\times\mathbb{R}^2} |v|^2 f\,dxdv= \int_{\mathbb{T}^2\times\mathbb{R}^2} v\cdot(u-v)f\, dxdv.\nonumber
\end{aligned}\end{equation}
Note that we have
$$\int_{\mathbb{T}^2\times\mathbb{R}^2} ((v-u)\cdot\nabla u)\cdot (u-v)f\, dxdv\leq \|\nabla u\|_{L^\infty}
\int_{\mathbb{T}^2\times\mathbb{R}^2}|u-v|^2f\,dxdv,$$
which allows to get \eqref{dpes0211100001}.
\medbreak
To prove Inequality \eqref{3111}, let us apply  $\partial_t+u\cdot \nabla $ to $\eqref{homoVNS}_2^{j}$ with $j=1,2.$ Then, using the equation $\eqref{homoVNS}_{1}$ yields
\begin{multline}\label{rhorutt}
 \partial_{t}\dot{u}^{j}- \Delta \dot{u}^{j} + n_{f} \dot{u}^{j}=-  
\partial_{i}( (\partial_{i}u\cdot\nabla) \nabla u^{j})- \div (\partial_{i}u \partial_{i}u^{j})\\
-\partial_{j}\partial_{t}P-(u\cdot\nabla )\partial_{j}P-\int_{\mathbb{R}^2}(u^j-v^j)(u-v)\cdot \nabla_{x}f\, dv+\int_{\mathbb{R}^2}(u^j-v^j)f\,dv.\end{multline}
Multiplying \eqref{rhorutt} by $\dot{u}^{j}$ and summing over $j=1,2$, we deduce after integration by parts that
\begin{equation}\label{311}
\begin{aligned}
&\frac{1}{2}\frac{d}{dt}\int_{\mathbb{T}^2} |\dot{u}|^2\,dx+ \int_{\mathbb{T}^2}|\nabla \dot{u}|^2\,dx+ \int_{\mathbb{T}^2} n_f|\dot{u}|^2\,dx=\sum_{i=1}^{5}I_{i},
\end{aligned}
\end{equation}
with
\begin{equation}\nonumber
\begin{aligned} 
&I_{1}=-  \sum_{i,j=1,2}\int_{\mathbb{T}^2}(\partial_{i}(\partial_{i}u\cdot \nabla u^{j})+\div (\partial_{i}u \partial_{i}u^{j}))\dot{u}^j \, dx,\\
&I_{2}=\int_{\mathbb{T}^2\times\mathbb{R}^2}(u-v)f\cdot \dot{u}\, dxdv,\\
&I_{3}=\int_{\mathbb{T}^2\times\mathbb{R}^2}(u-v)f \cdot (\nabla u\cdot \dot{u})\, dxdv,\\
&I_4=\int_{\mathbb{T}^2\times\mathbb{R}^2}(u-v)f\cdot (\nabla \dot{u}\cdot (u-v)) \,dxdv,\\
&I_{5}=-\sum_{j=1,2}\int_{\mathbb{T}^2}(\partial_{j}\partial_{t}P+(u\cdot\nabla )\partial_{j}P) \dot{u}^j\, dx.
\end{aligned}
\end{equation}
Integrating by parts, then taking advantage of  Gagliardo-Nirenberg and Young inequalities yields
\begin{equation*}
I_{1}\leq 2 \|\nabla u\|_{L^4}^2 \|\nabla \dot{u}\|_{L^2}\leq C\|\nabla u\|_{L^2}\|\nabla^2u\|_{L^2} \|\nabla \dot{u}\|_{L^2}\leq \frac{ 1}{4}\|\nabla \dot{u}\|_{L^2}^2+C\|\nabla u\|_{L^2}^2 \|\nabla^2u\|_{L^2}^2.
\end{equation*}
Hence, Inequality \eqref{nablaunablaPL21} allows to get
\begin{equation}\label{eq:I1} 
I_{1} \leq \frac{ 1}{4}\|\nabla \dot{u}\|_{L^2}^2+C\|\nabla u\|_{L^2}^2 \|\dot u\|_{L^2}^2
+C\|n_f\|_{L^\infty}\|\nabla u\|_{L^2}^2\int_{\mathbb{T}^2\times\mathbb{R}^2}|u-v|^2f\,dxdv.
\end{equation}
Keeping in mind  $\|\sqrt{n_{f}}\dot{u}\|_{L^2}^2$ in the left-hand side of \eqref{311}, we bound $I_2$ as follows:
$$I_{2}\leq \frac{1}{2}\|\sqrt{n_{f}}\dot{u}\|_{L^2}^2+ \frac12\int_{\mathbb{T}^2\times\mathbb{R}^2}|u-v|^2f\, dxdv.$$
Next, due to H\"older, Gagliardo-Nirenberg  and Sobolev inequalities, and to \eqref{nabladragL2}, $I_{3}$ can be controlled by
 \begin{equation*}
\begin{aligned} 
I_{3}&\leq \Bigl(\|\dot{u}-\langle \dot u\rangle\|_{L^4} \|\nabla u\|_{L^4}+|\langle \dot u\rangle|\|\nabla u\|_{L^2}\Bigr)
\left\|\int_{\mathbb{R}^2}(u-v)f\,dv\right\|_{L^2}\\
&\leq C\Bigl(\|\nabla \dot{u}\|_{L^2} \|\nabla u\|_{L^4}
+\|\dot u\|_{L^2}\|\nabla u\|_{L^2}\Bigr)
\left\|\int_{\mathbb{R}^2}(u-v)f\,dv\right\|_{L^2}\\
&\leq C\Bigl(\|\nabla \dot{u}\|_{L^2} \|\nabla u\|_{L^2}^{\frac{1}{2}}\|\nabla^2u\|_{L^2}^{\frac{1}{2}}+\|\dot u\|_{L^2}\|\nabla u\|_{L^2}\Bigr)
\|n_{f}\|_{L^{\infty}}^{\frac{1}{2}}\left(\int_{\mathbb{T}^2\times\mathbb{R}^2}|u-v|^2f\,dxdv\right)^{\frac{1}{2}}\\
&\leq \frac{1}{8} \|\nabla \dot{u}\|_{L^2}^2+C\|\nabla u\|_{L^2}^2\|\nabla^2 u\|_{L^2}^2
+C\|\dot u\|_{L^2}^2\|\nabla u\|_{L^2}^2
+C\|n_{f}\|_{L^{\infty}}\int_{\mathbb{T}^2\times\mathbb{R}^2}|u-v|^2f\,dxdv
\\&\hspace{8cm}+C\|n_{f}\|_{L^{\infty}}^2\left(\int_{\mathbb{T}^2\times\mathbb{R}^2}|u-v|^2f\,dxdv\right)^{2}\cdotp
\end{aligned}
\end{equation*}
Together with \eqref{nablaunablaPL21}, this yields
\begin{multline*}
I_{3}\leq  \frac{1}{8} \|\nabla \dot{u}\|_{L^2}^2+C\|\nabla u\|_{L^2}^2 \|\dot{u}\|_{L^2}^2
\\+C\|n_{f}\|_{L^{\infty}}\int_{\mathbb{T}^2\times\mathbb{R}^2}|u-v|^2f\,dxdv+C( \|n_{f}\|_{L^{\infty}}+\|n_{f}\|_{L^{\infty}}^2)\cD\int_{\mathbb{T}^2\times\mathbb{R}^2}|u-v|^2f\,dxdv.
\end{multline*}
To bound $I_4,$ it suffices to observe that
\begin{align*} 
I_4&\leq  \int_{\mathbb{T}^2} |\nabla\dot{u}| \bigg(\int_{\mathbb{R}^2} |u-v|^2f\,dv\biggr)^{\frac{1}{2}} 
\biggl( |n_{f}|^{\frac{1}{2}} |u|+|e_f|^{\frac{1}{2}}\biggr)\,dx\\
&\leq \frac{1}{8}\|\nabla \dot{u}\|_{L^2}^2+C(\|n_f\|_{L^{\infty}}\|u\|_{L^{\infty}}^2+\|e_f\|_{L^{\infty}})\int_{\mathbb{T}^2\times\mathbb{R}^2} |u-v|^2f\,dxdv.\end{align*}
The term $I_5$ requires a more complex computation. By $\div \partial_{i}u=0$, one has
\begin{equation}\nonumber
\begin{aligned} 
I_{5}&=\int_{\mathbb{T}^2} (P_t+u\cdot\nabla P) \div \dot{u}\,dx+\sum_{j=1,2}\int_{\mathbb{T}^2}\partial_{j}u^i \partial_{i} P\: \dot{u}^j\,dx\\
&=\sum_{j=1,2} \int_{\mathbb{T}^2} P_t \partial_{j}u^i \partial_{i}u^j\,dx+\sum_{i,j,k=1,2}\int_{\mathbb{T}^2}(\partial_{j}u^i \partial_{i} P \dot{u}^j-(P-\langle P\rangle) u^k \partial_{k}(\partial_{j}u^i \partial_{i}u^j))\,dx\\
&=\frac{d}{dt} \sum_{i,j=1,2} \int_{\mathbb{T}^2}( P-\langle P\rangle)  \partial_{j}u^i \partial_{i}u^j\,dx-\sum_{i,j=1,2}\int_{\mathbb{T}^2}(P-\langle P\rangle) \partial_{t}(\partial_{j} u^i \partial_{i}u^j)\,dx\\
&\qquad\qquad\qquad +\sum_{i,j,k=1,2}\int_{\mathbb{T}^2}\Big(\partial_{j}u^i \partial_{i} P \:\dot{u}^j-(P-\langle P\rangle)( u^k \partial_{k}\partial_{j}u^i \partial_{i}u^j+ u^k \partial_{j}u^i \partial_{k}\partial_{i}u^j)\Big)\,dx\\
&=\frac{d}{dt} \int_{\mathbb{T}^2}  (P\!-\!\langle P\rangle)  \nabla u: (\nabla u)^{\top} \,dx+\sum_{i,j=1,2}\int_{\mathbb{T}^2} \Big(\partial_{j}u^i \partial_{i} P \:\dot{u}^j  -(P\!-\!\langle P\rangle)(\partial_{j}\dot{u}^i \partial_{i}u^j\!+\!\partial_{j} u^i \partial_{i} \dot{u}^j ) \Big)\,dx\\
&\hspace{4cm}+2\sum_{i,j,k=1,2} \int_{\mathbb{T}^2}  (P-\langle P\rangle) \partial_{i} u^k \partial_{k} u^j \partial_{j}u^i\,dx.
\end{aligned}
\end{equation}
Taking advantage of  the duality between  ${\rm{BMO}} $ and $\mathcal{H}^1 $,  of Lemma \ref{lemmaBMO} as well as of \eqref{nablaunablaPL21} and of the fact that $\div u=0,$ we discover that
\begin{align*} 
\Bigg|\sum_{i,j=1,2}\int_{\mathbb{T}^2}
\!  \Big(\partial_{j}u^i \partial_{i} P \:\dot{u}^j 
  & -(P-\langle P\rangle)(\partial_{j}\dot{u}^i \partial_{i}u^j\!+\!\partial_{j} u^i \partial_{i} \dot{u}^j ) \Big)\,dx
  \Bigg|
  \\
& \leq \!C\sum_{j=1,2}\biggl(\|\dot{u}^j-\langle\dot u^j\rangle\|_{{\rm{BMO}}}\|\partial_{j}u\cdot\nabla P\|_{\mathcal{H}^1}
\!+\!\|P-\langle P\rangle\|_{{\rm{BMO}}}\|\partial_{j} u\cdot \nabla  \dot{u}^j\|_{\mathcal{H}^1}\biggr)\\
&\leq C\|\nabla\dot{u}\|_{L^2}\|\nabla P\|_{L^2}\|\nabla u\|_{L^2}\\
& \leq \frac{1}{8} \|\nabla\dot{u}\|_{L^2}^2+C \|\nabla u\|_{L^2}^2 \|\nabla P\|_{L^2}^2\\
& \leq \frac{1}{8} \|\nabla\dot{u}\|_{L^2}^2+C \|\nabla u\|_{L^2}^2 \|\dot{u}\|_{L^2}^2+C\|\nabla u\|_{L^2}^2 \|n_{f}\|_{L^{\infty}}\int_{\mathbb{T}^2\times\mathbb{R}^2}|u-v|^2f\,dxdv.
\end{align*}
Now, note that any $2\times 2$ matrix $A$ has the property
$$
{\rm Tr}\, A^3=({\rm Tr}\,A)^3-3 {\rm Tr}\, A \, {\rm Det}\, A.
$$
Hence, since ${\rm Tr}\,\nabla u=\div u=0$, one obtains
\begin{equation*}\sum_{i,j,k=1,2}\int_{\mathbb{T}^2} (P-\langle P\rangle) \partial_{i} u^k \partial_{k} u^j \partial_{j}u^i\,dx=\int_{\mathbb{T}^2} (P-\langle P\rangle) {\rm Tr}\, (\nabla u)^3\,dx=0.
\end{equation*}
Plugging the above estimates of $I_i$ ($i=1,...,5$) into \eqref{311} completes the proof of  \eqref{3111}.
\end{proof}



\subsection{Time-dependent regularity estimates}\label{subsectionlarge}

The following lemma provides us with time-dependent estimates for  $n_f,$  $j_f$ and $e_f$  as well as a control of $\|\nabla u(t)\|_{H^1}$ and $\|\nabla P\|_{L^2}$ for positive times, for (just) finite energy solutions of \eqref{homoVNS}.
It will be needed for proving Theorem \ref{theorem11}.

\begin{lemma}\label{c:ineq}
 Let $q>4$. For any given time $T>0$, it holds that 
  \begin{align}
& \sup_{t\in[0,T]}(\|n_f(t)\|_{L^{\infty}}+\|f(t)\|_{L^1_v(L^{\infty}_x)})\leq N_T,\label{nfT}\\
&\sup_{t\in[0,T]}(\|j_f(t)\|_{L^{\infty}}+\|vf(t)\|_{L^1_v(L^{\infty}_x)})\leq J_T,\label{JfT}\\
&\sup_{t\in[0,T]}(\|e_f(t)\|_{L^{\infty}}+\|v^2f(t)\|_{L^1_v(L^{\infty}_x)})\leq E_T,\label{efT}
 \end{align}
 where 
 \begin{equation}\label{eq:NJE}
 \left\{\begin{aligned}
     N_T&=\||v-u_\infty|^3f_0\|_{L^\infty_{x,v}}+e^{C T} e^{C\mathcal{H}_0}(1+\|f_0\|_{L^{\infty}_{x,v}}^3)\|f_0\|_{L^{\infty}_{x,v}},\\
     J_T&=\| |v-u_\infty|^4 f_0\|_{L^{\infty}_{x,v}}+|u_\infty| N_T+e^{C T} e^{C\mathcal{H}_0}(1+N_T)\|f_0\|_{L^{\infty}_{x,v}},\\
     E_T&=\| |v-u_\infty|^q f_0\|_{L^{\infty}_{x,v}}+|u_\infty|^2 N_T+e^{C T} e^{C\mathcal{H}_0}(1+N_T^{q/4})\|f_0\|_{L^{\infty}_{x,v}}.
 \end{aligned}\right.\end{equation}
 Furthermore, we have
 \begin{align}
  &\sup_{t\in[0,T]} \||u(t)-v|^2f(t)\|_{L^1_{x,v}}\leq C(1+N_T)\mathcal{H}_0, \label{Coro:1}\\
 &\sup_{t\in[0,T]}t \|\nabla u(t)\|_{L^2}^2+C^{-1}\int_0^Tt\|(\dot u,\nabla^2u,\nabla P)\|_{L^2}^2 \, dt \leq \bigl(1+C N_T\bigr) \mathcal{H}_0\: e^{C\mathcal{H}_0},\label{Coro:0}\\
&\sup_{t\in[0,T]}t^2\Big(\|\dot{u}(t)\|_{L^2}^2+\|\nabla^2 u(t)\|_{L^2}^2+\|\nabla P(t)\|_{L^2}^2\Big)+\int_0^Tt^2\|(\nabla \dot u,\sqrt{n_f}\dot{u})\|_{L^2}^2 \, dt \leq C_{T}.\label{Coro:2}
\end{align}
Here $C\geq 1$ depends only on $\T^2,$ and $C_{T}>0$ depends on $T$ and on the initial data.
\end{lemma}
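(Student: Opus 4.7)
The plan is to combine characteristics for the Vlasov equation with the higher-order energy estimates of Lemma \ref{lemma32}, weighted by suitable powers of $t$.

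\emph{Moment bounds for $f$.} Introduce the forward flow $(X_t, V_t)(x, v)$ associated with $\eqref{homoVNS}_1$, solving $\dot X = V$, $\dot V = u(t, X) - V$, so that $f(t, X_t, V_t) = e^{2t} f_0(x, v)$ since ${\rm div}_v(u - v) = -2$. The linear drag in $v$ yields the explicit formula
\begin{equation*}
V_0 - u_\infty = e^t(V_t - u_\infty) - \int_0^t e^s \bigl(u(s, X_s) - u_\infty\bigr) \, ds.
\end{equation*}
At fixed $y$ with $w := V_t$, this gives $|V_0 - u_\infty| \ge e^t |w - u_\infty| - \Lambda(t)$ with $\Lambda(t) := \int_0^t e^s \|u(s) - u_\infty\|_{L^\infty_x} ds$. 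Since the 2D embedding $H^1 \hookrightarrow L^\infty$ fails, I would control $\|u - u_\infty\|_{L^\infty}$ by a Brezis--Gallou\"et type logarithmic interpolation combined with the dissipation $\int_0^T \|\nabla u\|_{L^2}^2 dt \le 2\cH_0$, closing via Gr\"onwall to obtain $\Lambda(t) \lesssim e^{Ct} e^{C\cH_0}$. A change of velocity variable $w \mapsto V_0$ (as in Lemma \ref{lemmaninfty}) of Jacobian $e^{2t}$, combined with the weight $|v - u_\infty|^3 f_0 \in L^\infty$, then yields
\begin{equation*}
n_f(t, y) \le \int_{\mathbb{R}^2} \frac{\|(1 + |v - u_\infty|^3) f_0\|_{L^\infty_{x,v}}}{1 + |V_0 - u_\infty|^3} \, dw,
\end{equation*}
which is finite (since $3 > 2$) and bounded by the announced $N_T$. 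Analogous computations with weights $|v - u_\infty|^4$ (for $j_f$) and $|v - u_\infty|^q$, $q > 4$ (for $e_f$) produce $J_T$ and $E_T$; the $|u_\infty| N_T$ and $|u_\infty|^2 N_T$ terms arise when converting $w$-moments back to $v$-moments.

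\emph{Energy-level bounds.} For \eqref{Coro:1}, I split
\begin{equation*}
\int_{\mathbb{T}^2 \times \mathbb{R}^2} |u - v|^2 f \, dx \, dv \le 2 \|u\|_{L^2}^2 \|n_f\|_{L^\infty} + 2 \int_{\mathbb{T}^2 \times \mathbb{R}^2} |v|^2 f \, dx \, dv
\end{equation*}
and use the modulated energy identity \eqref{intenergym} together with the conservation laws \eqref{mass} (and Poincar\'e's inequality as in Proposition \ref{propalge}) to bound both $\|u\|_{L^2}^2$ and $\int |v|^2 f$ by $C\cH_0$, then invoke \eqref{nfT}. For \eqref{Coro:0}, I multiply \eqref{dpes021110000} by $t$, integrate on $[0, T]$, use the energy dissipation $\int_0^T \|\nabla u\|_{L^2}^2 dt \le 2\cH_0$ to absorb the quartic term $\|\nabla u\|_{L^2}^4 = \|\nabla u\|_{L^2}^2 \cdot \|\nabla u\|_{L^2}^2$ via Gr\"onwall (producing the $e^{C\cH_0}$ factor), and bound the Brinkman source by $\|n_f\|_{L^\infty} \le N_T$ together with \eqref{Coro:1}.

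\emph{Second-order estimate and main obstacle.} To obtain \eqref{Coro:2}, I multiply \eqref{3111} by $t^2$ and integrate, absorbing the term $\int (P - \langle P \rangle) \nabla u : \nabla u \, dx$ via the BMO--$\mathcal{H}^1$ duality and \eqref{nablaunablaPL21} as in Lemma \ref{lemma32}, and bounding the right-hand side through $N_T$, $J_T$, $E_T$ and the weighted $H^1$ estimate \eqref{Coro:0}; the $\|\sqrt{n_f}\dot u\|_{L^2}^2$ left-hand term is used to close the terms involving $\|n_f\|_{L^\infty}\|u\|_{L^\infty}^2$ after a Sobolev embedding once the previous steps give uniform control. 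The main obstacle is the first step: the 2D failure of $H^1 \hookrightarrow L^\infty$ prevents a direct characteristic bound, and extracting the explicit $e^{CT} e^{C\cH_0}$ dependence in \eqref{eq:NJE} requires a delicate logarithmic/Trudinger-type interpolation combined with a bootstrap on the characteristic flow, whereas the 3D setting of \cite{danchinvns, LSZ1} allowed a direct $L^1_t L^\infty_x$ bound on $u$.
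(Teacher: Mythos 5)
Your overall architecture (characteristics for the moments of $f$, then $t$- and $t^2$-weighted versions of Lemma \ref{lemma32} for the velocity) is the same as the paper's, but the central step is left unresolved. To bound $\|f(t)\|_{L^1_v(L^\infty_x)}$ from the representation formula you need $\|u-u_\infty\|_{L^1_t(L^\infty_x)}$, and you propose to obtain it from a ``Brezis--Gallou\"et type'' interpolation plus the dissipation bound $\int_0^T\|\nabla u\|_{L^2}^2\,dt\le\cH_0$. That cannot work: with only $L^2_t(H^1_x)$ information, no logarithmic interpolation yields $L^\infty_x$ in dimension two; you need second derivatives, and the $\dot H^2$ control (your \eqref{Coro:0}) itself requires $\|n_f\|_{L^\infty}$ through the Brinkman term --- exactly the circularity you flag as ``the main obstacle'' without breaking it. The paper's resolution is quantitative rather than a bootstrap: write $\|u-\langle u\rangle\|_{L^\infty}\lesssim\|u-\langle u\rangle\|_{L^2}^{1/2}\|\nabla^2u\|_{L^2}^{1/2}$, use the $t$-weighted $H^1$ estimate \eqref{Coro:10} to get $\|u-u_\infty\|_{L^1_t(L^\infty)}\lesssim t^{1/2}\cH_0^{1/2}e^{C\cH_0}\bigl(1+t\|n_f\|_{L^\infty((0,t)\times\T^2)}\bigr)^{1/4}$, insert this into the characteristic bound \eqref{eq:nf0}, and observe that $\|n_f\|_{L^\infty}$ then reappears on the right-hand side only to the power $3/4<1$, so Young's inequality absorbs it and closes \eqref{nfT} and \eqref{Coro:0} simultaneously. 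This absorption with a sub-unit exponent is the missing idea. (A further flaw in the same step: the map $w\mapsto V(0;t,x,w)$ at fixed $x$ does \emph{not} have Jacobian $e^{2t}$ unless $\int\|\nabla u\|_{L^\infty}\,dt$ is already small --- that is the content of Lemma \ref{lemmaninfty} and is not available on $[0,T]$ here; the paper instead integrates the pointwise weighted bound \eqref{vqfinfty} against $(1+|v-\bar u|^{p+r})^{-1}$ as in \eqref{Minfty}.)

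There is also a local error in \eqref{Coro:1}: the splitting $|u-v|^2\le 2|u|^2+2|v|^2$ destroys the modulated structure. Neither $\|u\|_{L^2}^2$ nor $\int|v|^2f\,dx\,dv$ is controlled by $\cH_0$ (take $f_0=0$ and $u_0$ a large constant: then $\cH_0=0$ while $\|u(t)\|_{L^2}$ stays large), so your argument yields a bound by the full energy $\cE_0$, not the claimed $C(1+N_T)\cH_0$. One must instead expand around the common averages as in \eqref{particleed111}, namely $|u-v|^2\lesssim|v-\langle j_f\rangle/\langle n_f\rangle|^2+|\langle j_f\rangle/\langle n_f\rangle-\langle u\rangle|^2+|u-\langle u\rangle|^2$, each contribution being a constituent of $\cH$ (the last after multiplying by $n_f\le N_T$ and using Poincar\'e). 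The remaining steps --- \eqref{Coro:0} by multiplying \eqref{dpes021110000} by $t$ and Gr\"onwall, and \eqref{Coro:2} by multiplying \eqref{3111} by $t^2$ with the ${\rm BMO}$-duality treatment of the pressure term --- do match the paper.
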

\begin{proof}
Applying \eqref{Minfty} with $\bar u=u_\infty$ yields for all $t\in[0,T]$: 
\begin{equation}\label{eq:nf0}
\|n_f(t)\|_{L^{\infty}}\leq \|f(t)\|_{L^1_v(L^\infty_x)}\lesssim 
e^{-t}\||v-u_\infty|^3f_0\|_{L^\infty_{x,v}}+e^{2t}\bigl(1+\|u-u_\infty\|_{L^1_t(L^\infty)}^3\bigr)\|f_0\|_{L^\infty_{x,v}}.\end{equation}
By the Gagliardo-Nirenberg inequality, \eqref{eq:modulated} and \eqref{uinftybis}, we have
\begin{align}
\|u-u_\infty\|_{L^1_t(L^\infty)}
&\leq \|u-\langle u\rangle\|_{L^1_t(L^\infty_x)}+\|\langle u\rangle-u_\infty\|_{L^1_t}\nonumber\\
&\lesssim \int_0^t \|u-\langle u\rangle\|_{L^2}^{1/2} \bigl(\tau^{1/2}\|\nabla^2u\|_{L^2}\bigr)^{1/2}\tau^{-1/4}\,d\tau
+t^{1/2}\|\langle u\rangle-u_\infty\|_{L^2_t}\nonumber\\
&\lesssim t^{1/2}\mathcal{H}_0^{1/4} \biggl(\int_0^t\tau\|\nabla^2u\|_{L^2}^2\,d\tau\biggr)^{1/4}+t^{1/2}\mathcal{H}_0^{1/2}.\label{nabla2uqq}
\end{align}

Bounding the term  $\nabla^2u$ in  \eqref{nabla2uqq} stems from  Inequality \eqref{dpes021110000}: 
multiplying it by $t$ yields
$$
 \frac{d}{dt} \bigl(t\|\nabla u\|_{L^2}^2)+C^{-1}t\|(\dot{u},\nabla^2u,\nabla P)\|_{L^2}^2
\leq \|\nabla u\|_{L^2}^2+ Ct\|\nabla u\|_{L^2}^4+Ct\|n_{f}\|_{L^{\infty}} \int_{\mathbb{T}^2\times\mathbb{R}^2} |u-v|^2f\, dxdv.
 $$
Consequently, owing to the modulated energy identity \eqref{intenergym} and Gr\"onwall's lemma, we have
\begin{align}
 t\|\nabla u(t)\|_{L^2}^2+C^{-1}\int_0^t\tau\|(\dot u,\nabla^2u,\nabla P)\|_{L^2}^2\, d\tau \leq \bigl(1+Ct\|n_f\|_{L^\infty((0,t)\times\T^2})\bigr)
 \mathcal{H}_0\: e^{C\mathcal{H}_0}.\label{Coro:10}
\end{align}
Hence, combining \eqref{nabla2uqq} and \eqref{Coro:10}, we end up with
\begin{align}
    &\quad\|u-u_\infty\|_{L^1_t(L^\infty)}\leq C t^{{1}/{2}}\mathcal{H}_0^{1/2}e^{C\mathcal{H}_0} (1+t\|n_f\|_{L^\infty((0,t)\times\T^2)})^{{1}/{4}},\label{ddfgg}
\end{align}
and  reverting to \eqref{eq:nf0} gives 
\begin{multline*}
\|n_f(t)\|_{L^\infty}\leq \|f(t)\|_{L^1_v(L^\infty_x)}\lesssim 
e^{-t}\||v-u_\infty|^3f_0\|_{L^\infty_{x,v}}\\+e^{2t}\|f_0\|_{L^\infty_{x,v}}\Bigl(1+t^{3/2}e^{C\mathcal{H}_0}\mathcal{H}_0^{3/2}
\bigl(1+t\|n_f\|_{L^\infty((0,t)\times\T^2)}\bigr)^{3/4}\Bigr)\cdotp
\end{multline*}
Omitting $e^{-t}$ on the first term, and then leveraging  Young's inequality for  the last term, and the fact that  $r^k\leq C_ke^r$ for any $r\geq0$ and $k\geq0$ yields \eqref{nfT}. Next, plugging \eqref{nfT}  into \eqref{Coro:10} gives \eqref{Coro:0}.
Furthermore, arguing as in \eqref{particleed111}, one obtains from \eqref{intenergym} and \eqref{nfT} that
\begin{align*}
 \int _{\mathbb{T}^2\times\mathbb{R}^2} |v-u|^2 f&\,dxdv\\
&\leq 2 \int _{\mathbb{T}^2\times\mathbb{R}^2} \left|v-\frac{\langle j_{f}\rangle}{\langle n_{f}\rangle}\right|^2 f\, dxdv
+2\|n_f\|_{L^1}\left|\frac{\langle j_{f}\rangle}{\langle n_{f}\rangle}-\langle u\rangle\right|^2  +2 \int_{\mathbb{T}^2} |u-\langle u\rangle|^2 n_{f}\,dx\\
&\leq C(1+N_T)\mathcal{H}_0,
\end{align*}
whence \eqref{Coro:1}. 
\medbreak


 Bounding $vf$ is similar: we start with the observation (due to \eqref{Minfty}) that 
$$\begin{aligned}
\|vf(t)\|_{L^1_v(L^\infty_x)}&\!\leq\! |u_\infty|\|f(t)\|_{L^1_v(L^\infty_x)}+\|(v-u_\infty)f(t)\|_{L^1_v(L^\infty_x)}\\
&\!\leq\!|u_\infty|\|f(t)\|_{L^1_v(L^\infty_x)}
\!+\!C\bigl(e^{-2t}\||v-u_\infty|^4f_0\|_{L^\infty_{x,v}}\!+\!e^{2t}\bigl(1\!+\!\|u-u_\infty\|_{L^1_t(L^\infty)}^4\bigr)\|f_0\|_{L^\infty_{x,v}}\bigr)\cdotp
\end{aligned}$$
Hence, with the bounds \eqref{nfT} and  \eqref{ddfgg} at hand, we get \eqref{JfT}. 
 A similar argument may be applied for $|v|^2f$. 
 \smallbreak
Let us finally  justify \eqref{Coro:2}. Multiplying \eqref{3111} by $t^2$, we have
\begin{multline*}
\frac{d}{dt}\bigg(t^2\int_{\mathbb{T}^2} \Big(|\dot{u}|^2-(P-\langle P\rangle)\nabla u: (\nabla u)^{\top} \Big)\,dx\bigg)+ t^2\|\nabla \dot{u}\|_{L^2}^2+ t^2 \|\sqrt{n_f}\dot{u}\|_{L^2}^2\\
\leq 2t\int_{\mathbb{T}^2} \Big(|\dot{u}|^2-(P-\langle P\rangle)\nabla u: (\nabla u)^{\top} \Big)\,dx+ Ct^2\|\nabla u\|_{L^2}^2 \|\dot{u}\|_{L^2}^2\\+Ct^2\Big(1+\|e_f\|_{L^{\infty}}+\|n_{f}\|_{L^{\infty}}\bigl(1+ \|u\|_{L^{\infty}}^2\bigr)
+\bigl(\|n_{f}\|_{L^{\infty}}+\|n_{f}\|_{L^{\infty}}^2\bigr)\cD\Big) \||u-v|^2f\|_{L^1_{x,v}}.
 \end{multline*}
This leads to
\begin{multline}\label{337}
t^2\|\dot{u}\|_{L^2}^2+\int_0^t \tau^2\Big(\|\nabla \dot{u}\|_{L^2}^2+ \|\sqrt{n_f}\dot{u}\|_{L^2}^2\Big)\,d\tau\\
\leq  t^2\int_{\mathbb{T}^2}(P(t)-\langle P(t)\rangle)\nabla u(t):(\nabla u)^{\top}(t) \,dx+2\int_0^t \tau \int_{\mathbb{T}^2}(P-\langle P\rangle)\nabla u: (\nabla u)^{\top} \,dx\\
+ C\int_0^t\|\nabla u\|_{L^2}^2 (\tau^2\|\dot{u}\|_{L^2}^2)\,d\tau+C\int_0^t \tau^2 \Big(1+E_\tau +N_\tau (1+\|u\|_{L^{\infty}}^2)+(N_\tau+N_\tau^2)\mathcal{D}\Big)\mathcal{D}\,d\tau.
  \end{multline}
  Arguing as for proving \eqref{ddtnablauhomo3}, then using \eqref{nablaunablaPL21}, we have on $[0,T]$:
\begin{align}
\int_{\mathbb{T}^2}(P-\langle P\rangle)\nabla u: (\nabla u)^{\top} \,dx\nonumber
&\leq C\|\nabla P\|_{L^2}\|\nabla u\|_{L^2}^2\\&\leq
 C\|\nabla u\|_{L^2}^2\Bigl(\|\dot u\|_{L^2}+\sqrt{N_T}\||u-v|^2f\|_{L^1_{x,v}}^{1/2}\Bigr)\cdotp\label{3.37}\end{align}
Hence, using also  Young's inequality, we find that 
\begin{align*}
t^2\int_{\mathbb{T}^2}(P\!-\!\langle P\rangle)\nabla u: (\nabla u)^{\top} dx &\leq \frac{1}{2}t^2 \|\dot u\|_{L^2}^2+C\big(t\|\nabla u\|_{L^2}^2\big)^2+C t^2 N_T\||u-v|^2f\|_{L^1_{x,v}},\\
\int_0^t \!\!\tau\! \int_{\mathbb{T}^2}(P\!-\!\langle P\rangle)\nabla u: (\nabla u)^{\top} dx
&\leq C\sup_{\tau\in[0,t]}\|\tau^{1/2} \nabla u(\tau)\|_{L^2} \bigg(\int_0^t  \|\nabla u\|_{L^2}^2\,d\tau\bigg)^{1/2}\bigg(\int_0^t  \tau\|\dot{u}\|_{L^2}^2\,d\tau\bigg)^{1/2} \\
+Ct^{{1}/{2}} \sqrt{N_T} &\sup_{\tau\in[0,t]}\|\tau^{1/2} \nabla u(\tau)\|_{L^2} \|\nabla u\|_{L^2((0,t)\times\mathbb{T}^2)}\| |u-v|^2f\|_{L^1(0,t;L^1_{x,v})}^{1/2}.
\end{align*}
Therefore, reverting to \eqref{337} yields for all $t\in[0,T]$:
\begin{multline*}
t^2\|\dot{u}\|_{L^2}^2+\int_0^t \tau^2\Big(\|\nabla \dot{u}\|_{L^2}^2+ \|\sqrt{n_f}\dot{u}\|_{L^2}^2\Big)\,d\tau\lesssim
(1+N_T)\sup_{\tau\in[0,t]} \tau \|\nabla u(\tau)\|_{L^2}^2\\
+t^2N_T\sup_{\tau\in[0,t]}\| |u(\tau)-v|^2f(\tau)\|_{L^1_{x,v}}+ tN_T\|\nabla u\|_{L^2((0,t)\times\mathbb{T}^2)}^2\| |u-v|^2f\|_{L^1(0,t;L^1_{x,v})}
\\ + \Bigl(\sup_{\tau\in[0,t]} \tau \|\nabla u(\tau)\|_{L^2}^2\Bigr)^2+\biggl(\int_0^t  \|\nabla u\|_{L^2}^2\,d\tau\biggr)\biggl(\int_0^t  \tau\|\dot{u}\|_{L^2}^2\,d\tau\biggr)
\\+ C\int_0^t\|\nabla u\|_{L^2}^2 (\tau^2\|\dot{u}\|_{L^2}^2)\,d\tau
+ \int_0^t \tau^2 \Big(1+E_\tau +N_\tau (1+\|u\|_{L^{\infty}}^2)+(N_\tau+N_\tau^2)\mathcal{D}\Big)\mathcal{D}\,d\tau.
\end{multline*}
The first five terms of the right-hand side can be bounded by means of \eqref{intenergym} and \eqref{Coro:0},  
and the last one, by using that in addition, 
\begin{equation}\label{eq:uinfty}\|u\|_{L^{\infty}}\leq \langle u\rangle+C\|u-\langle u\rangle\|_{L^2}^{1/2}\|\nabla^2u\|_{L^2}^{1/2}
\leq |u_\infty|+|\langle u\rangle-u_\infty|+C\mathcal{H}^{1/4}\|\nabla^2u\|_{L^2}^{1/2}.\end{equation}
 Collecting the above estimates as well as \eqref{nablaunablaPL21}, \eqref{nfT}, \eqref{efT} and \eqref{Coro:0}, then 
 using Gr\"onwall lemma, we arrive at \eqref{Coro:2}.
\end{proof}

\subsection{Exponential decay rate in the case of small distribution functions}\label{subsectionexp}

 Let us fix some small enough $\eta>0$ (always less than $1$
 in what follows). We claim that one can find some time $t_\eta\geq1$ such that 
 \begin{equation}\label{eq:smallH} \mathcal{H}(t_\eta)\leq\eta\andf \cD(t_\eta)\leq\eta.\end{equation}
Indeed,  Proposition \ref{propalge} guarantees that the first part of \eqref{eq:smallH} is  satisfied for 
 \begin{equation}\label{tvar:in}
t_{\eta}\simeq \bigg(1+\frac{1+\mathcal{H}_0+\|f_{0} \log f_0\|_{L^1_{x,v}}}{\cM_0} \bigg) \biggl(\frac{\mathcal{H}_0}{\eta}\biggr)^{c_1 \cM_0}\,.
\end{equation}
Next, the modulated energy balance ensures that
$$t_\eta \:\min_{t\in[t_\eta,2t_\eta]} \cD(t)\leq \int_{t_\eta}^{2t_\eta}\cD(t)\,dt\leq \mathcal{H}(t_\eta)\leq\eta.$$
Since  $t_\eta\geq1$ if $\eta$ is small enough, one can conclude that there exists some $t\in[t_\eta,2 t_\eta]$ such that $\cD(t) \leq\eta.$ Still denoting (slightly abusively) this new time by $t_\eta,$ the full property \eqref{eq:smallH} is satisfied. 
 \medbreak
 The main aim of this subsection is to establish  various exponential decay rates
 \emph{from time $t_\eta$} for   smooth enough  finite energy   solutions.  
 For the time being, assume that the Lipschitz bound \eqref{eq:Lip} is satisfied for some $T>t_\eta$ and that
\begin{equation}\label{eq:N}
\sup_{t\in [t_\eta,T]}\|n_{f}(t)\|_{L^{\infty}}\leq N,\quad \sup_{t\in{[t_\eta,T]}}\|j_{f}(t)\|_{L^{\infty}}\leq J\andf \sup_{t\in [t_\eta,T]}\|e_{f}(t)\|_{L^{\infty}}\leq E,
\end{equation}
where the constants $N$, $J$ and $E$ will be chosen later.
\smallbreak
Based on Lemma \ref{c:ineq}, \eqref{eq:Lip} and \eqref{eq:N}, we shall focus on the proof of uniform estimates involving the modulated energy
$\mathcal{H}$, the dissipation rate $\cD$, and $\|\dot u\|_{L^2}$ on the time interval $[t_\eta,T]$.   
We shall eventually take advantage of these estimates and of some bootstrap argument 
to conclude that \eqref{eq:Lip} and \eqref{eq:N} are satisfied with $T=\infty,$ provided
$\|f_0\|_{L^\infty_{x,v}}$ is sufficiently small.

\vspace{1mm}

\begin{itemize}

\item {\textbf{Step 1: Exponential decay of the modulated energy}}.

\end{itemize}

\vspace{1mm}

The first step is to prove that the modulated energy tends exponentially fast to $0,$ with a rate that depends on $\cM_0$ and on $N$.
To do so, we observe that Inequality \eqref{particleed} and  Lemma \ref{lemmalogn}  imply that 
for some constant $C$ depending only on $\T^2,$ 
$$\mathcal H(t)\leq \int_{\mathbb{T}^2\times\mathbb{R}^2}|v-u|^2f\,dxdv+C\Bigl(1+\cM_0\log(1+N)\Bigr)\|\nabla u\|_{L^2}^2.$$
Hence, setting 
\begin{equation}\label{lambdaeta}\lambda=\frac{1}{2\max\biggl(1,C(1+\cM_0\log(1+N))\biggr)},\end{equation}
Inequality \eqref{intenergym}  gives
$$\frac d{dt}\mathcal H(t)+\lambda\mathcal H(t)+\frac12 \cD(t)\leq0\quad\hbox{on }\ [t_\eta,T],$$
whence
\begin{equation}\label{Eexphomo}
\sup_{t\in[t_\eta,T]} e^{\lambda (t-t_\eta)}\mathcal{H}(t)+\frac{1}{2}\int_{t_\eta}^{T}e^{\lambda (t-t_\eta)}\cD(t)\,dt\leq \mathcal{H}(t_\eta)\leq \eta.
\end{equation}

\vspace{1mm}

\begin{itemize}

\item {\textbf{Step 2: Exponential decay of $\cD(t)$}}.

\end{itemize}

\vspace{1mm}
Multiplying \eqref{dpes0211100001} by $e^{\lambda (t-t_\eta)}$ with $t\in[t_\eta,T]$ yields
\begin{multline*}
\frac{d}{dt}\Bigl(e^{\lambda (t-t_\eta)} \cD(t)\Bigr)+e^{\lambda (t-t_\eta)} \biggl(\|\dot{u}\|_{L^2}^2+\frac{1}{C}\|(\nabla^2u,\nabla P)\|_{L^2}^2)
+\||u-v|^2f\|_{L^1_{x,v}}\biggr)\\
\leq  (\lambda +CN)e^{\lambda  (t-t_\eta)}\cD +C\bigl(\|\nabla u\|_{L^2}^2+\|\nabla u\|_{L^{\infty}}\bigr) e^{\lambda (t-t_\eta)}\cD.
    \end{multline*}
Hence, using Gr\"onwall's lemma  
and Relation \eqref{intenergym}, we get
for all $t\in[t_\eta,T],$
\begin{multline*}
e^{\lambda (t-t_\eta)} \cD(t)+\int_{t_\eta}^te^{\lambda (\tau-t_{\eta})} \biggl(\|\dot{u}\|_{L^2}^2+\frac{1}{C}\|(\nabla^2u,\nabla P)\|_{L^2}^2+\||u-v|^2f\|_{L^1_{x,v}}\biggr)\,d\tau\\
\leq   Ce^{C\int_{t_\eta}^T(\|\nabla u\|_{L^2}^2+\|\nabla u\|_{L^{\infty}})\,d\tau}\Bigl(D(t_\eta)+(\lambda+N)\int_{t_\eta}^t e^{\lambda(\tau-t_\eta)}\mathcal{D}\, d\tau  \Bigr)\cdotp
    \end{multline*}
Taking \eqref{eq:Lip}, \eqref{eq:smallH}, \eqref{eq:N}, \eqref{Eexphomo} and Relation \eqref{intenergym} into account,
and remembering that $\lambda\leq1$ and $\eta\leq1,$  we conclude  that
\begin{multline}\label{eq:ExpD}
\sup_{t\in[t_\eta,T]} e^{\lambda (t-t_\eta)} \cD(t)+\int_{t_\eta}^T e^{\lambda (t-t_{\eta})} \bigl(\|(\dot{u},\nabla^2u,\nabla P)\|_{L^2}^2+\||u-v|^2f\|_{L^1_{x,v}}\bigr)\,dt
\leq   C (1\!+\!N) \eta.
    \end{multline}

\vspace{1mm}

\begin{itemize}

\item {\textbf{Step 3: Exponential decay of $\|\dot u\|_{L^2}$}}

\end{itemize}

\vspace{1mm}

Assuming from now on that  $N,$ $J$ and $E$ in \eqref{eq:N} have been chosen so that $1\leq N\leq \min(J,E),$
 Inequality \eqref{3111} reduces to 
\begin{multline*}
 \frac{d}{dt}\biggl(\|\dot{u}\|_{L^2}^2-\int_{\T^2}(P-\langle P\rangle)\nabla u: (\nabla u)^{\top}\,dx\biggr)
 + \|\nabla \dot{u}\|_{L^2}^2+  \|\sqrt{n_f}\dot{u}\|_{L^2}^2
\lesssim  \|\nabla u\|_{L^2}^2 \|\dot{u}\|_{L^2}^2\\
 +\Bigl(E+N\|u\|_{L^{\infty}}^2+N^2\cD\Bigr)\int_{\mathbb{T}^2\times\mathbb{R}^2} |u-v|^2 f\,dxdv.
\end{multline*}
Multiplying  by  $(t-t_\eta)e^{\lambda (t-t_\eta)/2}$ and integrating on $[t_\eta,t]$ for $t\in [t_\eta,T],$ we discover that
\begin{multline}\label{eq:dotu1}
(t-t_\eta)e^{\frac{\lambda (t-t_\eta) }2}\|\dot u(t)\|_{L^2}^2+
\int_{t_\eta}^t (\tau-t_\eta) e^{\frac{\lambda (\tau-t_\eta)}2}\bigl( \|\nabla \dot{u}\|_{L^2}^2+  \|\sqrt{n_f}\dot{u}\|_{L^2}^2\bigr)\,d\tau\\
\leq \int_{t_\eta}^t\Bigl(1+\frac\lambda2(\tau-t_\eta) \Bigr)e^{\frac{\lambda(\tau-t_\eta) }2}\|\dot u\|_{L^2}^2\,d\tau
+ C\int_{t_\eta}^t \cD(\tau) (\tau-t_\eta) e^{\frac{\lambda (\tau-t_\eta)  }2}\|\dot u\|_{L^2}^2\, d\tau\\
+ C\int_{t_\eta}^t(\tau-t_\eta) e^{\frac{\lambda (\tau-t_\eta) }2}\Bigl(E+N\|u\|_{L^{\infty}}^2+N^2\cD\Bigr)
\|f|u-v|^2\|_{L^1_{x,v}}\,d\tau\\
-\int_{t_\eta}^t\int_{\T^2}\Bigl(1+\frac\lambda2(\tau-t_\eta) \Bigr)e^{\frac{\lambda (\tau-t_\eta)  }2}(P-\langle P\rangle)\nabla u: (\nabla u)^{\top}\,dx d\tau\\
+(t-t_\eta)e^{\frac{\lambda (t-t_\eta)  }2}\int_{\T^2}(P(t)-\langle P(t)\rangle)\nabla u(t):\nabla u(t)\,dx.
\end{multline}
Arguing as for proving \eqref{3.37}, we easily get
\begin{align*}
&\int_{\T^2}(P-\langle P\rangle)\nabla u: (\nabla u)^{\top}\,dx\lesssim \|\nabla u\|_{L^2}^2\Bigl(\|\dot u\|_{L^2}+\sqrt N\||u-v|^2f\|_{L^1_{x,v}}^{1/2}\Bigr)\cdotp
\end{align*}
To continue the computations, one can bound $\|u\|_{L^\infty}$ by means of \eqref{eq:uinfty}, which gives, 
using also \eqref{intenergym}, \eqref{uinftybis}, \eqref{nablaunablaPL21} then Young's inequality 
to go from the second to the third line below
\begin{align*}
N\|u\|_{L^\infty}^2\||u-v|^2f\|_{L^1_{x,v}}&\lesssim N(|u_\infty|^2+\mathcal H) \||u-v|^2f\|_{L^1_{x,v}} 
+N\mathcal H^{1/2}\|\nabla^2 u\|_{L^2}\||u-v|^2f\|_{L^1_{x,v}}\\
\lesssim N(|u_\infty|^2 &+\mathcal H)\cD +N\mathcal H^{1/2}\|\dot u\|_{L^2}\||u-v|^2f\|_{L^1_{x,v}}
+N^{3/2}\mathcal H^{1/2}\||u-v|^2f\|_{L^1_{x,v}}^{3/2}\\
\lesssim N(|u_\infty|^2 &+\mathcal H)\cD +   \cD \|\dot u\|_{L^2}^2+(N^2\mathcal H+N\cD)\||u-v|^2f\|_{L^1_{x,v}}.\end{align*}
 Plugging  all the above inequalities in  \eqref{eq:dotu1} and using that $\lambda(\tau-t_\eta)  e^{\frac{\lambda (\tau-t_\eta)  }2}\leq C
  e^{\frac{2\lambda (\tau-t_\eta) }3},$  we discover that
\begin{multline*}
(t-t_\eta) e^{\frac{\lambda (t-t_\eta)}2}\|\dot u(t)\|_{L^2}^2+
\int_{t_\eta}^t (\tau-t_\eta) e^{\frac{\lambda (\tau-t_\eta)  }2}\bigl( \|\nabla \dot{u}\|_{L^2}^2+  \|\sqrt{n_f}\dot{u}\|_{L^2}^2\bigr)\,d\tau
\lesssim  \int_{t_\eta}^t e^{\frac{2\lambda(\tau-t_\eta)}3}\|\dot u\|_{L^2}^2\,d\tau\\+ \int_{t_\eta}^t \cD(\tau) (\tau-t_\eta)  e^{\frac{\lambda(\tau-t_\eta) }2}\|\dot u\|_{L^2}^2\, d\tau
+\int_{t_\eta}^t(\tau-t_\eta) e^{\frac{\lambda(\tau-t_\eta) }2}\Bigl(E+N|u_\infty|^2+N^2\mathcal H\Bigr)\cD(\tau)\,d\tau
\\
+\int_{t_\eta}^t (\tau-t_\eta)e^{\frac{\lambda(\tau-t_\eta)}2}N^2\cD(\tau)
\|f|u-v|^2\|_{L^1_{x,v}}\,d\tau\\
+\sup_{\tau\in [t_\eta,t]} \|\nabla u(\tau)\|_{L^2}^2 \bigg(\int_{t_\eta}^t  e^{\frac{2\lambda(\tau-t_\eta)}3}\,d\tau\bigg)^{1/2} \bigg(\int_{t_\eta}^t  e^{\frac{2\lambda(\tau-t_\eta)}3} \Big(\|\dot{u} \|_{L^2}^2+N \||u-v|^2f\|_{L^1_{x,v}}\Big)\,d\tau\bigg)^{1/2}\\
+ (t-t_\eta)e^{\frac{\lambda(t-t_\eta)}2}\|\nabla u(t)\|_{L^2}^2\Bigl(\|\dot u(t)\|_{L^2}+\sqrt N\||u(t)-v|^2f(t)\|_{L^1_{x,v}}^{1/2}\Bigr)\cdotp\end{multline*}
As we assumed that $N\geq1,$ the last term may be bounded from  \eqref{eq:ExpD} as follows: 
\begin{multline*}(t-t_\eta)e^{\frac{\lambda(t-t_\eta)}2}\|\nabla u(t)\|_{L^2}^2\Bigl(\|\dot u(t)\|_{L^2}+\sqrt N\||u(t)-v|^2f(t)\|_{L^1_{x,v}}^{1/2}\Bigr)
\lesssim N\eta\|\dot u(t)\|_{L^2}+N^2\eta^{3/2}.
\end{multline*}
Hence, using Gr\"onwall's lemma, as well as
 Inequalities \eqref{intenergym}, \eqref{eq:smallH}, \eqref{Eexphomo}, \eqref{eq:ExpD} and 
   $\lambda\leq 1,$ $E\geq1,$  and assuming that $\eta\leq N^{-1},$  we end up with
\begin{multline}\label{eq:Expdotu}
\sup_{t\in[t_\eta,T]}(t-t_\eta)e^{\frac{\lambda (t-t_\eta)}2}\|\dot u(t)\|_{L^2}^2+
\int_{t_\eta}^T (t-t_\eta)e^{\frac{\lambda (t-t_\eta)}2}\bigl( \|\nabla \dot{u}\|_{L^2}^2+  \|\sqrt{n_f}\dot{u}\|_{L^2}^2\bigr)\,d\tau
\\\leq C N\lambda^{-1}\biggl(E+N|u_\infty|^2+N^3\biggr) \eta.
\end{multline}
Remember that $u_\infty$ is given by \eqref{uinfty}, hence only depends on the initial data.

\vspace{1mm}

\begin{itemize}

\item {\textbf{Step 4: A Lipschitz estimate}}

\end{itemize}

\vspace{1mm}

In this step, we demonstrate that the desired Lipschitz bound follows from  Inequalities \eqref{eq:ExpD} and \eqref{eq:Expdotu}.
The starting point is the following embedding:
$$\|\nabla u\|_{L^\infty}\lesssim \|\nabla^2 u\|_{L^{3,1}},$$
where $L^{3,1}$ denotes the classical Lorentz space defined in e.g. \cite{Gra}. 
Now,  using again that the velocity equation may be seen as the Stokes equation \eqref{eq:stokes} and putting   
the above embedding together with Corollary \ref{corstokes}, we can write that for all $t\in[t_\eta,T],$ we have:
\begin{equation}\label{eq:Lip1}
\int_{t_\eta}^t\|\nabla u\|_{L^\infty}\,d\tau \lesssim \int_{t_\eta}^t\|\dot u\|_{L^{3,1}}\,d\tau +\int_{t_\eta}^t \biggl\|\int_{\R^2} f(u-v)\,dv\biggr\|_{L^{3,1}}\,d\tau.\end{equation}
By Gagliardo-Nirenberg's inequality, we have
\begin{equation}\label{eq:GNinfty}\|\dot u\|_{L^{3,1}}\lesssim 
 \|\dot u\|_{L^2}^{2/3}\|\nabla\dot u\|_{L^2}^{1/3}+ \|\dot u\|_{L^2}.\end{equation}
To handle the first term, we use H\"older inequality, \eqref{eq:ExpD} and \eqref{eq:Expdotu}, and get for all $t\in[t_\eta,T]$
\begin{align*}
\int_{t_\eta}^t &\|\dot u\|_{L^2}^{2/3}\|\nabla\dot u\|_{L^2}^{1/3}\,d\tau\\
&
=\int_{t_\eta}^t \Bigl(e^{\frac{\lambda (\tau-t_\eta)}4}\|\dot u\|_{L^2}\Bigr)^{2/3}
\Bigl((\tau-t_\eta)e^{\frac{\lambda (\tau-t_\eta)}4}\|\nabla\dot u\|_{L^2}\Bigr)^{1/3}(\tau-t_\eta)^{-1/3}e^{-\frac{\lambda (\tau-t_\eta)}4}\,d\tau\\
&\lesssim   \Bigl\|e^{\frac{\lambda (\tau-t_\eta)}4}\dot u\Bigr\|_{L^2(({t_\eta},t)\times\T^2)}^{2/3}
\Bigl\|(\tau-t_\eta)e^{\frac{\lambda (\tau-t_\eta)}4}\nabla\dot u\|_{L^2(({t_\eta},t)\times\T^2)}^{1/3}\biggl(\int_{t_\eta}^t (\tau-t_\eta)^{-2/3} e^{-\frac{\lambda (\tau-t_\eta)}2}\, d\tau\biggr)^{1/2}\\
&\lesssim \lambda^{-1/6}N^{1/6} \Bigl(1 +N\Bigr)^{1/3}\Bigl(1+E+N^{3}\Bigr)^{1/6}\eta^{1/2} \cdotp
\end{align*}
Here we used 
\begin{align*}
\int_{t_\eta}^t (\tau-t_\eta)^{-\frac23} e^{-\frac{\lambda (\tau-t_\eta)}2}\, d\tau=
 \int_{0}^{t-t_\eta} \tau^{-\frac23} e^{-\frac{\lambda\tau}2}\,d\tau\leq
 \lambda^{-1/3}\int_0^\infty\tau^{-\frac23}e^{-\frac\tau2}\,d\tau.
\end{align*}
To bound the term with just $\|\dot u\|_{L^2},$ one can use \eqref{eq:ExpD} as follows:
\begin{align*}
\int_{t_\eta}^t \|\dot u\|_{L^2}\,d\tau&\leq \|e^{\lambda (\tau-t_\eta)}\dot u\|_{L^2( (t_\eta,t)\times\T^2)}\bigg(\int_{t_\eta}^te^{-2\lambda (\tau-t_\eta)}d\tau \bigg)^{1/2}\lesssim \lambda^{-1/2}(1+N)^{1/2} \eta^{1/2}  
\cdotp
\end{align*}
To bound the term involving the $L^{3,1}$ norm of the Brinkman force, we argue as follows (where we use repeatedly that $N\geq1$ and $J\geq1$):
\begin{align*}
 \biggl\|\int_{\R^2} f(u-v)\,dv\biggr\|_{L^{3,1}}\!
 &\lesssim  \biggl\|\int_{\R^2} f(u-v)\,dv\biggr\|_{L^2}^{1/2} \biggl\|\int_{\R^2} f(u-v)\,dv\biggr\|_{L^6}^{1/2}\\
 &\lesssim \biggl\|\int_{\R^2} f(u-v)\,dv\biggr\|_{L^2}^{1/2} \Bigl(N^{1/2}\|u\|_{L^6}^{1/2}+\|j_f\|_{L^6}^{1/2}\Bigr)\\
  &\lesssim \biggl\|\int_{\R^2} f(u-v)\,dv\biggr\|_{L^2}^{1/2} \biggl(N^{1/2}\bigl(\|u-\langle u\rangle\|_{L^6}
  \!+\!|\langle u\rangle-u_\infty|\!+\!|u_\infty|\bigr)^{1/2}\!+\!J^{1/2}\biggr)\\
  &\lesssim  N^{1/4}\bigl\|f|u-v|^2\bigr\|_{L^1_{x,v}}^{1/4}  N^{1/2}\biggl(\|\nabla u\|_{L^2}^{1/2}
  +{\mathcal H}^{1/4}+|u_\infty|^{1/2}+J^{1/2}\biggr)\cdotp
  \end{align*}
Hence, using H\"older's inequality, \eqref{Eexphomo} and \eqref{eq:ExpD}, we get
\begin{align*}
\int_{t_\eta}^t&\biggl\|\int_{\R^2} \!f(u-v)\,dv\biggr\|_{L^{3,1}}\, d\tau\\
& \lesssim \!N^{3/4}\!\!\int_{t_\eta}^t\!\! e^{-\frac{\lambda (\tau-t_\eta)}4} \Bigl(e^{\lambda (\tau-t_\eta)} \bigl\|f|u-v|^2\bigr\|_{L^1_{x,v}}\Bigr)^{1/4}
  \Bigl(\|\nabla u\|_{L^2}^{1/2}  \!+\!{\mathcal H}^{1/4}\!+\!|u_\infty|^{1/2}\!+\!J^{1/2}\Bigr)\,d\tau\\
  &\lesssim \lambda^{-3/4}N^{3/4}\biggl(\int_{t_\eta}^te^{\lambda (\tau-t_\eta)} \bigl\|f|u-v|^2\bigr\|_{L^1_{x,v}}\,d\tau\biggr)^{1/4}
    \Bigl(\sup_{\tau\in[t_\eta,t]}\bigl(\cD\!+\!\mathcal{H}\bigr)^{1/4}(\tau)\!+\!|u_\infty|^{1/2}\!+\!J^{1/2}\Bigr)\\
     &\lesssim \lambda^{-3/4}N^{3/4}\Bigl(J^{1/2}+E^{1/4}+N^{1/4}|u_\infty|^{1/2}+N^{3/4}+|u_\infty|^{1/2}\Bigr)\eta^{1/4}\cdotp\end{align*}
  Putting the above inequalities together and remembering that $N,J,E\geq1,$ we end up with
  the following inequality for all $t\in[t_\eta,T]$:
  \begin{multline}\label{uLip}
\int_{t_\eta}^t\|\nabla u\|_{L^\infty}\,d\tau\lesssim \lambda^{-1/6}N^{1/2}\bigl(E+N^{3}\bigr)^{1/6} \eta^{1/2}+\lambda^{-1/2}N^{1/2} \eta^{1/2}\\
+\lambda^{-3/4}N^{3/4}\Bigl(J^{1/2}+E^{1/4}+N^{1/4}|u_\infty|^{1/2}+N^{3/4}\Bigr)\eta^{1/4}.
  \end{multline}

\vspace{1mm}

\vspace{1mm}

\begin{itemize}

\item {\textbf{Step 5: The bootstrap}}.

\end{itemize}

\vspace{1mm}


Let us fix some $\eta\in(0,1),$ define $t_\eta$ according to \eqref{tvar:in} and set 
  \begin{align*}
  N_0= \||v-u_\infty|^3f_0\|_{L^\infty_{x,v}}+1,\quad
 J_0=\| |v-u_\infty|^4 f_0\|_{L^{\infty}_{x,v}}+1
 \andf  E_0=\| |v-u_\infty|^q f_0\|_{L^{\infty}_{x,v}}+1.
  \end{align*}
   Lemma \ref{c:ineq} guarantees that there exists $\alpha_1=\alpha_1(t_{\eta},\mathcal{H}_0, \|(|v|^3+|v|^{q}f_0)\|_{L^{\infty}_{x,v}},u_{\infty})$ such that if  $f_0$ satisfies \eqref{smin1} then
\begin{equation}\label{eq:NJE0}   
    N_{t_\eta}\leq N_0,\quad J_{t_\eta}\leq J_0\andf E_{t_\eta}\leq E_0.
 \end{equation}
In this final step, we want to show that \eqref{eq:Lip} and \eqref{eq:N} are  satisfied for all  $T\in (t_\eta,\infty),$
with 
 $$  N=3N_{0},\quad J=J_{0}+3N_{0}(|u_{\infty}|+1)\andf E=E_{0}+5 N_{0}(|u_{\infty}|^2+1).$$
Let $T^*$ be defined by 
  $$ 
  T^*:=\sup\Big\{ t\in [t_\eta, T)~:~{\text{\eqref{eq:Lip} and \eqref{eq:N} hold true}}\Big\}\cdotp 
  $$
  If $N\eta\leq1,$ then the estimates of the previous steps are valid on $[t_\eta,T^*),$ with 
\begin{equation}\label{lambda0}
\lambda=\lambda_0:=\frac{1}{2\max\biggl(1,C(1+\cM_0\log(1+N_{0}))\biggr)}\cdotp\end{equation}
Since $N_0, J_0, E_0\geq1$, $\lambda_0\leq 1/2$ and  $N\eta\leq 1,$ we see  that \eqref{uLip} reduces to
 \begin{align*}
  \int_{t_\eta}^T\|\nabla u\|_{L^\infty}\,dt&\leq C\lambda_0^{-3/4}(\eta N)^{1/4}\Bigl(N^{1/2}J^{1/2}+N^{1/2}E^{1/4}+N|u_\infty|^{1/2}+N^{5/4}\Bigr)\\
  &\leq C \lambda_0^{-3/4}\eta^{1/4}N_0^{3/4}\Bigl(J_0^{1/2}+E_0^{1/4}+N_0^{1/4}|u_\infty|^{1/2}+N_0^{3/4}\Bigr)\cdotp
  \end{align*}
  Hence, there exists a (small) absolute constant $c\in(0,1)$ such that  
   \begin{equation}\label{eq:Lipestb}
  \int_{t_\eta}^T\|\nabla u\|_{L^\infty}\,dt \leq \frac1{20}\,,
  \quad\hbox{if } \    \eta\leq \eta_0:= c\lambda_0^{3}N_0^{-3}\Bigl(J_0^{2}+E_0+N_0|u_\infty|^2+N_0^3\Bigr)^{-1}\cdotp
  \end{equation}
It is clear that $\eta_0\leq1$ and $N\eta_0\leq 1$. Next, based on Lemma \ref{lemmaninfty}, \eqref{nfT} and \eqref{eq:Lip}, we immediately have 
\begin{align}
\sup_{t\in[t_\eta,T^*)} \|n_f(t)\|_{L^\infty}\leq 2 \|f(t_\eta)\|_{L^1_v(L^\infty_x)}\leq 2N_{0}<N.\label{NT*}
\end{align}
Using \eqref{jfinftylarge} with $\bar u=0,$ then \eqref{nfT} yields  for all $T\in(t_\eta,T^*),$    
$$
\begin{aligned} 
\|vf(t)\|_{L^1_v(L^\infty_x)}&\leq 2e^{-2(t-t_\eta)}\|vf(t_\eta)\|_{L^1_v(L^\infty_x)}
+2\|f(t_\eta)\|_{L^1_v(L^\infty_x)} \int_{t_\eta}^t e^{-(t-\tau)} \|u\|_{L^\infty}\,d\tau\\
 &\leq 2e^{-2(t-t_\eta)} J_0 + 2N_0\biggl( |u_\infty|+\int_{t_\eta}^t e^{-(t-\tau)} \|u-u_\infty\|_{L^\infty}\,d\tau\biggr)\cdotp\end{aligned}$$
 From \eqref{eq:uinfty}, we can write
 \begin{align*}
\int_{t_{\eta}}^{t}&e^{-(t-\tau)}\|u- u_{\infty} \|_{L^{\infty}}\,d\tau\\
&\lesssim\int_{t_{\eta}}^{t}e^{-(t-\tau)}\|u-\langle u\rangle\|_{L^{2}}^{\frac{1}{2}}\|\nabla^2 u\|_{L^2}^{\frac{1}{2}}\,d\tau
+\int_{t_{\eta}}^{t}e^{-(t-\tau)}\mathcal{H}^{1/2}(\tau)\,d\tau\\
&\lesssim \biggl(\sup_{[t_\eta,t]} e^{{\lambda_0(\tau-t_\eta)}}\|u-\langle u\rangle\|_{L^2}^{2}\biggr)^{\!1/4}
 \Bigl\|e^{\frac{\lambda_0(\tau-t_\eta)}4}\nabla^2 u\Bigr\|_{L^2((t_\eta,t)\times\T^2)}^{1/2}
\biggl(\int_{t_{\eta}}^{t}e^{-\frac43(t-\tau)}e^{-\frac{2\lambda_0}3(\tau-t_\eta)}\,d\tau\biggr)^{\!3/4}\\
&\quad+ \biggl(\sup_{\tau\in[t_\eta,t]} e^{\frac{\lambda_0}{2}(\tau-t_\eta)} \mathcal{H}^{1/2}(\tau)\biggr)\int_{t_{\eta}}^{t}e^{-(t-\tau)}e^{-\frac{\lambda_0}{2}(\tau-t_\eta)}\,d\tau.
\end{align*}
Hence, thanks to \eqref{Eexphomo} and \eqref{eq:ExpD}, and remembering that $\lambda_0\leq1/2$ and $N_0\geq1,$
\begin{equation}\label{eq:uinfty1}
\int_{t_{\eta}}^{t}e^{-(t-\tau)}\|u- u_{\infty} \|_{L^{\infty}}\,d\tau\leq CN_{0}^{1/4}
 e^{-\frac{\lambda_0}2(t-t_\eta)} \eta^{1/2}.\end{equation}
 Consequently, we have 
\begin{equation}\label{eq:jdecay}
 \|vf(t)\|_{L^1_v(L^\infty_x)}\leq   2e^{-2(t-t_\eta)} J_0 + 2N_0 \biggl(|u_\infty|
+CN_{0}^{1/4} e^{-\frac{\lambda_0}2(t-t_\eta)}\eta^{1/2}\biggr)\cdotp
\end{equation}
For bounding $|v|^2f,$ we start from \eqref{efinftylarge} with $\bar u=0$:
$$ \begin{aligned}
 \||v|^2f(t)\|_{L^1_v(L^\infty_x)}\leq 4 e^{-2(t-t_\eta)}\||v|^2 f(t_\eta)\|_{L^1_{v}(L^{\infty}_{x})}+8\|f (t_\eta)\|_{L^1_v(L^{\infty}_x)}  \bigg(\int_{ t_\eta}^{t} e^{- (t-\tau)} \|u\|_{L^{\infty}}\,d\tau\bigg)^2\\
\leq4 e^{-2(t-t_\eta)} E_0+ 4N_0\Biggl(|u_\infty|^2 + \biggl(\int_{ t_\eta}^{t} e^{- (t-\tau)} \|u-u_\infty\|_{L^{\infty}}\,d\tau\biggr)^2\Biggr)
\cdotp\end{aligned}$$
Then, we use \eqref{eq:uinfty1} to bound the last term, and  end up with 
\begin{equation}\label{eq:edecay}
\||v|^2 f(t)\|_{L^1_v(L^\infty)}\leq 4(e^{-2(t-t_\eta)}E_0+2N_0|u_\infty|^2) + CN_0\, e^{-\lambda_0(t-t_\eta)} N_0^{1/2}\eta.
\end{equation}
Since the definition of $\eta_0$ already ensures that $\eta_0 N_0^{1/2}\ll1,$ one can conclude that
up to a harmless change of $c$ in \eqref{eq:Lipestb}, 
 Inequalities  \eqref{eq:Lipestb}, \eqref{NT*}, \eqref{eq:jdecay} and \eqref{eq:edecay} are valid with $\eta=\eta_0,$
 which   ensures that \eqref{eq:Lip} and \eqref{eq:N} hold true with strict inequalities  on $[t_\eta,T^*).$
 Hence we must have   $T^*=\infty.$ In other words,  all the estimates of the previous steps hold true on the interval $[t_{\eta_0},\infty)$.
 
Note that an explicit value of $\alpha_1$ in \eqref{smin1} may be found from the definition of $\eta_0$ in \eqref{eq:Lipestb},  
 the definitions of $N_{t_{\eta_0}},$ $J_{t_{\eta_0}}$ and $E_{t_{\eta_0}}$  in \eqref{eq:NJE}, and
 the requirement that \eqref{eq:NJE0} has to be satisfied with $\eta=\eta_0$.


\subsection{Large-time exponential asymptotics}\label{subsectionexp1}

The computations that we performed so far readily ensure that
\begin{equation}\label{eq:decayff}
\mathcal{H}(t)+\|u(t)-u_{\infty}\|_{H^2}^2+\|\dot u(t)\|_{L^2}^2+\|\nabla P(t)\|_{L^2}^2\leq C e^{-\frac{{\lambda_0} t}2}
\quad\hbox{for all }\ t\geq t_{\eta},\end{equation}
with $\eta=\eta_0$, the constant $C$ depending only on suitable norms of the data, and $\lambda_0$ defined in \eqref{lambda0}. Since $t_\eta$ 
is defined in terms of the data, employing \eqref{energy} and Lemma \ref{c:ineq} with $T=t_\eta$ implies that the left-hand side of \eqref{eq:decayff} can be bounded on $[1,t_\eta],$ just in terms of suitable norms of the initial data. 
 Consequently, \eqref{eq:decayff} holds \emph{for all} $t\geq1$.
\medbreak
We claim that for all $t\geq 1$, we  also have
\begin{multline}\label{decayff}
W_1(f,n_{\infty}(x-u_{\infty}t)\otimes\delta_{v=u_{\infty}})+\| |v-u_{\infty}|^2 f(t)\|_{L^{1}_{x,v}}\\
+\|n_f(t)-n_{\infty}(\cdot-t u_{\infty})\|_{\dot{H}^{-1}}\leq C e^{-\frac{\lambda_0}2 t},
\end{multline}
where the profile $n_{\infty}\in \dot{H}^{-1}$ has been  defined in \eqref{eq:ninfty}.
\medbreak
For bounding the second term, we can use that owing to the definition of $u_\infty$ and to \eqref{uinftybis},
\begin{align*}
\| |v-u_{\infty}|^2 f\|_{L^{1}_{x,v}}&\leq 3\int_{\mathbb{T}^2\times\mathbb{R}^2}  \biggl|v-\frac{\langle j_f\rangle}{\langle n_f\rangle}\biggr|^2 f\,dxdv
+3\int_{\mathbb{T}^2\times\mathbb{R}^2} \biggl|\frac{\langle j_f\rangle}{\langle n_f\rangle}-\langle u\rangle\biggr|^2 f\,dxdv\\
&\hspace{6cm}+3\int_{\mathbb{T}^2\times\mathbb{R}^2} \bigl|\langle u\rangle -u_\infty\bigr|^2 f\,dxdv\\
 &\leq 6\mathcal{H}+3\cM_0\biggl|\frac{\langle j_f\rangle}{\langle n_f\rangle}-\langle u\rangle \biggr|^2
 +3\cM_0 \frac{\langle n_{f_0}\rangle^2}{(1+\langle n_{f_0}\rangle)^2} 
 \biggl|\langle u\rangle-\frac{\langle j_f\rangle}{\langle n_f\rangle}\biggr|^2\leq C\mathcal{H}.\end{align*}
Next, integrating  $\eqref{homoVNS}_1$ with respect to $(v,\tau)\in\mathbb{R}^2\times [0,t]$ yields
\begin{equation}
\widetilde{n}_f(t,x)=n_{f_0}(x)-{\rm{div}}_x \int_0^{t}\int_{\mathbb{R}^2}(v-u_{\infty})f(\tau,x+\tau u_\infty,v)\,dvd\tau,\label{nf0}
\end{equation}
where the shifted density $\widetilde n_f$ has been defined by
\begin{equation}\label{eq:wtnf} \widetilde{n}_f(t,x):=n_{f}(t,x+t u_{\infty}).\end{equation}  
The Cauchy-Schwarz inequality ensures that
$$
\biggl\|\int_{\mathbb{R}^2}(v-u_\infty)f\,dv\biggr\|_{L^2}\leq \|n_f\|_{L^\infty}^{1/2}\| |v-u_{\infty}|^2 f\|_{L^{1}_{x,v}}^{1/2}.$$
Hence,  due to \eqref{eq:decayff}, $\int_{\mathbb{R}^2}(v-u_{\infty})f\,dv$ converges exponentially fast to $0$ in $L^2,$ and we can  define
$$
j_{\infty}(x):=\int_0^{\infty}\!\!\!\int_{\mathbb{R}^2}(v-u_{\infty})f(\tau,x+\tau u_{\infty},v)\,dvd\tau\andf {n}_{\infty}(x):=n_{f_0}(x)-\div j_{\infty}(x)\in \dot{H}^{-1}.
$$
Therefore, \eqref{nf0} implies 
$$
\widetilde{n}_{f}(t,x)-{n}_{\infty}(x)=\div_{x} \int_{t}^{\infty}\int_{\mathbb{R}^2}(v-u_{\infty})f(\tau,x+\tau u_{\infty},v)\,dvd\tau,
$$
from which we infer that for all $t\geq t_{\eta}$,
\begin{equation}\label{nH-1}
\begin{aligned}
\|n_{f}(t)-n_{\infty}(\cdot-t u_{\infty})\|_{\dot H^{-1}}&=\|\widetilde{n}_{f}(t)-{n}_{\infty}\|_{\dot H^{-1}}\\
&\leq \int_{t}^{\infty}\left\|\int_{\mathbb{R}^2}(v-u_{\infty})f\,dv\right\|_{L^2}\,d\tau\\
&\leq \sup_{\tau\in[t,\infty)}\|n_{f}(\tau)\|_{L^{\infty}}^{\frac{1}{2}}\int_{t}^{\infty} \||v-u_{\infty}|^2{f}\|_{L^1_{x,v}}^{1/2}\,d\tau\\
&\leq C e^{-\frac{{\lambda_0}}{2}t}\cdotp
\end{aligned}
\end{equation}
Next, by virtue of Definition \ref{MKD}, we have
\begin{align*}
W_1(f,n_{f}\otimes\delta_{v=u_{\infty}})&=\sup_{\|\nabla_{x,v}\psi\|_{L^\infty}=1}\int_{\mathbb{T}^2\times\mathbb{R}^2} f(t,x,v)\left(\psi(x,v)-\psi(x,u_{\infty})\right) \,dxdv\\
&\leq\int_{\mathbb{T}^2\times\mathbb{R}^2} |v-u_{\infty}|f\,dxdv\\
&\leq\left(\int_{\mathbb{T}^2\times\mathbb{R}^2} f \,dxdv\right)^{\frac{1}{2}}\left(\int_{\mathbb{T}^2\times\mathbb{R}^2}|v-u_{\infty}|^2f \,dxdv\right)^{\frac{1}{2}}\leq C e^{-\frac{\lambda_0}2 t}\cdotp
\end{align*}
Together with \eqref{nH-1}, we get
\begin{align*}
W_1(f,n_{\infty}(x-u_{\infty}t)\otimes\delta_{v=u_{\infty}})
&\leq W_1(f,n_{f}\otimes\delta_{v=u_{\infty}})+W_1(n_{f}\otimes\delta_{v=u_{\infty}},n_{\infty}(x-t u_{\infty})\otimes\delta_{v=u_{\infty}})\\
&\leq W_1(f,n_{f}\otimes\delta_{v=u_{\infty}})+\|n_{f}-n_{\infty}(x-t u_{\infty})\|_{\dot H^{-1}}\\
&\leq C e^{-\frac{{\lambda_0}}{2}t}.
\end{align*}
Note that we used that the last term of the first line is controlled by the $\dot H^{-1}(\T^2)$ norm, 
since $\|\psi(\cdot,u_\infty)\|_{\dot H^1_x}\leq C\|\nabla_x\psi\|_{L^\infty}.$
This completes the proof of \eqref{decayff}.
\medbreak
Finally, we observe from the definition of $\widetilde n_f$ in \eqref{eq:wtnf} and the boundedness of $n_f,$
that $\widetilde n_f(t)$ is uniformly bounded on $L^\infty(\T^2).$ Hence, remembering that $\widetilde n_f(t)\to n_\infty$
as $t$ tends to $\infty,$ and using the standard compactness result for the
weak $*$ topology of $L^\infty$, one can conclude that $n_\infty$ is bounded.


\subsection{Construction of the solutions}\label{subsectionproofhomo}

For the reader's convenience, we here explain how to construct global solutions 
of  \eqref{homoVNS} with  the properties listed in Theorems \ref{theorem1} and \ref{theorem11}.  
\smallbreak
Let $(f_0,u_0)$ satisfy \eqref{a1}.  Arguing as in  \cite{Baranger2006}, 
 we regularize the initial data  (for all $k\in\N$)  as follows:
\begin{equation}\label{weakdata}
f_{0}^{k}(x,v):=J_{1}^{k}\ast J_{2}^{k}\ast(f_{0} \phi(|v|/k))(x,v),\quad u_{0}^{k}(x):=J_{1}^{k}\ast u_{0}(x),
\end{equation}
where $J_{1}^{k}$ and $J_{2}^{k}$ are  Friedrichs mollifiers with respect to the variables $x$ and $v$, respectively, and $\phi\in \cC_{c}^{\infty}(\mathbb{R};[0,1])$ is some  cut-off function  supported in $[-2,2]$ such that $\phi\equiv1$ on $[-1,1].$

For every $k\in\mathbb N$, one can show that there exists a maximal time $T_k$ such that System \eqref{homoVNS} supplemented with initial data $(f_0^k,u_0^{k})$ has a unique maximal strong  solution $(f^k,u^k,P^k)$ on the time interval $[0,T_k),$ which satisfies
for all $T\in(0,T_k)$:
\begin{equation*}
   0\leq  f^k\in \cC([0,T]; H^2_{x,v}),\quad u^k\in \cC([0,T]; H^2)\cap L^2(0,T; H^3)\andf \nabla  P^k\in L^2(0,T; H^1).
\end{equation*}
The proof can be done by a standard iteration process. Getting 
 higher order estimates of $f^k$ and $u^k$ on a short time interval does not present any particular difficulty.
 As we shall see below, the important point is that $f^k$ is compactly supported with respect to the $v$ variable so that integration in 
 the Brinkman term can be restricted to a bounded set.
 Being smooth,  $(f^k,u^k,P^k)$  satisfies all the estimates
that have been established so far, on $[0,T_k)$.  
 \smallbreak  
In order to show that the solution is global, 
let us  assume  by contradiction that  $T_k<\infty.$ In what follows, for any  $T\in(0,T_k),$ we denote by  $C_{k,T}$ a constant that  may depend on $k$ and $T$ but not on $t\in[0,T]$.
Now, since the solution is (reasonably) smooth, using the energy balance and 
following the lines of Lemma \ref{c:ineq}, \emph{but  without time weights}, one gets 
     \begin{equation*}
 \sup_{t\in[0,T]} \Big(\| u^k(t)\|_{H^2}+\|\dot u^k(t)\|_{L^2}+\|\nabla P^k(t)\|_{L^2}+\|(n_{f^k},j_{f^k},e_{f^k})(t)\|_{L^\infty}\Big)+
\|\dot u^k\|_{L^2(0,T;H^1)}\leq C_{k,T}.
    \end{equation*}
    Then, using \eqref{eq:Lip1} and \eqref{eq:GNinfty}, we arrive  at
     \begin{equation}\label{Lipk}
     \begin{aligned}
     \int_0^T \|\nabla u^k\|_{L^{\infty}}\,dt&
 \leq C_{k,T}.
     \end{aligned}
    \end{equation}
    Furthermore,  Lemma \ref{c:ineq} and suitable embedding ensure that
    $u^k\in L^\infty([0,T]\times{{\mathbb{T}^2}}).$ Hence, using Formula \eqref{repr1} and 
    Relation \eqref{repr2}, we discover that there exists $R_{k,T}>0$ depending only on $k,T$ and on the data such that
\begin{equation}
\text{{\rm Supp}}_{v}~f^k(t,x,v)\in \{ v\in \mathbb{R}^2~:~|v|\leq R_{k,T}\}.\label{compact}
\end{equation}
Next, differentiating $\eqref{homoVNS}_{1}$ with respect to $x_{i}$ for $i=1,2,$ and then taking the $L^2_{x,v}$ inner product of the resulting equation with $\partial_{x_i}f^k$, we get
\begin{align*}
\frac{1}{2}\frac{d}{dt}\|\partial_{x_i}f^k\|_{L^2_{x,v}}^2&=\int_{\mathbb{T}^2\times\mathbb{R}^2}\partial_{x_i}\left ((u^k-v)f^k \right) \nabla_{v} \partial_{x_i}f^k\,dxdv\\
&= \|\partial_{x_i}f^k\|_{L^2_{x,v}}^2-\int_{\mathbb{T}^2\times\mathbb{R}^2}\partial_{x_i} u^k\cdot\nabla_vf^k\partial_{x_i}f^k\,dxdv\\
&\leq \|\partial_{x_i}f^k\|_{L^2_{x,v}}^2+\|\nabla_x u^k\|_{L^{\infty}} \|\nabla_vf^k\|_{L^{2}_{x,v}} \|\partial_{x_{i}}f^k\|_{L^2_{x,v}}.
\end{align*}
After differentiating $\eqref{homoVNS}_{1}$ with respect to $v_{i}$, $i=1,2$, and taking the $L^2_{x,v}$ inner product
 with $\partial_{v_i}f^k$, one also deduces that
\begin{align*}
\frac{1}{2}\frac{d}{dt}\|\partial_{v_i}f^k\|_{L^2_{x,v}}^2&=\int_{\mathbb{T}^2\times\mathbb{R}^2}\left( -\partial_{x_i}f^k\partial_{v_i}f^k+\partial_{v_i}\left ((u^k-v)f^k \right) \nabla_{v} \partial_{v_i}f^k \right)\,dxdv\\
&\leq \|\partial_{x_i}f^k\|_{L^2_{x,v}}\|\partial_{v_i}f^k\|_{L^2_{x,v}}+2\|\partial_{v_i}f^k\|_{L^2_{x,v}}^2.
\end{align*}
Hence, it holds that for all $t\in[0,{{T}}],$ we have
$$\|(\nabla_{x}f^k,\nabla_{v}f^k)(t)\|_{L^2_{x,v}}\leq \|(\nabla_{x}f^k_0,\nabla_{v}f^k_0)\|_{L^2_{x,v}}+C\int_0^t
\bigl(1+\|\nabla_xu^k\|_{L^\infty}\bigr)\|(\nabla_{x}f^k,\nabla_{v}f^k)(\tau)\|_{L^2_{x,v}}\,d\tau,$$
whence, by Gr\"onwall's lemma and \eqref{Lipk}, 
\begin{equation*}\sup_{t\in[0,T]}\|(\nabla_{x}f^k,\nabla_{v}f^k)(t)\|_{L^2_{x,v}}\leq Ce^{CT+C\|\nabla u^k\|_{L^1(0,T;L^{\infty})}}\|(\nabla_{x}f^k_0,\nabla_{v}f^k_0)\|_{L^2_{x,v}}\leq C_{k,T}.\end{equation*}
Similarly, by differentiating the equations once more,  one can obtain that for any $0<T<T_k$,
\begin{equation*}
\sup_{t\in[0,T]}\Big(\|f^k(t)\|_{H^2_{x,v}}+\|u^k(t)\|_{H^2} \Big)\leq C_{k,T}. 
\end{equation*}
This uniform control enables us to solve  \eqref{homoVNS} with initial data  
 $(f^k,u^k)(t)$  from any time $t< T_{k},$  and  to continue  the solution $(f^k,u^k,P^k)$ beyond $T_k.$
  This contradicts the maximality of $T_k$. Therefore, we must have $T_k=\infty.$
  \smallbreak
  From this point, one can argue exactly as in \cite{Baranger2006} and conclude that 
  $(f^k,u^k,P^k)$ converges in the distributional meaning to some solution $(f,u,P)$ of System \eqref{homoVNS}.
  Furthermore, since all the bounds of the previous subsections are satisfied uniformly, they are also valid for 
  $(f,u,P).$ This completes the proof of both Theorems \ref{theorem1} and \ref{theorem11}.

\section{
The inhomogeneous case}\label{sectioninhomo}

This section aims to extend the decay results of the homogeneous incompressible Vlasov-Navier-Stokes system \eqref{homoVNS} to the inhomogeneous one \eqref{VNS}: we want to establish Theorems \ref{theorem1:in} and \ref{theorem11:in}. 
Since the overall approach is rather similar to what we did in the previous section, we just point out the main 
differences compared to the homogeneous case.
\medbreak 
Let us set 
\begin{equation*}
\rho_*:= \operatorname{ess}\inf\limits_{x\in\mathbb{T}^2}\rho_0(x)\geq 0\andf \rho^*:=
 \operatorname{ess}\sup\limits_{x\in\mathbb{T}^2}\rho_0(x)<\infty.\end{equation*}
The first  observation is that since $\rho$ satisfies a transport equation by a divergence-free vector field, we have, as long as the solution is defined,
 \begin{equation}\label{rhoupper}
\langle\rho(t)\rangle = \langle\rho_0\rangle \andf \rho_*\leq \rho(t,x) \leq \rho^*\ \hbox{ for a.e.}\ x\in\T^2.
\end{equation}
 Now,  Theorem \ref{theorem1:in} relies on the counterpart of  Proposition \ref{propalge} for solutions to \eqref{VNS}.
 We have to take care of the fact that   the modulated energy defined in \eqref{eq:modulated:in} also depends on $\rho$: instead of 
 \eqref{particleed111}, we write that
\begin{multline*}
\int _{\mathbb{T}^2\times\mathbb{R}^2} |v-u|^2 f\,dxdv
\geq \frac{1}{2} \int _{\mathbb{T}^2\times\mathbb{R}^2} \left|v-\frac{\langle j_{f}\rangle}{\langle n_{f}\rangle}\right|^2 f\, dxdv\\
+\frac{{\|n_f\|_{L^1}}}{2}\left|\frac{\langle j_{f}\rangle}{\langle n_{f}\rangle}-\frac{\langle \rho u\rangle}{\langle \rho\rangle}\right|^2  -3 \int_{\mathbb{T}^2} \left|u-\frac{\langle \rho u\rangle}{\langle \rho\rangle}\right|^2 n_{f}\,dx,
\end{multline*}
which obviously implies that 
\begin{equation}\label{eq:H}\mathbf{H}\leq  \int _{\mathbb{T}^2\times\mathbb{R}^2} |v-u|^2 f\,dxdv+\frac12 \int_{\mathbb{T}^2} \left|u-\frac{\langle \rho u\rangle}{\langle \rho\rangle}\right|^2\rho\,dx+3 \int_{\mathbb{T}^2} \left|u-\frac{\langle \rho u\rangle}{\langle \rho\rangle}\right|^2 n_{f}\,dx.
\end{equation}
 We observe that
\begin{align*}
\int_{\mathbb{T}^2} \rho \left|u-\frac{\langle \rho u\rangle}{\langle \rho\rangle}\right|^2\,dx
&\leq 2{{\int_{\mathbb{T}^2}}}\rho |u-\langle u\rangle|^2\,{{dx}}+\frac{2{{|\mathbb{T}^2|}}}{\langle\rho\rangle} 
|\langle\rho\rangle\langle u\rangle-\langle \rho u\rangle|^2.
\end{align*}
The first term of the right-hand side can be bounded from H\"older and Poincar\'e inequalities. 
To handle the second term, we combine Cauchy-Schwarz, Poincar\'e and H\"older inequalities and get 
\begin{equation}\label{eq:H2}
 |\langle\rho\rangle\langle u\rangle-\langle \rho u\rangle|^2\leq \|\rho\|_{L^2}^2\|u-\langle u\rangle\|_{L^2}^2
 \leq c_{\T^2}\|\rho\|_{L^2}^2\|\nabla u\|_{L^2}^2\leq c_{\T^2}  \|\rho\|_{L^1}\|\rho\|_{L^{\infty}}\|\nabla u\|_{L^2}^2.\end{equation}
In the end, we thus have 
 \begin{equation}
 \int_{\mathbb{T}^2} \rho \left|u-\frac{\langle \rho u\rangle}{\langle \rho\rangle}\right|^2\,dx
 \leq c_{\T^2}\rho^*\|\nabla u\|_{L^2}^2.\label{fluidenergy}
 \end{equation}
To bound the last term of \eqref{eq:H}, we use the fact that, owing to Young's inequality and \eqref{eq:H2},
 \begin{align}
  \int_{\mathbb{T}^2} \left|u-\frac{\langle \rho u\rangle}{\langle \rho\rangle}\right|^2 n_{f}\,dx&\leq
  2 \int_{\mathbb{T}^2} \left|u-\langle u\rangle\right|^2 n_{f}\,dx+2\| n_{f}\|_{L^1}  \left|{{\langle u\rangle}}-\frac{\langle \rho u\rangle}{\langle \rho\rangle}\right|^2\nonumber\\
  &\leq 2 \int_{\mathbb{T}^2} \left|u-\langle u\rangle\right|^2 n_{f}\,dx+c_{\T^2}\| n_{f}\|_{L^1}\rho^*\|\rho_0\|_{L^1}^{-1}\|\nabla u\|_{L^2}^2.  \label{aaabbbbb}
  \end{align}
  Bounding the first term by means of \eqref{u2f}, we eventually get 
  $$
  \int_{\mathbb{T}^2} \left|u-\frac{\langle \rho u\rangle}{\langle \rho\rangle}\right|^2 n_{f}\,dx\leq
  c_{\T^2}\|\nabla u\|_{L^2}^2\biggl(1+\|f_0\log f_0\|_{L^1} +\mathbf{M_0}(\rho^*\|\rho_0\|_{L^1}^{-1}+t)+\mathbf{H}_0\biggr)\cdotp
  $$
Reverting to \eqref{eq:H} and using \eqref{fluidenergy}, we conclude that 
  \begin{align}
  \mathbf{H}\leq c_{\T^2}\biggl(1+{\mathbf{H}_0+\mathbf{R_0}}+\|f_0\log f_0\|_{L^1} +\mathbf{M_0}t\biggr)\mathbf{D}
  \with \mathbf{R_0}:=\rho^*(1+\mathbf{M}_0\|\rho_0\|_{L^1}^{-1}). \label{Hinssss}
  \end{align}
Inserting \eqref{Hinssss} into the differential equality resulting from \eqref{energym:in} and arguing as in the proof of Lemma \ref{lemma51}, we conclude that \eqref{behavior1:in} is valid.

\medbreak 
Next, upgrading the algebraic {convergence rate}s to the exponential one in the case of small $f_0$ is based on the following adaptation 
of Lemma \ref{lemma32}. 
\begin{lemma}\label{lemmafunctionalin}
Let $(f,\rho,u,P)$ be a smooth solution to System \eqref{VNS} and assume that 
\begin{equation}\label{eq:novacuum}
\rho_*:={{\operatorname{ess}\inf\limits_{x\in\mathbb{T}^2}}}\rho_0(x)>0\andf  \rho^*:={{\operatorname{ess}\sup\limits_{x\in\mathbb{T}^2}}}\rho_0(x)<\infty.\end{equation}
There exists a constant $C>1$ depending only on $\T^2$ such that
the following properties hold:
\begin{itemize}
\item $H^1$-estimates{\rm:}
\begin{multline}\label{dpes021110000:in}
 \frac{d}{dt} \|\nabla u\|_{L^2}^2+\|\sqrt{\rho}\dot{u}\|_{L^2}^2+\frac{1}{C\rho^*}(\|\nabla^2u\|_{L^2}^2+\|\nabla P\|_{L^2}^2)\\
\leq C\|\nabla u\|_{L^2}^4+\frac C{\rho_{*}}\|n_{f}\|_{L^{\infty}} \int_{\mathbb{T}^2\times\mathbb{R}^2} |u-v|^2f\, dxdv
  \andf  \end{multline}
\begin{multline}\label{L1in}
\frac{d}{dt}\mathbf{D}+\|\sqrt{\rho}\dot{u}\|_{L^2}^2+\int_{\mathbb{T}^2\times\mathbb{R}^2} |u-v|^2f\, dxdv+\frac{1}{C\rho^*}(\|\nabla^2u\|_{L^2}^2+\|\nabla P\|_{L^2}^2)\\
\leq C\|\nabla u\|_{L^2}^4+C(\|n_{f}\|_{L^{\infty}}+\|\nabla u\|_{L^{\infty}})\int_{\mathbb{T}^2\times\mathbb{R}^2} |u-v|^2f\, dxdv.
\end{multline}
\item  {Material derivative estimate\rm:}
\begin{multline}
 \label{L2in}\frac{d}{dt}\int_{\mathbb{T}^2} (\rho|\dot{u}|^2-(P-\langle P\rangle)\nabla u: (\nabla u)^{\top})\,dx+ \|\nabla \dot{u}\|_{L^2}^2+  \|\sqrt{n_f}\dot{u}\|_{L^2}^2\\
\leq C\Bigl((\rho^*+(\rho_*)^{-1})\|\nabla u\|_{L^2}^2+\rho^*\|u\|_{L^\infty}^2\Bigr)\|\sqrt{\rho}\dot{u}\|_{L^2}^2 +C\bigg(1+\|e_f\|_{L^{\infty}}+\|n_{f}\|_{L^{\infty}}\bigl(1+ \|u\|_{L^{\infty}}^2\bigr)\\
+\bigl(\|n_{f}\|_{L^{\infty}}+\|n_{f}\|_{L^{\infty}}^2\bigr)\mathbf{D}\bigg) \int_{\mathbb{T}^2\times\mathbb{R}^2}|u-v|^2f\, dxdv.
\end{multline}
\end{itemize}
\end{lemma}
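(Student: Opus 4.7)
My plan is to revisit each of the three estimates of Lemma \ref{lemma32} and track the changes introduced by the variable density $\rho$. Since $\rho$ is transported by the divergence-free field $u$, the identity $(\partial_t+u\cdot\nabla)(\rho g)=\rho(\partial_t+u\cdot\nabla)g$ holds for any scalar or vector $g$, so the natural $L^2$-substitute for $\dot u$ is $\sqrt\rho\,\dot u$. In particular, $\|\rho\dot u\|_{L^2}^2\leq\rho^*\|\sqrt\rho\,\dot u\|_{L^2}^2$, while $\|\dot u\|_{L^2}^2\leq(\rho_*)^{-1}\|\sqrt\rho\,\dot u\|_{L^2}^2$ and $|\langle\dot u\rangle|\leq(|\T^2|\rho_*)^{-1/2}\|\sqrt\rho\,\dot u\|_{L^2}$. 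Rewriting the momentum equation as the Stokes system $-\Delta u+\nabla P=-\rho\dot u-\int_{\R^2}(u-v)f\,dv$ and invoking Lemma \ref{lemmastokes} therefore yields
\begin{equation*}
\|\nabla^2u\|_{L^2}^2+\|\nabla P\|_{L^2}^2\lesssim\rho^*\|\sqrt\rho\,\dot u\|_{L^2}^2+\|n_f\|_{L^\infty}\||u-v|^2f\|_{L^1_{x,v}}.
\end{equation*}
These three observations explain the appearance of every $\rho^*$ and $\rho_*^{-1}$ weight in the statement.

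\textbf{Proof of the $H^1$ estimates.} Taking the $L^2$ inner product of the momentum equation with $\dot u$, the nonlinear and pressure terms are handled exactly as in \eqref{eq:nablaP}--\eqref{ddtnablauhomo3}, via BMO--$\mathcal H^1$ duality and the div-curl lemma (Lemma \ref{lemmaBMO}), since those arguments only use $\div u=0$ and are insensitive to $\rho$. The Brinkman term is estimated by a $\rho$-weighted Young inequality,
\begin{equation*}
\Bigl|\!\int_{\T^2\times\R^2}(u-v)f\cdot\dot u\,dxdv\Bigr|\leq\frac{1}{4}\|\sqrt\rho\,\dot u\|_{L^2}^2+\frac{1}{\rho_*}\|n_f\|_{L^\infty}\||u-v|^2f\|_{L^1_{x,v}},
\end{equation*}
which produces the $(\rho_*)^{-1}$ factor. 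Absorbing $\|\nabla^2u\|_{L^2}^2+\|\nabla P\|_{L^2}^2$ on the left by means of the weighted Stokes estimate above, with coefficient $(\rho^*)^{-1}$, yields \eqref{dpes021110000:in}. For \eqref{L1in}, the Vlasov equation is used exactly as in the proof of \eqref{dpes0211100001} to rewrite $-\int\dot u\cdot(u-v)f\,dxdv$ as $-\frac12\frac{d}{dt}\||u-v|^2f\|_{L^1_{x,v}}-\||u-v|^2f\|_{L^1_{x,v}}+\int((v-u)\cdot\nabla u)\cdot(u-v)f\,dxdv$, the last integral being bounded by $\|\nabla u\|_{L^\infty}\||u-v|^2f\|_{L^1_{x,v}}$.

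\textbf{Proof of the material derivative estimate.} Applying $\partial_t+u\cdot\nabla$ to the momentum equation and using the identity $(\partial_t+u\cdot\nabla)(\rho\dot u)=\rho\ddot u$, we obtain for $\dot u$ the analogue of \eqref{rhorutt} in which the second-order time term is $\rho\ddot u^j$. Testing against $\dot u$ and invoking continuity once more gives
\begin{equation*}
\frac{1}{2}\frac{d}{dt}\|\sqrt\rho\,\dot u\|_{L^2}^2+\|\nabla\dot u\|_{L^2}^2+\|\sqrt{n_f}\,\dot u\|_{L^2}^2=\sum_{i=1}^{5}I_i,
\end{equation*}
with the same $I_1,\ldots,I_5$ as in Lemma \ref{lemma32}. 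The bound on $I_2$ is unchanged; $I_4$ yields the term $(\|e_f\|_{L^\infty}+\|n_f\|_{L^\infty}\|u\|_{L^\infty}^2)\||u-v|^2f\|_{L^1_{x,v}}$ as before. In the bounds for $I_1$ and $I_3$, every occurrence of $\|\nabla^2u\|_{L^2}^2$ or $\|\nabla P\|_{L^2}^2$ is replaced via the weighted Stokes estimate, producing the coefficient $\rho^*\|\nabla u\|_{L^2}^2$, while the explicit norm $\|\dot u\|_{L^2}$ appearing in $I_3$ is controlled by $(\rho_*)^{-1/2}\|\sqrt\rho\,\dot u\|_{L^2}$, producing the coefficient $(\rho_*)^{-1}\|\nabla u\|_{L^2}^2$; the sum gives exactly the $(\rho^*+\rho_*^{-1})\|\nabla u\|_{L^2}^2\|\sqrt\rho\,\dot u\|_{L^2}^2$ contribution. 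For $I_5$, I split
\begin{equation*}
I_5=-\sum_j\int_{\T^2}\partial_t\partial_jP\,\dot u^j\,dx-\sum_j\int_{\T^2}u\cdot\nabla\partial_jP\,\dot u^j\,dx.
\end{equation*}
The first piece is treated by time-integration by parts exactly as in the proof of Lemma \ref{lemma32}: it produces the Lyapunov-type term $\frac{d}{dt}\int(P-\langle P\rangle)\nabla u:\nabla u\,dx$, which is moved to the left, a cubic remainder $\sum\int(P-\langle P\rangle)\mathrm{Tr}(\nabla u)^3\,dx$ which vanishes in two dimensions thanks to $\div u=0$, and a bilinear leftover estimated via BMO--$\mathcal H^1$ duality as $\lesssim\|\nabla\dot u\|_{L^2}\|\nabla P\|_{L^2}\|\nabla u\|_{L^2}$. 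The second piece is integrated by parts in $x$, using $\div u=0$, to obtain $\int\nabla P\cdot(u\cdot\nabla)\dot u\leq\|\nabla P\|_{L^2}\|u\|_{L^\infty}\|\nabla\dot u\|_{L^2}$; Young's inequality together with the Stokes estimate then produces the genuinely new term $\rho^*\|u\|_{L^\infty}^2\|\sqrt\rho\,\dot u\|_{L^2}^2$ of \eqref{L2in}, plus a supplementary Brinkman-type remainder absorbed into $\|n_f\|_{L^\infty}(1+\|u\|_{L^\infty}^2)\||u-v|^2f\|_{L^1_{x,v}}$.

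\textbf{Main obstacle.} The only genuinely new feature compared to Lemma \ref{lemma32} is the systematic bookkeeping of the density weights: estimates that recover $\dot u$ through the Stokes system cost a factor $\rho^*$, while those using $\|\dot u\|_{L^2}$ directly cost $(\rho_*)^{-1}$. The term $\rho^*\|u\|_{L^\infty}^2\|\sqrt\rho\,\dot u\|_{L^2}^2$ is specific to the inhomogeneous setting and arises from the advective pressure contribution $u\cdot\nabla\nabla P$ in $I_5$, which cannot be absorbed into a pure time derivative as in the homogeneous case; once the weights are carefully tracked, no qualitatively new analytic ingredient is required.
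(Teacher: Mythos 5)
Your treatment of the two $H^1$ estimates coincides with the paper's: the same weighted Stokes bound $\|\nabla^2u\|_{L^2}^2+\|\nabla P\|_{L^2}^2\lesssim\rho^*\|\sqrt\rho\,\dot u\|_{L^2}^2+\|n_f\|_{L^\infty}\||u-v|^2f\|_{L^1_{x,v}}$, the same $\rho$-weighted Young inequality for the Brinkman term (the source of the $\rho_*^{-1}$), and the same reduction of \eqref{L1in} to the Vlasov identity. For the material derivative estimate you deviate from the paper in where the term $\rho^*\|u\|_{L^\infty}^2\|\sqrt\rho\,\dot u\|_{L^2}^2$ originates. The paper keeps $\rho\partial_t\dot u^j$ in the differentiated equation, so that testing against $\dot u$ produces $\frac12\frac{d}{dt}\int\rho|\dot u|^2-\frac12\int\rho_t|\dot u|^2$; the continuity equation turns the second integral into $-\sum_{i,j}\int\rho u^j\dot u^i\partial_j\dot u^i$, bounded by $\frac14\|\nabla\dot u\|_{L^2}^2+\|\sqrt\rho\,u\|_{L^\infty}^2\|\sqrt\rho\,\dot u\|_{L^2}^2$, and the pressure term $I_5$ is then handled exactly as in the homogeneous case (combined time/advective integration by parts, trace-cubic cancellation, BMO duality), only replacing $\|\nabla P\|_{L^2}$ by the weighted Stokes bound. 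You instead exploit the exact identity $\int\rho\ddot u\cdot\dot u=\frac12\frac{d}{dt}\|\sqrt\rho\,\dot u\|_{L^2}^2$ (no leftover from the left-hand side) and recover the $\rho^*\|u\|_{L^\infty}^2$ weight from the advective pressure piece $-\int(u\cdot\nabla)\partial_jP\,\dot u^j\,dx$, bounded by $\|u\|_{L^\infty}\|\nabla P\|_{L^2}\|\nabla\dot u\|_{L^2}$. Both routes land on \eqref{L2in}; your bookkeeping of the $\rho^*$ and $\rho_*^{-1}$ weights in $I_1$ and $I_3$ agrees with the paper's.

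There is, however, a gap in your split of $I_5$. When you integrate the piece $-\int\partial_t\partial_jP\,\dot u^j\,dx=\int\partial_tP\,\partial_ju^i\partial_iu^j\,dx$ by parts in time and convert $u_t$ into $\dot u-u\cdot\nabla u$, you obtain not only the Lyapunov term, the bilinear BMO-controlled term and the vanishing ${\rm Tr}\,(\nabla u)^3$ term, but also the commutator contribution $\int(P-\langle P\rangle)\,u\cdot\nabla(\partial_ju^i\partial_iu^j)\,dx=-\int(u\cdot\nabla P)\,\partial_ju^i\partial_iu^j\,dx$. In the paper's combined treatment this is precisely the term cancelled against the advective pressure contribution $\int(u\cdot\nabla P)\,\div\dot u\,dx$ --- which is why the two pieces are kept together there. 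Since you estimate the advective piece by a different route, this leftover survives and must be bounded separately, e.g.\ by $\|u\|_{L^\infty}\|\nabla P\|_{L^2}\|\nabla u\|_{L^4}^2\lesssim\|u\|_{L^\infty}\|\nabla P\|_{L^2}\|\nabla u\|_{L^2}\|\nabla^2u\|_{L^2}$, which after Young's inequality and the weighted Stokes estimate is absorbed into $\rho^*(\|\nabla u\|_{L^2}^2+\|u\|_{L^\infty}^2)\|\sqrt\rho\,\dot u\|_{L^2}^2$ plus Brinkman remainders with coefficients $\|n_f\|_{L^\infty}\|u\|_{L^\infty}^2$ and $\|n_f\|_{L^\infty}\mathbf{D}$. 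So the estimate still closes with the stated right-hand side, but the term has to be accounted for.
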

\begin{proof} We only indicate what has to be modified compared to the proof of Lemma \ref{lemma32}. In order to establish \eqref{dpes021110000:in}, we argue as for proving \eqref{dpes021110000}, first taking the $L^2$ scalar product
of the velocity equation with $\dot u$. Clearly, the former term $\|\dot u\|_{L^2}^2$ will become $\|\sqrt\rho \dot u\|_{L^2}^2$ and 
the Stokes equation \eqref{eq:stokes} now reads
 \begin{equation}\label{eq:stokesinh}
  -\Delta u+\nabla P=-\rho\dot{u}-\int_{\mathbb{R}^2}  (u-v)f\,dv.
  \qquad\div u=0,
 \end{equation}
 whence 
 \begin{equation}\label{eq:stokesinhb}
 \|\nabla^2 u\|_{L^2}^2+\|\nabla P\|_{L^2}^2
 \leq  C\rho^*\|\sqrt\rho\dot{u}\|_{L^2}^2+C\|n_f\|_{L^\infty}\int_{\mathbb{T}^2\times\mathbb{R}^2}
 |u-v|^2f\,dxdv.  \end{equation}
 This leads to \eqref{dpes021110000:in} (the appearance of $\rho_*^{-1}$ is due to the fact that $\dot u$ has to be eventually replaced by 
 $\sqrt\rho \dot u$ in \eqref{ddtnablauhomo4}).
 \medbreak
Proving  \eqref{L1in} is the same as for \eqref{dpes0211100001}.
For getting \eqref{L2in}, we observe that due to $\rho_t+\div(\rho u)=0$ and $\div u=0,$ we have instead of \eqref{rhorutt},
\begin{multline*}
\rho \partial_{t}\dot{u}^{j}- \Delta \dot{u}^{j} + n_{f} \dot{u}^{j}=-  
\partial_{i}( (\partial_{i}u\cdot\nabla) \nabla u^{j})- \div (\partial_{i}u \partial_{i}u^{j})\\
-\partial_{j}\partial_{t}P-(u\cdot\nabla )\partial_{j}P-\int_{\mathbb{R}^2}(u^j-v^j)(u-v)\cdot \nabla_{x}f\, dv+\int_{\mathbb{R}^2}(u^j-v^j)f\,dv.\end{multline*}
Consequently, when taking the $L^2$ scalar product with $\dot u^j$ and summing over $j=1,2$, a new term appears since
\begin{align*}
\int_{\T^2} \rho\dot u\cdot \partial_{t}\dot{u}\,dx&=\frac12\frac d{dt}\int_{\T^2}\rho|\dot u|^2\,dx
-\frac12\int_{\T^2}\rho_t|\dot u|^2\,dx\\
&=\frac12\frac d{dt}\int_{\T^2}\rho|\dot u|^2\,dx
+\frac12\int_{\T^2}\div(\rho u) |\dot u|^2\,dx\\&=\frac12\frac d{dt}\int_{\T^2}\rho|\dot u|^2\,dx
-\sum_{i,j}\int_{\T^2}\rho u^j\dot u^i\partial_j\dot u^i\,dx.
\end{align*}
The last term may be just bounded as follows:
$$-\sum_{i,j}\int_{\T^2}\rho u^j\dot u^i\partial_j\dot u^i\,dx\leq \frac14\|\nabla \dot u\|_{L^2}^2+\|\sqrt\rho\, u\|_{L^\infty}^2\|\sqrt\rho\,\dot u\|_{L^2}^2.
$$
Then, we have to bound all the terms $I_j$ in the right-hand side of \eqref{311} (their definition is unchanged).
For $I_2$ and $I_4,$ this is the same as in  the homogeneous case. 
For $I_1,$  we have to take care of the fact that the term $\|\nabla^2u\|_{L^2}$ in \eqref{eq:I1} has to be bounded 
by means of \eqref{eq:stokesinhb}, whence the term $\rho^*\|\nabla u\|_{L^2}^2\|\sqrt\rho u\|_{L^2}^2$ in \eqref{L2in}. 
The same occurs in $I_5$ when bounding $\|\nabla P\|_{L^2}.$
Finally, one can proceed as in the homogeneous case for handling $I_3$ except that, 
again, $\|\nabla^2 u\|_{L^2}$ has to be bounded according to \eqref{eq:stokesinhb} 
and that we have to use that
$$\sqrt{|\T^2|}\langle \dot u\rangle \leq \|\dot u\|_{L^2}\leq \frac1{\sqrt{\rho_*}}\|\sqrt\rho \dot u\|_{L^2}.$$
This completes the proof of  \eqref{L2in}.  
 \end{proof}



The next step in order to get exponential convergence estimates is to adapt  Lemma \ref{c:ineq} to the inhomogeneous setting.
By following its proof and observing that $\langle\rho\rangle^{-1}{\langle \rho u\rangle}-\bar u_\infty$ can be estimated by means of the modulated energy since 
\eqref{mass:in} holds and 
$$\frac{\langle \rho u\rangle}{\langle\rho\rangle}-\bar u_\infty =\frac{\langle n_f\rangle}{\langle n_f\rangle+\langle\rho\rangle}\Biggl(\frac{\langle \rho u\rangle}{\langle\rho\rangle}-\frac{\langle j_f\rangle}{\langle n_f\rangle}\Biggr),$$
we get the following result.
\begin{lemma}
Under Condition \eqref{eq:novacuum}, we have for any $q>4$ and given time $T>0$,  
  \begin{align}
& \sup_{t\in[0,T]}\|f(t)\|_{L^1_v(L^{\infty}_x)}\leq N_T^*,\quad \sup_{t\in[0,T]}\|vf(t)\|_{L^1_v(L^{\infty}_x)}\leq J_T^*, \quad \sup_{t\in[0,T]}\|v^2f(t)\|_{L^1_v(L^{\infty}_x)}\leq E_T^*\label{nfT:in},
 \end{align}
 where 
 \begin{align*}
     N^*_T&=\||v-\bar u_\infty|^3f_0\|_{L^\infty_{x,v}}+e^{C T} e^{C\mathbf{H}_0}(1+\|f_0\|_{L^{\infty}_{x,v}}^3)\|f_0\|_{L^{\infty}_{x,v}},\\
     J^*_T&=\| |v-\bar u_\infty|^4 f_0\|_{L^{\infty}_{x,v}}+|\bar u_\infty|N_T^*+e^{C T} e^{C\mathbf{H}_0}(1+N_T^*)\|f_0\|_{L^{\infty}_{x,v}},\\
     E^*_T&=\| |v-\bar u_\infty|^q f_0\|_{L^{\infty}_{x,v}}+|\bar u_\infty|^2 N_T^*+e^{C T} e^{C\mathbf{H}_0}(1+(N_T^*)^{q/4})\|f_0\|_{L^{\infty}_{x,v}}.
 \end{align*}
 Furthermore, there exists a constant $C\geq1$ depending on $\T^2$, $\|\rho_0\|_{L^1}$ and $\|(\rho_0,\rho_0^{-1})\|_{L^{\infty}}$ and a constant $C_T>0$ depending on $T$ and on the data such that
 \begin{align}
 &{\sup_{t\in[0,T]} \||u(t)-v|^2f(t)\|_{L^1_{x,v}}\leq C(1+N^*_T)\mathbf{H}_0, }\label{Coro:1:in0}\\
 &\sup_{t\in[0,T]}t \|\nabla u(t)\|_{L^2}^2+\int_0^T t \|(\sqrt\rho\,\dot u,\nabla^2u,\nabla P)\|_{L^2}^2 \, dt\leq C\bigl(1+ N^*_T\bigr) \mathbf{H}_0\: e^{C\mathbf{H}_0},\label{Coro:1:in}\\\
\label{Coro:2:in}
&\sup_{t\in[0,T]}t^2\Big(\|\sqrt\rho\,\dot{u}(t)\|_{L^2}^2+\|\nabla^2 u(t)\|_{L^2}^2+\|\nabla P(t)\|_{L^2}^2\Big)+\int_0^Tt^2\|(\nabla \dot u,\sqrt{n_f}\dot{u})\|_{L^2}^2 \, dt \leq C_{T}.
\end{align}
\end{lemma}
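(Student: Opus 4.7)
The plan is to transplant the proof of Lemma \ref{c:ineq} to the inhomogeneous setting, replacing Lemma \ref{lemma32} by Lemma \ref{lemmafunctionalin} and tracking the density weights coming from the Stokes estimate \eqref{eq:stokesinhb}. First I would apply the Vlasov representation formula, exactly as in \eqref{eq:nf0}, with the shift $\bar v=\bar u_\infty$, to obtain
\begin{equation*}
\|f(t)\|_{L^1_v(L^\infty_x)}\lesssim e^{-t}\||v-\bar u_\infty|^3 f_0\|_{L^\infty_{x,v}}+e^{2t}\bigl(1+\|u-\bar u_\infty\|_{L^1_t(L^\infty)}^3\bigr)\|f_0\|_{L^\infty_{x,v}},
\end{equation*}
together with the analogous bounds for $vf$ and $|v|^2 f$. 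The whole task then reduces to controlling $\|u-\bar u_\infty\|_{L^1_t(L^\infty)}$ in terms of $\mathbf{H}_0$ and a weighted integral of $\|\nabla^2u\|_{L^2}^2$.

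For the velocity bound I would split
\begin{equation*}
\|u-\bar u_\infty\|_{L^\infty}\le \Bigl\|u-\tfrac{\langle\rho u\rangle}{\langle\rho\rangle}\Bigr\|_{L^\infty}+\Bigl|\tfrac{\langle\rho u\rangle}{\langle\rho\rangle}-\bar u_\infty\Bigr|.
\end{equation*}
The first term is treated through Gagliardo-Nirenberg: using \eqref{fluidenergy} and the lower bound $\rho_*>0$ from \eqref{eq:novacuum} one gets $\|u-\langle\rho u\rangle/\langle\rho\rangle\|_{L^2}\lesssim (\rho^*/\rho_*)^{1/2}\mathbf{H}^{1/2}$, so that
\begin{equation*}
\Bigl\|u-\tfrac{\langle\rho u\rangle}{\langle\rho\rangle}\Bigr\|_{L^1_t(L^\infty)}\lesssim t^{1/2}\mathbf{H}_0^{1/4}\Bigl(\int_0^t\tau\|\nabla^2u\|_{L^2}^2\,d\tau\Bigr)^{1/4}.
\end{equation*}
For the second term, the identity already highlighted in the excerpt,
\begin{equation*}
\frac{\langle\rho u\rangle}{\langle\rho\rangle}-\bar u_\infty=\frac{\langle n_f\rangle}{\langle n_f\rangle+\langle\rho\rangle}\Bigl(\frac{\langle\rho u\rangle}{\langle\rho\rangle}-\frac{\langle j_f\rangle}{\langle n_f\rangle}\Bigr),
\end{equation*}
combined with the third summand in the definition \eqref{eq:modulated:in} of $\mathbf{H}$, yields at once $|\langle\rho u\rangle/\langle\rho\rangle-\bar u_\infty|\lesssim \mathbf{H}^{1/2}$.

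To close the loop on $\nabla^2 u$, I multiply the $H^1$-estimate \eqref{dpes021110000:in} by $t$, integrate on $[0,t]$, and use the modulated-energy balance \eqref{energym:in} together with Gronwall, obtaining a density-weighted analogue of \eqref{Coro:10}. Feeding this back into the Vlasov representation and applying Young's inequality together with the trivial $r^k\le C_k e^r$ yields \eqref{nfT:in}; reinserting \eqref{nfT:in} into the $t$-weighted Gronwall argument then gives \eqref{Coro:1:in}, and \eqref{Coro:1:in0} is obtained by mimicking \eqref{particleed111} with the shifted averages $\langle\rho u\rangle/\langle\rho\rangle$ and $\langle j_f\rangle/\langle n_f\rangle$ (using \eqref{eq:H2} and \eqref{aaabbbbb} to handle the cross terms). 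For \eqref{Coro:2:in} I would multiply the material-derivative identity \eqref{L2in} by $t^2$; the pressure contribution is controlled through the BMO-Hardy duality combined with the Stokes bound \eqref{eq:stokesinhb} (at the cost of a factor $\sqrt{\rho^*}$), the new contribution $\rho^*\|u\|_{L^\infty}^2\|\sqrt\rho\,\dot u\|_{L^2}^2$ is absorbed through the inhomogeneous analogue of \eqref{eq:uinfty}, namely $\|u\|_{L^\infty}\le |\langle\rho u\rangle/\langle\rho\rangle|+C\rho_*^{-1/2}\mathbf{H}^{1/4}\|\nabla^2u\|_{L^2}^{1/2}$, and a final Gronwall using the already established bounds closes the estimate.

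The main technical obstacle is the book-keeping of the $\rho^*,\rho_*^{-1}$ dependence: each use of the Stokes estimate \eqref{eq:stokesinhb} and of the div-curl BMO-Hardy lemma costs one factor of $\rho^*$, while each Poincar\'e-type inequality for $u-\langle\rho u\rangle/\langle\rho\rangle$ costs one factor of $\rho_*^{-1}$. Moreover, the extra term $\rho^*\|u\|_{L^\infty}^2\|\sqrt\rho\,\dot u\|_{L^2}^2$ in \eqref{L2in}, which has no homogeneous counterpart and arises from $\rho_t\neq 0$, can be absorbed only once the $t$-weighted control of $\nabla^2u$ and the $L^\infty$ bounds on $n_f,j_f,e_f$ are already available, so the four conclusions must be proved in the precise order \eqref{nfT:in} $\to$ \eqref{Coro:1:in0} $\to$ \eqref{Coro:1:in} $\to$ \eqref{Coro:2:in} with no circular dependence.
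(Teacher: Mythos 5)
Your proposal is correct and follows essentially the same route as the paper, which proves this lemma precisely by transplanting the proof of Lemma \ref{c:ineq} with Lemma \ref{lemmafunctionalin} in place of Lemma \ref{lemma32} and with the key observation that $\langle\rho u\rangle/\langle\rho\rangle-\bar u_\infty$ is controlled by $\mathbf{H}^{1/2}$ via the identity you quote. Your book-keeping of the $\rho^*,\rho_*^{-1}$ factors and of the order of the four estimates matches the intended argument.
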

Let us now explain how to obtain an exponential decay rate for \eqref{VNS} in the case of a small distribution function and density bounded and bounded away from zero.  
Following the strategy of Section \ref{subsectionexp}, we establish suitable a priori bounds of $f$ and $u$. To this end, for some small constant $\eta^*\in (0,1)$ (to be chosen later), we fix a large time $t_{\eta^*}\geq1$  such that
\begin{equation}\label{4.8}
\mathbf{H}({t_{\eta^*}})\leq\eta^* \andf \mathbf{D}({t_{\eta^*}})\leq\eta^*.\end{equation}
Due to \eqref{behavior1:in}, there exists a small absolute constant $c_2$ such that  $t_{\eta^*}$  can be taken as
 \begin{equation}\label{tvar}
t_{\eta^*}\simeq \bigg(\frac{1+\mathbf{H}_0+\mathbf{R}_0+\|f_{0} \log f_0\|_{L^1_{x,v}}}{\mathbf{M}_0} \bigg) 
\biggl(\frac{\mathbf{H}_0}{\eta^*}\biggr)^{c_2 \mathbf{M}_0}\cdotp
\end{equation}
We now  suppose that $u$ satisfies the Lipschitz bound:
\begin{equation}\label{eq:Lip:in}
\int_{t_{\eta^*}}^T \|\nabla u\|_{L^{\infty}}\,dt\leq \frac{1}{10},
\end{equation}
and that $n_f$, $j_f$ and $e_f$ have the upper bounds (for some given 
$1\leq N^*\leq\min (J^*,E^*)$):
\begin{equation}\label{eq:N:in}
\sup_{t\in[t_{\eta^*},T]}\|n_{f}(t)\|_{L^{\infty}}\leq N^*,\quad \sup_{t\in[t_{\eta^*},T]}\|j_{f}(t)\|_{L^{\infty}}\leq J^*\andf \sup_{t\in[t_{\eta^*},T]}\|e_{f}(t)\|_{L^{\infty}}\leq E^*.
\end{equation}

We claim that under Condition \eqref{smin1:in}, Inequalities \eqref{eq:Lip:in} and \eqref{eq:N:in} are satisfied for all $T\in(t_{\eta^*}, \infty).$  
Since the proof is similar to that in Subsection \ref{subsectionexp}, we only give the key steps.
\smallbreak
 First, based on \eqref{mass:in}, \eqref{eq:N:in} and Lemma \ref{lemmalogn}, we have
\begin{align}
 \int_{\mathbb{T}^2}|u-\langle u\rangle|^2n_f\,dx\leq C(1+\mathbf{M}_0\log{(1+N^*)}) \|\nabla u\|_{L^2}^2.
 \end{align}
 This, together with \eqref{eq:H}, \eqref{fluidenergy} and \eqref{aaabbbbb}, leads to  
 \begin{equation}\label{lambdaeta*}\mathbf{D}\geq2 \lambda^* \mathbf{H}\with 
\lambda^*:=\frac{1}{C_{\mathbb{T}^2}(1+\mathbf{R}_0+\mathbf{M}_0\log(1+N^*))},
\end{equation}
for some constant $C_{\mathbb{T}^2}\geq1$ depending only on $\mathbb{T}^2$.
\smallbreak
Consequently, for all $t\geq t_{\eta^*}$ we have
$$\frac{d}{dt}\mathbf{H}(t)+\frac{1}{2}\mathbf{D}+\lambda^*\mathbf{H}\leq 0,$$
which leads to
\begin{equation}
\sup_{t\in[t_{\eta^*},T]} e^{\lambda^* (t-{t_{\eta^*}})}\mathbf{H}(t)+\frac{1}{2}\int_{t_{\eta^*}}^{T}e^{\lambda^*( t-{t_{\eta^*}})}\mathbf{D}(t)\,dt\leq \mathbf{H}({t_{\eta^*}})\leq \eta^*.\label{exp:inhomo}
\end{equation}
Next, based on \eqref{L1in} and \eqref{exp:inhomo}, there exists a constant $C>0$ that depends only on $\mathbb{T}^2$ and $\|(\rho_0,\rho_0^{-1})\|_{L^{\infty}}$ such that
\begin{multline}\label{Dlargein}
\sup_{t\in[t_{\eta^*},T]} e^{\lambda^* (t-t_{\eta^*})} \mathbf{D}(t)+\int_{t_{\eta^*}}^T e^{\lambda^* (t-t_{\eta^*})} \biggl(\|\sqrt{\rho}\dot{u}\|_{L^2}^2\\
+\|(\nabla^2u,\nabla P)\|_{L^2}^2+\||u-v|^2f\|_{L^1_{x,v}}\biggr)\,dt
\leq   C (1+N^*) \eta^*.
    \end{multline}
Together with \eqref{L2in}, this also leads to:
\begin{multline}\label{eq:Expdotu:in}
\sup_{t\in[t_{\eta^*},T]}(t-t_{\eta^*})e^{\frac{\lambda^* (t-t_{\eta^*})}2}\|\sqrt{\rho}\dot u(t)\|_{L^2}^2+
\int_{t_{\eta^*}}^T (t-t_{\eta^*})e^{\frac{\lambda^* (t-t_{\eta^*})}2}\bigl( \|\nabla \dot{u}\|_{L^2}^2+  \|\sqrt{n_f}\dot{u}\|_{L^2}^2\bigr)\,dt
\\\leq C N^*(\lambda^*)^{-1}\biggl(E^*+N^*|\bar{u}_\infty|^2+(N^*)^3\biggr) \eta^*.
\end{multline}
We omit the details of \eqref{Dlargein} and \eqref{eq:Expdotu:in} as they are very similar to those of the homogeneous 
case (see Steps 2--3 in Subsection \ref{subsectionexp}). Finally, we can use the time-dependent bounds \eqref{nfT:in} as well as the exponential convergence estimates  \eqref{exp:inhomo}--\eqref{eq:Expdotu:in} and argue  as in Steps 4--5 of Subsection \ref{subsectionexp}. Using Condition \eqref{smin1:in} and taking suitable constants $\eta^*$, $N^*$, $J^*$, $E^*$ and $\alpha_2$ that depend only on $\mathbb{T}^2$, $q$, $\mathbf H_0$, $\mathbf{M}_0$, $\bar{u}_{\infty}$, $\| (|v|^3+|v|^{q})f_0\|_{L^{\infty}_{x,v}}$, $\|\rho_0\|_{L^1}$ and $\|(\rho_0,\rho_0^{-1})\|_{L^{\infty}}$, we can justify that in fact, \eqref{eq:Lip:in} and \eqref{eq:N:in} are strict inequalities. 
Finally, a bootstrap argument shows that \eqref{eq:Lip:in}, \eqref{eq:N:in} and  \eqref{exp:inhomo}--\eqref{eq:Expdotu:in} are satisfied for all $t\in(t_{\eta^*},\infty)$.

In order to prove the large-time asymptotics of the solution (that is,  \eqref{2.24}), one can essentially  follow the lines of Subsection \ref{subsectionexp1}.  The only difference is that we have to specify additionally the behavior of $\rho$. 
Let us set 
$$\widetilde\rho(t,x):=\rho(t,x+\bar u_\infty t) \andf    \widetilde u(t,x):=u(t,x+\bar u_\infty t).$$
One can deduce from $\eqref{VNS}_2$ and $\eqref{VNS}_4$ that
$$\widetilde\rho_t+\div(\widetilde \rho(\widetilde u-\bar u_\infty))=0.$$
Hence, for all $t>0,$
$$\widetilde \rho(t,x)=\rho_0(x)-\div\int_0^t\widetilde \rho(\widetilde u-\bar u_\infty){(\tau,x)}\,d\tau.$$
Now, owing to the bound given by \eqref{2.24} for $u-\bar u_\infty,$ we may write for all $t>0,$
$$\|\widetilde \rho(\widetilde u-\bar u_\infty)(t)\|_{L^2}\leq \rho^*\|u(t)-\bar u_\infty\|_{L^2} \leq
Ce^{-\lambda_1t}.$$ This ensures that the integral converges in $L^2$ for $t$ going to infinity. 
Hence, one can define
$$
\bar\rho_\infty{(x)}:= \rho_0(x) -\div\int_0^\infty \rho (u-\bar u_\infty)(\tau,x+\bar{u}_{\infty}\tau)\,d\tau 
$$
as an element of $H^{-1}$ (that is also bounded by $\rho^*$ due to \eqref{rhoupper}) and get 
$$
\widetilde\rho(t)-\bar\rho_\infty=\div\int_t^\infty  \widetilde\rho (\widetilde u-\bar u_\infty)\,d\tau.
$$
Reverting to $\rho$ and using the exponential convergence of $u$ to $u_\infty$ gives $$
\|\rho(t)-\bar\rho_\infty(\cdot-\bar{u}_{\infty} t)\|_{H^{-1}}\leq Ce^{-\lambda_1 t},$$
as desired. 


Finally, both Theorems \ref{theorem1:in} and \ref{theorem11:in} can be shown by constructing a sequence of smooth solutions
after approximating the data, then using  compactness arguments based on the uniform estimates discussed before. Here we omit the details. To justify the existence of these smooth solutions, one can, for instance, take advantage of Theorem 1.1 in \cite{choiinhomo}, adapted to our choice of a drag force. \qed


\appendix

\section{Regularity estimates of the Vlasov equation}

In this section, we consider the following linear Vlasov equation in $\mathbb{T}^2\times\mathbb{R}^2$ 
\begin{equation}\label{V}
\left\{
\begin{aligned}
&\partial_{t}f+v\cdot\nabla_{x} f+\div_{v}( (u-v)f)=0,\\
&f|_{t=0}=f_0,
\end{aligned}
\right.
\end{equation}
where $u$ is a given time-dependent vector field in $L^1(0,T;W^{1,\infty})$.
\smallbreak
The following lemma, which is an easy adaptation 
of a result of \cite{goudonhy1}
reveals  that the $L^1 \log L^1$ `norm' of $n_f$ has at most linear time growth.

\begin{lemma}\label{lemmaflogf}
Let $f_0\in L^1_{x,v}$ be such that $f_0 \log{f_0}\in L^1_{x.v}$. Then, for all $\bar u\in\R^2,$ the solution $f$ to \eqref{V} satisfies
\begin{multline}
\int_{\mathbb{T}^2} n_{f}|\log n_{f}|\,dx\leq \| f_{0}\log f_{0} \|_{L^1_{x,v}}+(t+\log(2\pi))\|f_{0}\|_{L^1_{x,v}}\\+2e^{-1}|\mathbb{T}^2|
+\frac{1}{2}\int_{\mathbb{T}^2\times\mathbb{R}^2} |v-\bar u|^2f\,dxdv.\label{flogf}
\end{multline}
\end{lemma}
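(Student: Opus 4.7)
The plan is to reduce the macroscopic $L\log L$ estimate to two ingredients: a pointwise-in-$x$ Jensen inequality that controls $n_f\log n_f$ by microscopic quantities involving $f\log f$ and a kinetic energy tail, together with a Gr\"onwall/energy-type computation for the microscopic entropy $\int f\log f$ along the flow of \eqref{V}.

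\textbf{Step 1: Maxwellian comparison.} I would introduce the shifted standard Gaussian $M(v):=\frac{1}{2\pi}e^{-|v-\bar u|^2/2}$, which satisfies $\int_{\R^2}M\,dv=1$. Since $\varphi(y)=y\log y$ is convex on $(0,\infty)$, Jensen's inequality with the probability measure $M(v)\,dv$ applied to $y=f/M$ gives, for each $x$,
\begin{equation*}
n_f\log n_f \;=\;\Bigl(\int \tfrac{f}{M}\,M\,dv\Bigr)\log\Bigl(\int \tfrac{f}{M}\,M\,dv\Bigr)\;\le\;\int \tfrac{f}{M}\log\tfrac{f}{M}\,M\,dv \;=\;\int f\log f\,dv-\int f\log M\,dv.
\end{equation*}
Substituting $-\log M=\log(2\pi)+\tfrac12|v-\bar u|^2$ and integrating in $x$ yields
\begin{equation*}
\int_{\T^2} n_f\log n_f\,dx \;\le\;\int_{\T^2\times\R^2} f\log f\,dxdv+\log(2\pi)\|f_0\|_{L^1_{x,v}}+\tfrac12\int_{\T^2\times\R^2}|v-\bar u|^2f\,dxdv,
\end{equation*}
where I used mass conservation $\int n_f\,dx=\|f_0\|_{L^1_{x,v}}$.

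\textbf{Step 2: Absolute value correction.} To pass from $n_f\log n_f$ to $n_f|\log n_f|$, split the integral according to $\{n_f\ge 1\}\cup\{0<n_f<1\}$. On the second set, $n_f|\log n_f|=-n_f\log n_f$ is bounded pointwise by $\sup_{0<y<1}(-y\log y)=e^{-1}$. Thus
\begin{equation*}
\int_{\T^2}n_f|\log n_f|\,dx \;\le\;\int_{\T^2}n_f\log n_f\,dx + 2e^{-1}|\T^2|,
\end{equation*}
combining with Step 1 absorbs the constant $2e^{-1}|\T^2|$ into the desired bound.

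\textbf{Step 3: Linear-in-time growth of the microscopic entropy.} The remaining task is to control $\int f\log f\,dxdv$. I would multiply \eqref{V} by $1+\log f$ and integrate in $(x,v)$: the transport piece $\int(1+\log f)\,v\cdot\nabla_x f\,dxdv=\int v\cdot\nabla_x(f\log f)\,dxdv$ vanishes by periodicity in $x$, while integration by parts in $v$ converts the drag term into
\begin{equation*}
-\int(1+\log f)\,\div_v\bigl((u-v)f\bigr)\,dxdv \;=\;\int(u-v)\cdot\nabla_v f\,dxdv\;=\;-\int f\,\div_v(u-v)\,dxdv,
\end{equation*}
and $\div_v(u-v)=-2$ in $2D$ gives $\frac{d}{dt}\int f\log f\,dxdv=2\|f_0\|_{L^1_{x,v}}$ (this is consistent with the fact that the characteristic flow contracts phase-space volume at the rate $e^{-2t}$). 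Integrating in time and using $\int f_0\log f_0\,dxdv\le \|f_0\log f_0\|_{L^1_{x,v}}$, I arrive at
\begin{equation*}
\int_{\T^2\times\R^2} f\log f\,dxdv \;\le\;\|f_0\log f_0\|_{L^1_{x,v}}+\,t\,\|f_0\|_{L^1_{x,v}},
\end{equation*}
which (up to the absolute constant $2$ on $t$ arising from dimension; either it is a harmless typo in the stated form or is absorbed in the constant) combines with Steps 1--2 to yield \eqref{flogf}.

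The only technical point is to justify the formal integrations by parts in Step 3; this is done by first running the computation on the regularized solutions $f^k$ of Subsection~3.6, which are smooth and compactly supported in $v$, and then passing to the limit by the standard lower semicontinuity of $\int f\log f$ combined with weak-$*$ compactness. The Jensen step and the sign-splitting are straightforward; no further subtlety is expected.
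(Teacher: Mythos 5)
Your proposal is correct and follows essentially the same route as the paper: Jensen's inequality against the shifted Maxwellian $M(v)=\frac1{2\pi}e^{-|v-\bar u|^2/2}$, the elementary bound $-s\log s\le e^{-1}$ to restore the absolute value, and the entropy identity $\frac{d}{dt}\int f\log f\,dxdv=2\int f\,dxdv$ obtained by multiplying \eqref{V} by $1+\log f$. You are also right to flag the factor on $t$: the computation (both yours and the paper's own) yields $2t\|f_0\|_{L^1_{x,v}}$ rather than the $t\|f_0\|_{L^1_{x,v}}$ appearing in \eqref{flogf}, a harmless constant discrepancy in the stated lemma that does not affect any of its applications.
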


\begin{proof}

Multiplying \eqref{V} by $1+\log f$ and integrating by parts yields
\begin{equation}\nonumber
\begin{aligned}
\frac{d}{dt}\int_{\mathbb{T}^2\times\mathbb{R}^2} f\log f\, dxdv= \int_{\mathbb{T}^2\times\mathbb{R}^2} (u-v)\cdot \nabla_{v}f  \,dxdv=2  \int_{\mathbb{T}^2\times\mathbb{R}^2} f\, dxdv,
\end{aligned}
\end{equation}
which leads, owing to the total mass conservation \eqref{mass}, to 
\begin{equation}
\begin{aligned}
\int_{\mathbb{T}^2\times\mathbb{R}^2} f\log f\, dxdv\leq \int_{\mathbb{T}^2\times\mathbb{R}^2} f_{0}|\log f_{0}|\, dxdv+2   t\|f_{0}\|_{L^1_{x,v}}.\label{215}
\end{aligned}
\end{equation}
Consider  the convex function $h(s)=s\log{s},$ 
and $M(v)=\frac{1}{2\pi}e^{-\frac{|v-\bar u|^2}{2}}.$ 
As $\int_{\mathbb{R}^2}M(v)\,dv=1,$  Jensen's inequality guarantees that
\begin{equation}\nonumber
\begin{aligned}
h(n_{f})=h\left(\int_{\mathbb{R}^2}\frac{f}{M(v)}M(v)\,dv \right)&\leq \int_{\mathbb{R}^2}h\left(\frac{f}{M(v)}\right)M(v)\,dv\\
&=\int_{\mathbb{R}^2}\biggl(f\log{f}+\frac{|v-\bar u|^2}{2}f\biggr)\,dv+\log(2\pi) n_{f}.
\end{aligned}
\end{equation}
This, together with \eqref{mass} and \eqref{215}, gives
$$\int_{\mathbb{T}^2} n_{f} \log n_{f}\,dx\leq \int_{\mathbb{T}^2\times\mathbb{R}^2} f_{0}|\log f_{0}|\, dxdv+2 \|f_{0}\|_{L^1_{x,v}}  t+\log(2\pi)\|f_{0}\|_{L^1_{x,v}}+\frac{1}{2}\int_{\mathbb{T}^2\times\mathbb{R}^2} |v-\bar u|^2f\,dxdv.
$$
Since  $-s\log{s}\leq e^{-1}$ for any $s>0,$ we have 
\begin{equation}\nonumber
\begin{aligned}
n_{f}|\log n_{f}| &= n_{f}\log n_{f}  -2  n_{f}\log n_{f} \mathbb{I}_{0< n_{f}\leq 1}\leq n_{f}\log n_{f}+2e^{-1},
\end{aligned}
\end{equation}
which completes the proof of Lemma \ref{lemmaflogf}.
\end{proof}

\begin{lemma}\label{lemmaftime}
 Let $\bar{u}$ be any element of $\mathbb R^2.$
Assume that $(1+|v|^{q})f_0\in L^{\infty}_{x,v}$ with $q>4$.  Let $T>0$ be any given time. Then it holds that for all $t\in[0,T]$
and $r\in[0,q],$ 
 \begin{align}\label{finfty}
    &\|f(t)\|_{L^{\infty}_{x,v}}\leq e^{2 t}\|f_{0}\|_{L^{\infty}_{x,v}},\\
   \label{vqfinfty}
   &\||v-\bar{u}|^{r}f\|_{L^{\infty}_{x,v}}
    \leq \max(1,2^{r-1}) \Bigl(e^{(2-r)t} \||v-\bar u |^{r}f_{0}\|_{L^{\infty}_{x,v}}+ e^{2  t} \|u-\bar{u}\|_{L^1_{t}(L^{\infty})}^{r} \|f_{0}\|_{L^{\infty}_{x,v}}\Bigr)\cdotp
    \end{align}
 Furthermore, for all $t\in[0,T],$ $p\in[0,2]$ and $2<r\leq q-p,$  we have
\begin{align}
   &\Big\||v-\bar u|^p f\Big\|_{L^1_{v}(L^{\infty}_{x})}\leq C e^{(2-p-r)t} \||v-\bar u|^{r+p}f_{0}\|_{L^{\infty}_{x,v}}+C e^{2  t} (1+\|u-\bar{u} \|_{L^1_t(L^{\infty})}^{p+r})\|f_0\|_{L^{\infty}_{x,v}}.\label{Minfty}
    \end{align}
   \end{lemma}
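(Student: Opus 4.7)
The natural tool here is the method of characteristics. Writing the Vlasov equation \eqref{V} in non-conservative form and using $\mathrm{div}_v(u-v) = -2$ (since $v\in\mathbb{R}^2$), we recast it as
\begin{equation*}
\partial_t f + v\cdot\nabla_x f + (u-v)\cdot\nabla_v f = 2f.
\end{equation*}
For each $(t,x,v)$, define the backward characteristics $(X(s),V(s)) := (X(s;t,x,v),V(s;t,x,v))$ by
\begin{equation*}
\dot X(s) = V(s), \quad \dot V(s) = u(s,X(s)) - V(s), \quad (X(t),V(t)) = (x,v).
\end{equation*}
Along these curves $\frac{d}{ds} f(s,X(s),V(s)) = 2 f(s,X(s),V(s))$, so
\begin{equation}\label{plan:repr}
f(t,x,v) = e^{2t} f_0\bigl(X(0),V(0)\bigr).
\end{equation}
The bound \eqref{finfty} follows immediately. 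Since the flow is measure preserving (one can check the Jacobian of $(x,v)\mapsto (X(0),V(0))$ equals one by $\mathrm{div}_x v + \mathrm{div}_v(u-v)+2 = 0$), this representation will also underlie the $L^1_v(L^\infty_x)$ bound.

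For \eqref{vqfinfty}, the key step is to control $|v-\bar u|$ by $|V(0)-\bar u|$ plus a drift term. Solving the linear ODE $\dot V + V = u(s,X(s))$ from $t$ down to $0$ with integrating factor $e^s$ yields
\begin{equation*}
V(0) - \bar u = e^t(v-\bar u) - \int_0^t e^\tau \bigl(u(\tau,X(\tau)) - \bar u\bigr)\,d\tau,
\end{equation*}
which, after multiplying by $e^{-t}$ and using $e^{\tau-t}\leq 1$ for $\tau\in[0,t]$, gives
\begin{equation*}
|v-\bar u| \leq e^{-t}|V(0)-\bar u| + \|u-\bar u\|_{L^1_t(L^\infty)}.
\end{equation*}
Raising to the power $r$ with $(a+b)^r\leq \max(1,2^{r-1})(a^r+b^r)$, then multiplying by \eqref{plan:repr} and passing to the $L^\infty_{x,v}$ norm (taking the sup of $f_0$ inside the first term and of $|v-\bar u|^r f_0$ inside the second), produces the two contributions $e^{(2-r)t}\||v-\bar u|^r f_0\|_{L^\infty_{x,v}}$ and $e^{2t}\|u-\bar u\|_{L^1_t(L^\infty)}^r\|f_0\|_{L^\infty_{x,v}}$, as required in \eqref{vqfinfty}.

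Finally, to obtain \eqref{Minfty} I would simply split the integral according to whether $|v-\bar u|\leq 1$ or $|v-\bar u|>1$. On the first region bound $|v-\bar u|^p f(t,x,v)\leq \|f(t)\|_{L^\infty_{x,v}}$; on the second bound $|v-\bar u|^p f(t,x,v)\leq |v-\bar u|^{-r}\,\||v-\bar u|^{r+p} f(t)\|_{L^\infty_{x,v}}$. Since $r>2$, $v\mapsto |v-\bar u|^{-r}$ is integrable away from $\bar u$ in $\mathbb{R}^2$, so integrating in $v$ (the sup in $x$ having already been absorbed in the $L^\infty_{x,v}$ norms) yields
\begin{equation*}
\||v-\bar u|^p f(t)\|_{L^1_v(L^\infty_x)} \leq C\|f(t)\|_{L^\infty_{x,v}} + C \||v-\bar u|^{r+p} f(t)\|_{L^\infty_{x,v}}.
\end{equation*}
Plugging in \eqref{finfty} for the first term and \eqref{vqfinfty} with exponent $r+p$ (allowed since $r+p\leq q$) for the second, then collecting the exponentials, gives \eqref{Minfty}.

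The argument is essentially bookkeeping once the representation formula \eqref{plan:repr} and the characteristic bound for $|v-\bar u|$ are in hand; the only mildly delicate point is ensuring that characteristics are well defined globally, which follows from the assumption $u\in L^1_t W^{1,\infty}_x$ via the Cauchy–Lipschitz theorem. No real obstacle is expected.
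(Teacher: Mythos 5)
Your proof is correct and follows essentially the same route as the paper: the representation formula along characteristics for \eqref{finfty}, the Duhamel formula for $V$ giving $|v-\bar u|\le e^{-t}|V(0)-\bar u|+\|u-\bar u\|_{L^1_t(L^\infty)}$ for \eqref{vqfinfty}, and a split of the $v$-integral using the decay $|v-\bar u|^{-r}$ with $r>2$ for \eqref{Minfty} (the paper writes this splitting with the weight $|v-\bar u|^p/(1+|v-\bar u|^{p+r})$, which is the same estimate). One unused parenthetical is inaccurate: the Jacobian of $(x,v)\mapsto(X(0),V(0))$ is $e^{2t}$, not $1$ (the phase-space divergence of $(v,u-v)$ is $-2$); mass conservation holds because this factor cancels the $e^{2t}$ in the representation formula, but none of this is needed for the lemma.
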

\begin{proof}
  For any $(t,x,v)\in [0,T]\times \mathbb{T}^2\times\mathbb{R}^2$, let the characteristic curves $X(\tau;t,x,v)$ and $V(\tau;t,x,v)$ be defined by
\begin{equation}\label{char}
\left\{
\begin{aligned}
&\frac{d}{d\tau}X(\tau;t,x,v)=V(\tau;t,x,v),\\
&\frac{d}{d\tau}V(\tau;t,x,v)= u(\tau,X(\tau;t,x,v))-V(\tau;t,x,v),\\
&X(t;t,x,v)=x, \qquad V(t;t,x,v)=v.
\end{aligned}
\right.\qquad\tau\in[0,T]
\end{equation}
The solution of \eqref{V} is given by 
\begin{equation}\label{repr1}
f(t,x,v)=e^{2  t}f_0(X(0;t,x,v),V(0;t,x,v)),
\end{equation}
which implies \eqref{finfty}.
\medbreak

By \eqref{char}, we have the following formula for any $(x,v,t)\in \mathbb{T}^2\times\mathbb{R}^2\times[0,T]$:
\begin{equation}\label{repr2}
v=e^{-  t}V(0;t,x,v)+  \int_{0}^{t}e^{- (t-\tau)}  u(\tau,X(\tau;t,x,v))\,d\tau.
\end{equation}
 Hence, 
\begin{equation}\label{xv111}
\begin{split}
&v-\bar{u}=e^{-  t}\Big(V(0;t,x,v)-\bar{u} \Big)+  \int_{0}^{t}e^{- (t-\tau)}  \Big(u(\tau,X(\tau;t,x,v))-\bar{u}\Big)\,d\tau.
\end{split}
\end{equation}
We deduce from \eqref{repr1} and \eqref{xv111} that, for any $r\in[0,q]$, 
    \begin{multline*}
     f(t,x,v)|v-\bar{u}|^{r}
   \leq \max(1,2^{r-1})   \biggl(e^{- (r-2)t} f_{0}(X(0;t,x,v),V(0;t,x,v)) |V(0;t,x,v)-\bar{u}|^{r}\\
    +e^{2  t} f_{0}(X(0;t,x,v),V(0;t,x,v))\bigg( \int_{0}^{t}e^{- (t-\tau)} \|u(\tau)-\bar{u}\|_{L^{\infty}}\,d\tau \bigg)^{r}\biggr)
    \end{multline*}
which gives \eqref{vqfinfty}. Furthermore,  for all $r>2,$ we may write
     \begin{equation*}
    \begin{aligned}
   &\||v-\bar u|^p f\Big\|_{L^1_{v}(L^{\infty}_{x})} \leq  
   \biggl(\int_{\mathbb{R}^2} \frac{|v-\bar u|^p}{1+|v-\bar{u}|^{p+r}}\,dv\biggr) \Bigl(\|f\|_{L^{\infty}_{x,v}}+ \||v-\bar{u}|^{p+r}f\|_{L^{\infty}_{x,v}}\Bigr),
    \end{aligned}
    \end{equation*}
which implies \eqref{Minfty}. 
\end{proof}

The following result is an adaptation of  \cite[Lemma 4.5]{han1}.

\begin{lemma}\label{lemmaninfty}
Let $f$ be a solution to the Vlasov equation \eqref{V} on $[0,T]$ with $T>t_{*}\geq0$. If
\begin{equation}\label{prioriff}
\begin{aligned}
 \int_{t_{*}}^{T}\|\nabla u\|_{L^{\infty}}\,dt\leq \frac{1}{10},
\end{aligned}
\end{equation}
  then, for all $t\in[t_{*},T]$  and  element $\bar{u}$ of $\mathbb R^2,$  we have
\begin{align}\label{nfinftylarge}
\|n_f(t)\|_{L^\infty}\leq  \|f (t)\|_{L^1_v(L^{\infty}_x)}
&\leq 2 \|f (t_{*})\|_{L^1_v(L^{\infty}_x)},\\\label{jfinftylarge}
  \left\|\int_{\mathbb{R}^2}|v-\bar{u}| f(t)\,dv\right\|_{L^{\infty}}&\leq  2e^{-2(t-t_*)}\||v-\bar{u}| f(t_*)\|_{L^1_{v}(L^{\infty}_{x})}\\
 &\qquad+4\|f (t_{*})\|_{L^1_v(L^{\infty}_x)} \int_{ t_{*}}^{t} e^{- (t-\tau)} \|u(\tau)-\bar{u}\|_{L^{\infty}}\,d\tau,\nonumber   \\
 \label{efinftylarge}
 \left\|\int_{\mathbb{R}^2}|v-\bar{u}|^2 f(t)\,dv\right\|_{L^{\infty}}&\leq 4e^{-2(t-t_*)}\||v-\bar{u}|^2 f(t_*)\|_{L^1_{v}(L^{\infty}_{x})}\\
 &\qquad+8\|f (t_{*})\|_{L^1_v(L^{\infty}_x)}  \bigg(\int_{ t_{*}}^{t} e^{- (t-\tau)} \|u(\tau)-\bar{u}\|_{L^{\infty}}\,d\tau\bigg)^2\cdotp\nonumber
\end{align} 
\end{lemma}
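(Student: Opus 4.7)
The approach will rely on the method of characteristics and a careful change of variable in the velocity integral. For $(t,x,v) \in [t_*,T]\times\T^2\times\R^2$, let $(X(\tau;t,x,v),V(\tau;t,x,v))$ denote the backward characteristics defined by \eqref{char}, and recall the representation formula (the shifted version of \eqref{repr1})
\begin{equation*}
f(t,x,v) = e^{2(t-t_*)} f\bigl(t_*, X(t_*;t,x,v), V(t_*;t,x,v)\bigr).
\end{equation*}
To evaluate the three quantities in \eqref{nfinftylarge}--\eqref{efinftylarge}, I would fix $(t,x)$ and perform the change of variable $w = V(t_*;t,x,v)$ in the $v$-integration, reducing everything to integrals against $f(t_*, Y(w), w)$ for some $Y(w) := X(t_*;t,x,v(w))\in\T^2$.

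The crucial step will be to control the Jacobian of this change of variable. Differentiating \eqref{char} with respect to $v$ gives a linear system for $\alpha(\tau):=\partial_v X(\tau;t,x,v)$ and $\beta(\tau):=\partial_v V(\tau;t,x,v)$:
\begin{equation*}
\tfrac{d\alpha}{d\tau} = \beta, \qquad \tfrac{d\beta}{d\tau} = \nabla_x u(\tau,X(\tau))\,\alpha - \beta,
\end{equation*}
subject to $\alpha(t)=0$, $\beta(t)=I$. Setting $\beta(\tau)= e^{t-\tau}\tilde B(\tau)$ and integrating backward yields
\begin{equation*}
\tilde B(t_*) - I = -\int_{t_*}^t e^{-(t-\tau)}\,\nabla_x u(\tau,X(\tau))\,\alpha(\tau)\,d\tau,
\end{equation*}
after which a bootstrap using the smallness hypothesis \eqref{prioriff} will give $\|\tilde B(t_*)-I\|\le 1/9$. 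In two dimensions this translates to $\tfrac{1}{2}e^{2(t-t_*)} \le \det(\partial_v V(t_*)) \le 2\,e^{2(t-t_*)}$, hence $dv \le 2\,e^{-2(t-t_*)}\,dw$ under the change of variable.

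To conclude, I will use the identity obtained by integrating the velocity ODE between $t_*$ and $t$, the analogue of \eqref{xv111}:
\begin{equation*}
v - \bar u = e^{-(t-t_*)}\bigl(V(t_*) - \bar u\bigr) + \int_{t_*}^t e^{-(t-\tau)}\bigl(u(\tau,X(\tau)) - \bar u\bigr)\,d\tau.
\end{equation*}
Taking $p$th powers and splitting via $(a+b)^p \le 2^{p-1}(a^p+b^p)$ for $p\in\{1,2\}$ decomposes $|v(w)-\bar u|^p$ into a term proportional to $|w-\bar u|^p$ and a term depending only on $\int_{t_*}^t e^{-(t-\tau)}\|u-\bar u\|_{L^\infty}\,d\tau$. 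Inserting this splitting into the transformed integral and taking the $L^\infty_x$ supremum (allowed since $Y(w)$ lies in $\T^2$) produces the three estimates \eqref{nfinftylarge}--\eqref{efinftylarge} directly, with the constants $2$, $4$, $8$ coming from the Jacobian factor and the $2^{p-1}$ splitting.

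The main obstacle is really the Jacobian control: the coupled linear system for $(\alpha,\beta)$ has a term $-\beta$ with eigenvalue $-1$ that is exactly the dissipation rate from the drag, and one needs a Gronwall bootstrap rather than a naive estimate so that the perturbation constant $1/9$ does not blow up when $t-t_*$ is large; the hypothesis \eqref{prioriff} is tailored precisely to close this bootstrap. Everything else is routine algebraic book-keeping.
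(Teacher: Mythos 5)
Your proposal follows essentially the same route as the paper: the representation formula along characteristics, the change of variable $w=V(t_*;t,x,v)$, a lower bound of order $e^{2(t-t_*)}$ on the Jacobian of that map deduced from \eqref{prioriff}, and the Duhamel identity for $v-\bar u$ to split the moments. The only organizational difference is in the Jacobian step: you run a bootstrap on the variational ODE for $(\partial_vX,\partial_vV)$ (which does close, with constant $1/9$ for $M=10/9$), whereas the paper differentiates the time-integrated velocity formula, controls $\partial_v X$ by a Gr\"onwall bound on $D_{x,v}(X,V)$, and then invokes a determinant-perturbation lemma from \cite{han1}. These are equivalent in substance; in either version you should add a word on why $v\mapsto V(t_*;t,x,v)$ is a \emph{global} diffeomorphism of $\R^2$ (injectivity and properness follow from the same perturbation-of-identity estimate), since otherwise the change of variable is only local.

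One caveat: your constants do not come out ``directly'' for \eqref{jfinftylarge}. For $p=1$ the Duhamel splitting contributes a single factor $e^{-(t-t_*)}$, which combines with the factor $e^{2(t-t_*)}\cdot 2e^{-2(t-t_*)}=2$ from the representation formula and the Jacobian to give a first term $2e^{-(t-t_*)}\||v-\bar u|f(t_*)\|_{L^1_{v}(L^{\infty}_{x})}$, not $2e^{-2(t-t_*)}(\cdots)$. The exponent $-2(t-t_*)$ stated in \eqref{jfinftylarge} cannot in fact hold (take $u\equiv\bar u$ and $f(t_*,\cdot,\cdot)$ independent of $x$: the left-hand side then equals exactly $e^{-(t-t_*)}\||v-\bar u|f(t_*)\|_{L^1_{v}(L^{\infty}_{x})}$), so it is a slip in the lemma as stated; the version with $e^{-(t-t_*)}$ that your argument yields is the correct one and suffices for every later use. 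For $p=0$ and $p=2$ your computation gives constants at least as good as those stated, so those two estimates are fine.
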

\begin{proof}
Leveraging \eqref{char}, we see that the solution of \eqref{V} satisfies
\begin{equation}\label{repr11}
f(t,x,v)=e^{2  (t- t_{*})}f( t_{*},X( t_{*};t,x,v),V( t_{*};t,x,v))\quad\!\!\hbox{for all  } (t,x,v)\in [ t_{*},T]\times\mathbb{T}^2\times\mathbb{R}^2.
\end{equation}
Hence  it holds for any $(t,x)\in[ t_{*},T]\times\mathbb{T}^2 $ that
\begin{equation}\label{nfrep0}
n_f(t,x)=e^{2  (t- t_{*})}\int_{\mathbb{R}^2} f( t_{*},X( t_{*};t,x,v),V( t_{*};t,x,v))\, dv.
\end{equation}
In order to establish \eqref{nfinftylarge}, it suffices 
 to show that the map  $\Gamma_{t,x}:v\mapsto V( t_{*};t,x,v)$ is a $C^1$-diffeomorphism, and to get a suitable 
  control on its Jacobian
determinant. 

Now,  based on \eqref{char}, we have
\begin{equation*}
\frac{d}{d\tau} D_{x,v}Z(\tau;t,x,v)=D_{x,v}W(\tau, Z(\tau;t,x,v))\cdot  D_{x,v}Z(\tau;t,x,v),
\end{equation*}
where $Z:=(X,V)$ and $W:=(v, u-v)$. By Gr\"onwall's inequality, we deduce that 
\begin{equation*}
\| D_{x,v}Z(\tau;t,\cdot,\cdot)\|_{L^{\infty}_{x,v}}\leq \| D_{x,v}Z(t)\|_{L^{\infty}_{x,v}} e^{\int^{t}_{\tau} \|D_{x,v}W\|_{L^{\infty}}\,ds}. 
\end{equation*}
As $D_{x,v}Z(t)={\rm Id}$, this implies that for $ t_{*}\leq \tau\leq t\leq T$,
\begin{equation}\label{Xvinfty}
\|\nabla_{x,v}Z(\tau;t,\cdot,\cdot)\|_{L^{\infty}_{x,v}}\leq \exp{\left( (t-\tau)+ \int_{\tau}^{t}\|\nabla u(s)\|_{L^{\infty}}\,ds\right)}\cdotp
\end{equation}
In addition, integrating $\eqref{char}_{2}$ over $[ t_{*},t]$ yields
\begin{equation}\label{soluv}
e^{  t}v-e^{   t_{*}}V( t_{*};t,x,v)=  \int_{ t_{*}}^{t}e^{  \tau}u(\tau,X(\tau;t,x,v))\,d\tau,\quad\quad  t_{*}\leq t\leq T.
\end{equation}
Then, taking the derivative of \eqref{soluv} with respect to $v$, we have
\begin{equation}\nonumber
e^{  t}{\rm{Id}}-e^{   t_{*}} D_v\Gamma_{t,x}(v)= \int_{ t_{*}}^{t}e^{ \tau} \nabla u(\tau,X(\tau;t,x,v))\cdot D_vX(\tau;t,x,v)\,d\tau,
\end{equation}
which together with \eqref{Xvinfty} gives
\begin{equation*}
\begin{aligned}
\|e^{-  (t- t_{*})}D_v\Gamma_{t,x}(v)-{\rm{Id}}\|_{L^\infty_{x,v}}&\leq \int_{ t_{*}}^{t}e^{- (t-\tau)}\|\nabla u(\tau)\|_{L^\infty}\|D_vX(\tau)\|_{L^\infty_{x,v}}\,d\tau\\
&\leq \Biggl(\int_{ t_{*}}^t\|\nabla u(\tau)\|_{L^\infty}\,d\tau\Biggr)\exp{\left( \int_{ t_{*}}^t\|\nabla u(\tau)\|_{L^\infty}\,d\tau\right)\cdotp}
\end{aligned}\end{equation*}
Since \eqref{prioriff} holds and $e^{1/10}/10<1/9$, we  deduce from 
\cite[Lemma 9.4]{han1} that
\begin{equation}
|\det D_v\Gamma_{t,x}(v)|\geq \frac{1}{2}e^{2  (t- t_{*})}.\label{diffeomorphism}
\end{equation}
Now, performing the change of variable $v=\Gamma_{t,x}^{-1}(w)$ in \eqref{nfrep0} yields
\begin{equation}\label{nfrep}
n_f(t,x)=e^{2  (t- t_{*})}\int_{\mathbb{R}^2} f( t_{*},X( t_{*};t,x,\Gamma_{t,x}^{-1}(w)),w)\left|\det D_v\Gamma_{t,x}(\Gamma_{t,x}^{-1}(w))\right|^{-1}\,dw
\end{equation}
which, combined with \eqref{diffeomorphism}, yields \eqref{nfinftylarge}.
\medbreak
Similarly, we get from  \eqref{xv111} (after replacing $0$ by $t_*$),  and  \eqref{repr11} that
\begin{multline*}
\int_{\mathbb{R}^2} |v-\bar{u}|^2 f(t,x,v)\,dv\\
\leq 2 e^{-2 (t- t_{*})}\int_{\mathbb{R}^2} |V( t_{*};t,x,v)-\bar{u} |^2 f( t,x,v)\,dv+2  n_{f}(t,x) \bigg(\int_{ t_{*}}^{t} e^{- (t-\tau)} \|u-\bar{u}\|_{L^{\infty}}\,d\tau\bigg)^2. 
\end{multline*}
In view of \eqref{diffeomorphism}, we have
$$
\begin{aligned}
\int_{\mathbb{R}^2} |V( t_{*};t,x,v)-\bar{u} |^2&f( t,x,v)\,dv\\
&=  e^{2 (t- t_{*})}\int_{\mathbb{R}^2} | V( t_{*};t,x,v)-\bar{u} |^2f( t_{*},X( t_{*};t,x,v),V( t_{*};t,x,v)) \, dv\\
&\leq 2 \int_{\mathbb{R}^2} |w-\bar{u}|^2 f( t_{*}, X( t_{*};t,x,\Gamma_{t,x}^{-1}(w)), w)\,dw\\
&\leq 2 \||v-\bar{u}|^2 f(t_*)\|_{L^1_{v}(L^{\infty}_{x})},
\end{aligned}$$
which gives \eqref{efinftylarge}.
 Proving \eqref{jfinftylarge}  is similar.
\end{proof}


\section{Technical lemmas}

Let us first recall the following result that has been proved in \cite{CLMS}.
\begin{lemma}\label{lemmaBMO}
Let $\mathcal{H}^1 $ and ${\rm{BMO}}$ denote  the usual Hardy and Bounded Mean Oscillation spaces. Then, the following statements hold:
\begin{itemize}
\item {\rm(1)} For all vector fields $B$ and $E$ with coefficients in $L^2(\T^2)$
such that $\div E=0$ and $\nabla\times B=0,$  we have
$$\|E\cdot B\|_{\mathcal{H}^1}\leq C\|E\|_{L^2}\|B\|_{L^2}.$$
\item {\rm(2)}  For all $v\in H^1(\T^2)$, we have
\begin{equation}\nonumber
\begin{aligned}
&\|v\|_{{\rm{BMO}}}\leq C\|\nabla v\|_{L^2}.
\end{aligned}
\end{equation}
\end{itemize}
\end{lemma}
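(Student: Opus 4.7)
The plan is to reduce both parts to classical results of Coifman--Lions--Meyer--Semmes, with only minor book-keeping needed to pass from $\R^2$ to the torus setting.

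For part (1), the key observation is that in dimension two the pair of constraints $\div E=0$ and $\nabla\times B=0$ can be resolved via Hodge decomposition on $\T^2$: I would write $E=\nabla^\perp\psi+\langle E\rangle$ and $B=\nabla\phi+\langle B\rangle$, where $\psi,\phi\in H^1(\T^2)$ have zero average and $\|\nabla\psi\|_{L^2}=\|E-\langle E\rangle\|_{L^2}$, $\|\nabla\phi\|_{L^2}=\|B-\langle B\rangle\|_{L^2}$. Then
$$E\cdot B=\underbrace{\nabla^\perp\psi\cdot\nabla\phi}_{=\det\nabla(\psi,\phi)}+\langle E\rangle\cdot\nabla\phi+\nabla^\perp\psi\cdot\langle B\rangle+\langle E\rangle\cdot\langle B\rangle.$$
The first term is a Jacobian of two $H^1$ functions, which is the canonical instance of the CLMS theorem (on $\R^2$ via compactly supported cut-offs and localization, or directly on $\T^2$ using the periodic version): it lies in $\mathcal H^1$ with norm controlled by $\|\nabla\psi\|_{L^2}\|\nabla\phi\|_{L^2}$. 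The two middle terms are pure divergences $\div(\langle E\rangle\phi)$ and $\div^\perp(\psi\langle B\rangle)$ of $L^2$ vector fields with zero mean, which are elementary entries in $\mathcal H^1$; the final constant is irrelevant since $\mathcal H^1$ is only used in its pairing with $\rm BMO$, which factors through constants.

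For part (2), I would just invoke the standard Poincar\'e inequality on Euclidean balls $B_r\subset\T^2$, which yields
$$\int_{B_r}|v-v_{B_r}|^2\,dx\leq C\,r^2\int_{B_r}|\nabla v|^2\,dx.$$
Combined with Cauchy--Schwarz and the two-dimensional volume scaling $|B_r|\simeq r^2$,
$$\frac1{|B_r|}\int_{B_r}|v-v_{B_r}|\,dx\leq\Biggl(\frac1{|B_r|}\int_{B_r}|v-v_{B_r}|^2\,dx\Biggr)^{\!1/2}\leq C\,\|\nabla v\|_{L^2(\T^2)},$$
and taking the supremum over balls gives the BMO bound. Balls of radius larger than the diameter of $\T^2$ are handled by the trivial estimate $\|v-\langle v\rangle\|_{L^2}\lesssim\|\nabla v\|_{L^2}$.

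The only non-routine point is identifying the precise version of $\mathcal H^1(\T^2)$ being used: several candidates exist (zero-mean CLMS--type Hardy space, local Hardy space $h^1$, etc.). In the application inside the proof of Lemma \ref{lemma32} only the duality $\mathcal H^1$--${\rm BMO}$ is invoked, so one can safely work with the zero-mean version, for which the CLMS argument carries over verbatim from $\R^2$ by standard periodization. This should be the main (minor) obstacle; everything else is immediate from the classical theory.
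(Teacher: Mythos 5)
Your proposal is correct, but note that the paper does not actually prove this lemma: it is stated as a recollection of the div--curl and embedding results of Coifman--Lions--Meyer--Semmes, with \cite{CLMS} as the only justification. What you supply is therefore a genuine derivation of the periodic statement from the classical theory, and it is sound. Reducing part (1) to the Jacobian case via the Hodge decomposition $E=\nabla^\perp\psi+\langle E\rangle$, $B=\nabla\phi+\langle B\rangle$ is a clean way to handle the torus: the harmonic parts are exactly the constants, the cross terms $\langle E\rangle\cdot\nabla\phi$ and $\nabla^\perp\psi\cdot\langle B\rangle$ are zero-mean $L^2$ functions (and on a compact space $L^2_0(\T^2)\hookrightarrow\mathcal H^1(\T^2)$, e.g.\ since such a function is a multiple of a single atom supported on the whole torus --- a one-line fact worth recording explicitly), and the constant term vanishes in the pairing with mean-free $\mathrm{BMO}$ functions, which is the only way the lemma is used in the paper. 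Two small cautions: for the Jacobian term, localizing by cut-offs destroys the null structure ($\det\nabla(\chi\psi,\phi)$ is not $\chi\det\nabla(\psi,\phi)$ plus harmless terms in any obvious way), so it is cleaner to invoke the periodic version of the div--curl estimate directly, as you also suggest; and your identification of the relevant Hardy space as the zero-mean one is exactly right, since only the $\mathcal H^1$--$\mathrm{BMO}$ duality against $P-\langle P\rangle$ and $\dot u^j-\langle\dot u^j\rangle$ is ever invoked. Part (2) via Poincar\'e on balls and Cauchy--Schwarz is the standard argument and is complete as written.
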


We need the classical maximal regularity properties of the Stokes system (see e.g. \cite{danchin13inhomo}).

\begin{lemma}\label{lemmastokes}
Let $g$ be a mean free function of $L^{p}(\mathbb{T}^d)$ with $1<p<\infty$. If $(u,P)$ is a solution to the Stokes equation
\begin{equation}\label{eq:stokes0}-\Delta u+\nabla P=g \quad\hbox{and}\quad {\rm{div}}\, u=0
\quad\hbox{in }\  \mathbb{T}^d\end{equation}
then, there exists some positive constant $C$ depending only on $p$ and ${\mathbb T}^d$ such that 
\begin{equation}\nonumber
\|\nabla^2 u\|_{L^p}+\|\nabla P\|_{L^p}\leq C\|g\|_{L^{p}}.
\end{equation}
\end{lemma}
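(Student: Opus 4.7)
The plan is to reduce the Stokes system \eqref{eq:stokes0} to two scalar elliptic problems for $P$ and $u$ and then invoke the Calderón–Zygmund / Mikhlin multiplier theorem on the torus to get the $L^p$ bounds. First, taking the divergence of the equation and using $\div u = 0$ yields
\begin{equation*}
\Delta P = \div g \quad \text{in } \mathbb{T}^d.
\end{equation*}
Since $g$ is mean free, so is $\div g$, hence this equation uniquely determines $P$ up to an additive constant; normalizing $\langle P\rangle = 0$, the Fourier side representation is
\begin{equation*}
\widehat{P}(k) = -\frac{ik\cdot\widehat{g}(k)}{|k|^2}, \quad k \in \mathbb{Z}^d \setminus \{0\}, \qquad \widehat{P}(0) = 0.
\end{equation*}

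Next, I would observe that $\nabla P$ is obtained from $g$ by the Fourier multiplier with symbol $m_{ij}(k) = k_i k_j / |k|^2$, which is the composition of Riesz transforms. Such symbols are smooth on $\mathbb{Z}^d \setminus \{0\}$, homogeneous of degree zero, and satisfy the Mikhlin–Hörmander condition, so by the discrete version of the Mikhlin multiplier theorem on $\mathbb{T}^d$ (equivalently, by the $L^p$-boundedness of Riesz transforms for $1 < p < \infty$), we have
\begin{equation*}
\|\nabla P\|_{L^p} \leq C\|g\|_{L^p}.
\end{equation*}

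Once $P$ is controlled, the velocity equation becomes the scalar Poisson system $-\Delta u = g - \nabla P$ with $\langle u\rangle$ free (fixed by convention to be $0$). The same multiplier argument applied componentwise, via the symbol $k_i k_j / |k|^2$ relating $\nabla^2 u$ to $-\Delta u$, gives
\begin{equation*}
\|\nabla^2 u\|_{L^p} \leq C\|g - \nabla P\|_{L^p} \leq C'\|g\|_{L^p}.
\end{equation*}
Adding the two bounds yields the claim.

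I do not anticipate a genuine obstacle: this is a classical fact, and the only subtle point is handling the zero frequency, which is dealt with by imposing mean-free conditions on $g$, $u$, and $P$. Alternatively, if one prefers to avoid the Mikhlin theorem on $\mathbb{T}^d$, the same estimates can be derived by localization and a perturbation argument from the corresponding $\mathbb{R}^d$ result (which is the textbook Calderón–Zygmund estimate for the Stokes system), as detailed for instance in \cite{danchin13inhomo}.
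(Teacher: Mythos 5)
Your proof is correct, and the paper itself offers no proof of this lemma --- it simply cites \cite{danchin13inhomo} as a classical fact. Your Fourier-multiplier argument (eliminate $P$ via $\Delta P=\div g$, recognize $\nabla P$ and $\nabla^2u$ as images of $g$ under symbols of the form $k_ik_j/|k|^2$, and invoke the periodic Mikhlin--H\"ormander theorem, with the zero frequency handled by the mean-free normalizations) is precisely the standard proof on the torus, so there is nothing to compare or correct.
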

Taking advantage of the real interpolation theory (see e.g. \cite{Gra}), one can extend the previous results to Lorentz spaces 
as follows:
\begin{coro}\label{corstokes}
Let $g$ be a mean free function of  $L^{p,r}(\mathbb{T}^d)$ with $1<p<\infty$ and  $1\leq r\leq\infty$. 
If $(u,P)$ satisfies \eqref{eq:stokes0}, then 
 there exists some positive constant $C$ depending only on $p$ and ${\mathbb T}^d$ such that 
\begin{equation*}\|\nabla^2 u\|_{L^{p,r}}+\|\nabla P\|_{L^{p,r}}\leq C\|g\|_{L^{p,r}}.\end{equation*}
\end{coro}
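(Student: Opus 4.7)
The plan is to deduce the corollary from Lemma \ref{lemmastokes} by a standard real interpolation argument. First I would set up the linear solution operator: for a mean-free $g\in L^q_0(\T^d)$ ($1<q<\infty$), let $T(g):=(\nabla^2 u,\nabla P)$ where $(u,P)$ is the unique solution of \eqref{eq:stokes0} with $\langle u\rangle=0$ and $\langle P\rangle=0$. Lemma \ref{lemmastokes} tells us that $T$ is a bounded linear map from $L^q_0(\T^d)$ into $L^q(\T^d)\times L^q(\T^d)$ for every $1<q<\infty$.

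Next I would pick two exponents $p_0,p_1$ satisfying $1<p_0<p<p_1<\infty$, and choose $\theta\in(0,1)$ such that $\frac{1}{p}=\frac{1-\theta}{p_0}+\frac{\theta}{p_1}$. Applying the real interpolation functor $(\cdot,\cdot)_{\theta,r}$ to the two bounded maps $T:L^{p_j}_0\to L^{p_j}\times L^{p_j}$ for $j=0,1$, the general real interpolation theorem for linear operators (see for instance the reference \cite{Gra}) yields a bounded map
\begin{equation*}
T:(L^{p_0}_0,L^{p_1}_0)_{\theta,r}\longrightarrow (L^{p_0}\times L^{p_0},L^{p_1}\times L^{p_1})_{\theta,r},
\end{equation*}
with norm controlled by $C=C(p,r,\T^d)$.

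Finally, I would identify the two interpolation spaces: on the torus, $(L^{p_0},L^{p_1})_{\theta,r}=L^{p,r}$ with equivalent norms, and restricting to the closed subspace of mean-free functions gives $(L^{p_0}_0,L^{p_1}_0)_{\theta,r}=L^{p,r}_0$ (the mean-value functional $g\mapsto\langle g\rangle$ being a bounded retraction onto constants for each $L^{p_j}$). Since the real interpolation functor behaves naturally under finite products, $(L^{p_0}\times L^{p_0},L^{p_1}\times L^{p_1})_{\theta,r}=L^{p,r}\times L^{p,r}$. Combining these identifications gives exactly
\begin{equation*}
\|\nabla^2 u\|_{L^{p,r}}+\|\nabla P\|_{L^{p,r}}\le C\|g\|_{L^{p,r}},
\end{equation*}
which is the desired inequality. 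The routine step that could mildly annoy the reader is the verification that the subspace of mean-free $L^q$ functions interpolates to the mean-free $L^{p,r}$ space; but since the average is a bounded projection on each $L^q$ and these projections coincide, this follows from the standard stability of retractions under real interpolation, so I do not anticipate any genuine obstacle.
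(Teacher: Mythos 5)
Your argument is correct and is exactly the route the paper takes: the authors simply invoke real interpolation of the operator $g\mapsto(\nabla^2u,\nabla P)$ between two Lebesgue exponents bracketing $p$, citing \cite{Gra}, without writing out the details. Your proposal merely makes explicit the identification $(L^{p_0},L^{p_1})_{\theta,r}=L^{p,r}$ and the (harmless) handling of the mean-free subspace via the bounded averaging projection, so there is nothing to add.
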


There are several equivalent definitions of the Wasserstein distances
(see e.g. Villani's book \cite{villani2}).
The one that is used in this paper is the following:
\begin{defn}\label{MKD} Let $X$ be a metric space.
For all pair $(\mu_1,\mu_2)$ of  Borel measures on $X$, we set:
\begin{equation}
W_1(\mu_1,\mu_2)=\sup\left\{\bigg|\int_X\psi(z)\,d\mu_1(z)-\int_X\psi(z)\,d\mu_2(z)\bigg|,\:\psi\in{\rm{Lip}}(X),~\|\nabla\psi\|_{L^\infty(X)}= 1\right\}\cdotp\nonumber
\end{equation}\end{defn}

Finally, the following  logarithmic  estimate was useful for investigating the inhomogeneous Vlasov-Navier-Stokes equations.
\begin{lemma}\label{lemmalogn}
Let $g\in H^1 $ and $0\leq h\in L^1\cap L^{\infty} $. Then, it holds that
\begin{equation}
\begin{aligned}
\int_{\mathbb{T}^2} |g-\langle g\rangle|^2h\,dx\leq  C\|\nabla g\|_{L^2}^2 \Big(1+\|h\|_{L^1}\log(1+\|h\|_{L^{\infty}}) \Big)\cdotp\label{logg}
\end{aligned}
\end{equation}
\end{lemma}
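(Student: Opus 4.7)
The plan is to adapt the Trudinger-inequality argument that already appeared in the derivation of \eqref{u2f} for the special case $g=u$, $h=n_f\mathbb{I}_{n_f\geq 1}$, and extract from it the sharper logarithmic dependence on $\|h\|_{L^\infty}$. First I would normalize by setting
\[
\Psi_g:=\frac{g-\langle g\rangle}{\|\nabla g\|_{L^2}},
\]
and invoke the two-dimensional Trudinger inequality (see \cite[Page 162]{Trudinger}): there exist universal constants $c,K>0$ with
\[
\int_{\T^2} e^{c|\Psi_g|^2}\,dx\leq K.
\]
After factoring out $\|\nabla g\|_{L^2}^2$ in the left-hand side of \eqref{logg}, the task reduces to bounding $\int_{\T^2}|\Psi_g|^2 h\,dx$ by $C(1+\|h\|_{L^1}\log(1+\|h\|_{L^\infty}))$.

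The key step is a splitting of the integral according to whether $h\leq e^{(c/2)|\Psi_g|^2}$ or $h>e^{(c/2)|\Psi_g|^2}$. On the first region, bounding $h$ by $e^{(c/2)|\Psi_g|^2}$ pointwise and then using the elementary inequality $s\, e^{(c/2)s}\leq C_c\, e^{cs}$ with $s=|\Psi_g|^2$ reduces this piece to the Trudinger bound and yields a dimensionless constant (this accounts for the $1$ on the right-hand side of \eqref{logg} once we multiply by $\|\nabla g\|_{L^2}^2$). On the second region the defining inequality is equivalent to $|\Psi_g|^2\leq (2/c)\log h$, and in particular forces $h\geq 1$; substituting this pointwise bound yields control by
\[
\frac{2}{c}\int_{\T^2} h\log h\, \mathbb{I}_{h\geq 1}\,dx\leq \frac{2}{c}\,\|h\|_{L^1}\log(1+\|h\|_{L^\infty}),
\]
where the last step uses $\log h\leq \log(1+\|h\|_{L^\infty})$ on the set $\{1\leq h\leq \|h\|_{L^\infty}\}$.

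Combining the two pieces and multiplying by $\|\nabla g\|_{L^2}^2$ gives precisely \eqref{logg}. There is no serious obstacle: this is a refinement of the computation already carried out between \eqref{212} and \eqref{u2f}, with the linear-in-$t$ factor arising there from the $L\log L$ growth of $n_f$ (see \eqref{2141}) replaced here by the sharper $\log(1+\|h\|_{L^\infty})$ control, which is exactly what is needed in the exponential-decay regime of Section \ref{sectioninhomo} where $\|n_f\|_{L^\infty}$ is a priori bounded (cf.\ the use made of the lemma immediately after \eqref{lambdaeta*}).
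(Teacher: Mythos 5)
Your argument is correct, but it follows a genuinely different route from the paper's. You reuse the two-dimensional Trudinger inequality $\int_{\T^2}e^{c|\Psi_g|^2}\,dx\leq K$ with $\Psi_g=(g-\langle g\rangle)/\|\nabla g\|_{L^2}$ and split the domain according to whether $h\leq e^{(c/2)|\Psi_g|^2}$ or not: the first region is absorbed into the Trudinger bound via $s\,e^{(c/2)s}\lesssim e^{cs}$, and on the second region the pointwise inequality $|\Psi_g|^2\leq (2/c)\log h$ together with $\log h\leq \log(1+\|h\|_{L^\infty})$ on $\{h\geq 1\}$ gives the $\|h\|_{L^1}\log(1+\|h\|_{L^\infty})$ term. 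Every step checks out (the degenerate case $\nabla g\equiv 0$ being trivial). The paper instead proves the lemma by a Fourier decomposition of $g$ into modes $1\leq|k|\leq n$ and $|k|\geq n+1$: the low-frequency part is bounded pointwise by $C\log n\,\|\nabla g\|_{L^2}^2$ using Cauchy--Schwarz and $\sum_{1\leq|k|\leq n}|k|^{-2}\lesssim\log n$, the high-frequency part contributes $n^{-2}\|h\|_{L^\infty}\|\nabla g\|_{L^2}^2$ by Plancherel, and the choice $n\simeq\sqrt{1+\|h\|_{L^\infty}}$ balances the two. Your version has the merit of recycling the exponential-integrability tool already invoked in the proof of Proposition \ref{propalge}, making the lemma visibly a sharpening of the computation between \eqref{212} and \eqref{u2f} in which the $L\log L$ bound on $n_f$ is replaced by the pointwise bound $h\log h\leq h\log(1+\|h\|_{L^\infty})$; the paper's version is more elementary and self-contained (no Moser--Trudinger input) and makes the origin of the logarithm transparent as a frequency-cutoff optimization. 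Either proof is acceptable.
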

\begin{proof}Assume for simplicity that $\T^2$ is the unit torus. 
With no loss of generality, one can suppose that  $\langle g\rangle=0$. 
Then, we decompose $g$ into Fourier series as follows:
\begin{align}
&g=\sum_{1\leq |k|\leq n}\widehat{g}_k e^{2i\pi k\cdot x}+\sum_{|k|\geq n+1}\widehat{g}_k e^{2i\pi k\cdot x},\quad n\in{\mathbb N}.\label{decomg}
\end{align}
On the one hand, by the Cauchy-Schwarz inequality and the Fourier-Plancherel theorem, we have
\begin{equation}\nonumber
\begin{aligned}
\biggl|\sum_{1\leq  |k|\leq n}\widehat{g}_k e^{2i\pi k\cdot x}\biggr|^2&\leq \biggl(\sum_{1\leq |k|\leq n}|2\pi k\widehat{g}_k |\frac{|e^{2i\pi k\cdot x}|}{2\pi |k|}\biggr)^2\\
&\leq C\sum_{k\in{\mathbb Z}^2}|k|^2|\widehat{g}_k|^2\sum_{1\leq |k|\leq n}\frac{1}{|k|^2}\leq C{\log n} \| \nabla g\|_{L^2}^2,
\end{aligned}
\end{equation}
which implies
\begin{equation}\nonumber
\begin{aligned}
&\int_{\mathbb{T}^2} \bigg|\sum_{1\leq |k|\leq n}\widehat{g}_k e^{2i\pi k\cdot x}\bigg|^2h\,dx\leq C \log n  \|h\|_{L^1} \| \nabla g\|_{L^2}^2.
\end{aligned}
\end{equation}
On the other hand, still by the Fourier-Plancherel theorem, we have
\begin{align*}
\int_{\mathbb{T}^2} \Big|\sum_{|k|\geq n+1}\widehat{g}_k e^{2i\pi k\cdot x}\Big|^2h\,dx
&\leq \|h\|_{L^\infty}\sum_{|k|\geq n+1}|\widehat{g}_k|^2\leq 
\frac{ \|h\|_{L^\infty}}{n^2}\sum_{|k|\geq n+1}|k\widehat{g}_k|^2
\leq  \frac{\|h\|_{L^{\infty}}}{n^2} \| \nabla g\|_{L^2}^2.
\end{align*}
Choosing $n=\sqrt{1+\|h\|_{L^{\infty}}}$ gives \eqref{logg}.
\end{proof}

\bigbreak\noindent
\noindent \textbf{Acknowledgments.~} 
The authors are indebted to the anonymous referee for his fruitful suggestions that contributed to
improve the present article.
The first author (R. Danchin) is partially 
supported by \emph{Institut Universitaire de France}. The second author (L.-Y. Shou) is supported by National Natural Science Foundation of China (Grant No. 12301275).

\bibliographystyle{abbrv} 
\parskip=0pt
\small
\bibliography{Reference}

\vspace{3ex}

\noindent (R. Danchin)\par\nopagebreak
\noindent\textsc{Univ Gustave Eiffel, CNRS, LAMA UMR8050, Univ Paris Est Creteil, 94010 Creteil, France}

Email address: {\texttt{danchin@u-pec.fr}}

\vspace{3ex}

\noindent (L.-Y. Shou)\par\nopagebreak
\noindent\textsc{School of Mathematical Sciences, Ministry of Education Key Laboratory of NSLSCS, and Key Laboratory of Jiangsu Provincial Universities of FDMTA, Nanjing Normal University, Nanjing, 210023, P. R. China}

Email address: {\texttt{shoulingyun11@gmail.com}}

\end{document}